\renewcommand{\mid}{:}
\newcommand\Qq{\mathbb{Q}}
\newcommand\Rr{\mathbb{R}}
\newcommand\Zz{\mathbb{Z}}
\newcommand{\fm}{{\mathfrak m}} 
\newcommand{\cO}{{\mathcal O}}
\renewcommand{\projlim}{\varprojlim}
\newcommand{\rD}{\mathrm{D}}
\newcommand{\W}{\mathrm{W}}
\newcommand{\w}{\mathrm{w}}
\newcommand\fU{\mathfrak{U}}
\renewcommand\L{\mathfrak{L}}
\newcommand{\Lrg}{\L_\mathrm{rg}}
\newcommand{\LD}{\L_\rD}
\newcommand{\Ldiv}{\L_{\mathrm{div}}}
\newcommand{\Ldivpi}{\L_{\mathrm{div},\varpi}}
\newcommand{\Val}{\mathcal{O}}
\newcommand{\MVF}{\mathrm{MVF}}
\newcommand{\PERF}{\mathrm{PERF}}
\newcommand{\ACMVF}{\mathrm{ACMVF}}
\newcommand{\ACVF}{\mathrm{ACVF}}
\newcommand{\TVR}{\mathrm{TVR}}
\newcommand{\TPERF}{\mathrm{TPERF}}
\newcommand\id{\mathrm{id}}
\newcommand\TP{\mathrm{S}}
\newcommand\Cont{\mathrm{Cont}}
\newcommand\res{\mathrm{res}}
\renewcommand\div{\mathop{|}}
\newcommand\alg[1]{#1^{\mathrm{alg}}}
\newcommand\Spec{\mathrm{Spec}}
\newcommand\Spa{\mathrm{Spa}}
\newtheorem{thm}{Theorem}[section]
\newtheorem{cor}[thm]{Corollary}
\newtheorem{claim}[thm]{Claim}
\newtheorem{prop}[thm]{Proposition}
\newtheorem{lem}[thm]{Lemma}
\newtheorem{fact}[thm]{Fact}
\theoremstyle{definition}
\newtheorem{defn}[thm]{Definition}
\newtheorem{rem}[thm]{Remark}
\begin{document}

\title{The tilting equivalence as a
bi-interpretation}
\author{Silvain Rideau-Kikuchi}
\address{\'{E}cole Normale Sup\'{e}rieure \\
CNRS\\
D\'{e}partement de math\'{e}matiques et applications \\
45 rue d'Ulm \\
75230 PARIS cedex 05
\\ 
France}
\email{silvain.rideau-kikuchi@ens.fr}

\author{Thomas Scanlon}
\address{University of California, Berkeley \\
Department of Mathematics \\
Evans Hall \\
Berkeley, CA 94720-3840 \\
USA}
\email{scanlon@math.berkeley.edu}

\author{Pierre Simon}
\email{pierre.sim85@gmail.com}

\begin{abstract}
We propose a model theoretic interpretation
of the theorems about the equivalence between 
mixed characteristic perfectoid spaces and 
their tilts.
\end{abstract}

\maketitle

\section{Introduction}
A perfectoid field is a complete non-trivially normed field \((K,\vert\cdot\vert)\)
of residue characteristic \(p\) for some prime number \(p\) for 
which the set of norms \(|K^\times|\) is dense in \(\mathbb{R}_+\) and 
the map \(x \mapsto x^p\) is surjective as a self-map 
on the ring \(\mathcal{O}(K)/p \mathcal{O}(K)\) where 
\(\mathcal{O}(K) = \{x \in K : \vert x \vert \leq 1 \}\) is the ring of 
integers of \(K\).  Given a perfectoid field \(K\), the tilt \(K^\flat\) is 
obtained, as a multiplicative monoid, as the projective limit of 
the system in which \(K\) maps to itself repeatedly via 
\(x \mapsto x^p\): \[K^\flat := 
\varprojlim ( \begin{tikzcd} \cdots \ar[r] & K \ar[r,"{x \mapsto x^p}"] & 
 K \ar[r,"{x \mapsto x^p}"] & K \end{tikzcd})\]
 The tilt \(K^\flat\) carries a natural structure of a complete normed 
 field of characteristic \(p\).  (We recall the details of this 
 construction in Section~\ref{sec:bi-interpretations}.)  
It can happen that two different perfectoid fields have isomorphic tilts. For a
 fixed  complete perfect nontrivially normed field \(L\) of characteristic \(p\) the family
 of untilts, that is, characteristic zero perfectoid fields \(K\) with \(K^\flat
 \cong L\), is parameterized by the Fargues\hyp{}Fontaine curve \cite{FarFon}.
 That is, there is a natural correspondence between perfectoid fields and
 perfect non-trivially normed fields fields of positive characteristic given with
 a point on the Fargues\hyp{}Fontaine curve.

 This tilting correspondence has been used to 
 transfer properties between mixed and 
 positive characteristic.  For example, 
 in~\cite{Scholze}, Scholze extends the 
 tilting operation to adic spaces, establishes
 an equivalence of categories of adic spaces
 over \(K\) and its tilt \(K^\flat\), and 
 uses this equivalence to transfer the truth 
 of the weight monodromy conjecture 
 for complete intersections in positive
 characteristic to mixed characteristic.  For another
 celebrated use of 
 perfectoids, consider 
 Andr\'{e}'s proof of
 the 
 direct summand conjecture
 in~\cite{Andre-facteur-direct} in which he ports positive characteristic
 proof techniques to mixed 
 characteristic.   
 
 Unlike the case of the 
 Ax-Kochen-Ershov-type theorems
 in which theories of henselian 
 fields of mixed 
 characteristic and of positive 
 characteristic converge 
 as the residue characteristic 
 grows, the tilt/untilt 
 correspondence is not 
 asymptotic; it allows for 
 direct comparisons between 
 mixed characteristic perfectoid
 fields and positive characteristic
 perfect fields with exactly the 
 same residue fields.  This is 
 curious from the 
standpoint of mathematical logic as 
the correspondence could not possibly reflect 
an equality of theories, since, for 
instance, the characteristic of the 
field is captured by the theory.  
This article is motivated by the 
problem of answering the riddle of 
how the tilt/untilt correspondence
might be explained through 
mathematical logic.

An answer is given by
Jahnke and Kartas in~\cite{JahKar-Perf}. Their main theorem is that if \(K\) is
a perfectoid field, then for any non-principal ultrapower \(K^\fU\), the tilt
\(K^\flat\) of \(K\) may be realized as an elementary structure of a residue
field of \(K^\fU\). (See~\cite[Theorem 6.2.3]{JahKar-Perf} for details.)  Their
proof gives an equivalence of categories between the category of finite
\'{e}tale algebras over \(K\) and the corresponding category over \(K^\flat\)
and the theorem itself allows them to replace parts of almost mathematics~\cite{Gabber-Ramero}
with ordinary commutative algebra.

Our answer is that the tilt/untilt correspondence is a quantifier\hyp{}free
bi\hyp{}interpretation in continuous logic, largely based on 
a reformulation of Fargues and Fontaine's
account of the tilting equivalence \cite{FarFon}. We note in
Section~\ref{sec:consequences} that the existence of this bi\hyp{}interpretation
implies that various important features of perfectoid fields are preserved by
the tilt/untilt correspondence.  These consequences include the
Fontaine\hyp{}Wintenberger theorem that a perfectoid field and its tilt have
canonically isomorphic Galois groups, identifications between adic spaces over a
perfectoid field and adic space over its tilt, and an approximation lemma
whereby the tilt of an algebraic variety over a perfectoid field may be
approximated by sets defined by quantifier\hyp{}free first\hyp{}order formulas
in the language of valued fields.   

This present paper is long in the tooth. The main observations and constructions were already 
completed while the three authors were working together at Berkeley during the 2016/7
academic year.   Our delay in 
preparing the work for publication resulted from our thwarted attempts to upgrade the approximation lemma
to a strong enough form to allow for a transfer of further theorems in positive characteristic to 
mixed characteristic, the weight monodromy conjecture without a restriction to complete intersections being our
target application.  We doubt that our methods will suffice to achieve such an end, but, perhaps, 
a reader cleverer than ourselves could implement the strategy.  

For us, the main contribution of the
present paper is to demonstrate that the theory 
and important 
consequences of the tilt/untilt correspondence may be established
through the conceptual lens of the continuous logic 
of valued fields requiring only a minimum of direct computation.  
Of course, mathematics of this depth requires some real technical work and this appears on the surface here 
with the constructions taken from~\cite{FarFon} and in the background with the modern versions (e.g.~\cite{HHM}) of the 
theorems of Abraham Robinson on the theory of algebraically closed valued fields~\cite{Robinson}.

Our work leaves open some 
natural problems about 
understanding the theory
of perfectoids through 
continuous logic.  
We have already noted the 
aim of obtaining more
general approximation 
results from the 
bi\hyp{}interpretation.
Another goal would be to 
explain the tilt/untilt
correspondence for more 
general perfectoid 
algebras as a 
bi\hyp{}interpretation.
It seems to us that most
of the explicit constructions
we carry out work 
at a higher level of 
generality.  A technical 
problem is that we would 
need a better continuous
logic theory of \(p\)-adic
Banach algebras to express
the tilt/untilt 
correspondence as
our expected 
bi\hyp{}interpretation.  
This issue is related to 
a point we make with 
Remark~\ref{rem:BY-MVF}
that with Ben Yaacov's 
formulation of the 
theory of metric valued 
fields it is not obvious
that untilting can be 
made into an interpetation 
because, relative to that 
theory, valuation rings are
not definable.  Finally, 
as we note in Remark~\ref{rem:correct-topology}, the equivalence
(see Section~\ref{sec:type-spaces})
we may establish between 
adic spaces using 
continuous logic 
type spaces does not
give the right topology and  
we expect that a resolution 
of this issue would follow 
from a presentation of the 
tilt/untilt correspondence
based on \emph{positive}
continuous logic. 

This paper is organized as follows.  In Section~\ref{sec:some-contunuous-logic} we recall
the formalism of continuous logic for metric structures and work out the basics of interpretations in 
continuous logic.  In Section~\ref{sec:MVF} we introduce and develop 
the theories of metric valued fields and, 
more specifically, of perfectoid fields from the 
standpoint of continuous logic.  Section~\ref{sec:bi-interpretations} comprises the core this paper.  
There we construct bi\hyp{}interpretations between three theories: perfectoid fields, truncated perfectoid rings, 
and perfect metric valued fields of positive characteristic equipped with a Fargues\hyp{}Fontaine 
parameter.   In Section~\ref{sec:consequences} we express some consequences of the existence
of the bi\hyp{}interpretations 
produced in the earlier 
section.

\subsection*{Acknowledgements}

During the writing of this paper, S. R.-K. was partially supported by grants ValCoMo (ANR-13-BS01-0006) and GeoMod AAPG2019 (ANR-DFG).
T.S. was partially supported by grants DMS-1502219, DMS-1760413, and
DMS-1800492, and DMS-2201045 of the United States National Science Foundation, and P.S. was partially supported by  
grants DMS-1665491 and DMS-1848562 of the United States National Science Foundation.

The authors thank Leo Gitin and Tom\'{a}s Ibarluc\'{\i}a for suggesting corrections to an earlier version of this paper.

\section{Some continuous logic} 
\label{sec:some-contunuous-logic}
In this section we recall some 
of the basics of continuous 
logic with a special emphasis on
the theory of interpretations.

\subsection{Continuous logic 
formalism}
We work in the setting of bounded continuous logic as presented in \cite{continuous-logic-book}.
Let us recall the basic notions.

\begin{defn}
A \emph{language} is a set \(\L\) of function symbols, constants and relation symbols
with the following additional data:
\begin{itemize}
    \item for each relation symbol $R$, an arity \(k_R\), a bounded interval
    $I_R \subseteq \mathbb R$ and a modulus of continuity $\Delta_R : (0,1]\to
    (0,1]$;
    \item for each function symbol $f$, an arity \(k_f\) and a modulus
    of continuity $\Delta_f : (0,1]\to (0,1]$;
    \item a real constant \(D_\L\). 
\end{itemize}

An \emph{\(\L\)-structure} is then a complete metric space $(M,d)$ of diameter at most
\(D_\L\) with, in addition,
\begin{itemize}
\item for each relation symbol $R$, a function  \(R^{M} : M^{k_R} \to I_R\)
which is \(\Delta_R\)-uniformly continuous;
\item for each \(k\)-ary function symbol \(f\), a function \(f^{M}: M^{k_f} \to M\)
which is \(\Delta_f\)-uniformly continuous;
\item for each constant symbol \(c\), an element \(c^{M}\) of \(M\).
\end{itemize}
\end{defn}

From now on, we assume that \(D_\L=1\) and that \(I_R = [0,1]\) for all \(R\). The
general case poses no extra difficulty.

\begin{defn}   
\label{def:formula}
\emph{Terms} of \(\L\) are built from the variables and constant symbols by composition with function symbols.

\emph{Formulas} of \(\L\) are formed from the relations and function symbols of
the language \(\L\), using terms, connectives -- that is, continuous functions \([0,1]^{\mathbb{N}}
\to[0,1]\) -- and quantifiers $\sup_x$ and $\inf_x$, for any variable $x$. We
allow formulas with countably many (free) variables.
\end{defn}

If \(x\) is a tuple of \(n\) 
variables, then we will use the 
notation \(M^x\) for the 
Cartesian power \(M^{|x|}\).
Note that we allow for 
the possibility that \(x\) is 
a countable tuple \(x = (x_0, x_1, x_2, \ldots )\).
We treat \(M^x\) itself as a 
complete metric space.
To be concrete, we define a 
metric by 
\( d_{M^x}(u, v) :=  \sup 2^{-i} d(u_i,v_i)\) where \( u_i \) is 
the \(i^\text{th}\) co\hyp{}ordinate
of \( u \).  When \(x\) is a 
finite tuple, this 
metric is equivalent to any of
the other natural choices, 
\emph{e.g.}, \(\vert\vert\cdot\vert\vert_\infty\) or \(\vert\vert\cdot\vert\vert_2\),
that one might prefer.
In the case where \(x\) is 
infinite, we need to be 
careful to ensure that the 
projection maps \(M^x \to M\) are uniformly continuous.
Given an \(\L\)-structure \(M\), a formula with free variables \(x\) is
interpreted in the natural way, yielding a uniformly continuous function
\(M^{x} \to [0,1]\). 

\begin{defn}
\label{def types}
Fix a language \(\L\). A \emph{partial type} in variables \(x\) is
a finitely consistent collection of conditions of the form \(\varphi(x) = 0\). A
\emph{type} is a maximal partial type.  A closed partial type, that is a partial type in the empty tuple of variables,
is called a \emph{theory}.
\end{defn}

\begin{defn}
\label{def:definable-predicate}
A \emph{definable predicate} on \(M^x\) is the interpretation of a formula \(\phi(x)\)
with free variables \(x\). 
 A subset \(X\subseteq M^x\) is \emph{definable} if \(d(x,X) = \inf_{y\in
X} d(x,y)\) is a definable predicate.  A \emph{definable function} is a function 
\(f:M^x \to M\) for which 
\( d(f(x),y) \) is a definable
predicate on \(M^{x,y}\).
\end{defn}

\begin{rem}
\label{rem:uniform-limits}    
In some presentations of continuous logic, a distinction is made between the definable 
predicates realized as the interpretations of formulas and uniform limits of 
such predicates.  Since we permit \( x \) to be a possibly countably infinite tuple of 
variables and allow all continuous functions on \([0,1]^x\) as connectives, there is no distinction
between these two classes of predicates.  Indeed, by restricting
to a subsequence, any uniform limit of predicates may be 
realized as a limit of predicates which are uniformly Cauchy for 
any given modulus of Cauchy uniformity.  Let us fix one such, 
\(\epsilon:\mathbb{N} \to (0,1] \).  The set 
\( C_\epsilon := \{ a \in [0,1]^{\mathbb{N}} : (\forall n, m) |a_n - a_m| \leq \epsilon(\min\{ n, m \}) \} \) is a closed subset of the 
normal space \([0,1]^{\mathbb{N}}\).  
The function \( f:C_\epsilon \to [0,1] \) given by \( f(a) := \lim_{n\to \infty}
a_n \) is continuous and bounded.  Thus, by the Tietze-Urysohn extension theorem
it extends to a continuous function 
\( F:[0,1]^{\mathbb{N}} \to [0,1] \).  Given an
\(\epsilon\)-Cauchy sequence of  
sequence \( (P_n(y))_{n=0}^\infty \) of definable predicates, 
we may compute $\lim_{n \to \infty} P_n(y)$ as the connective 
\( F \) applied to the sequence \( (P_n)_{n=0}^\infty \).
\end{rem}

\begin{rem}
\label{remark:first-order-as-continuous}
An \(\L\)-structure \(M\) in the 
sense of ordinary first\hyp{}order logic
may be regarded as a 
structure with respect to continuous 
logic by giving \(M\) the discrete metric
defined by \(d(x,y) = 0\) if \(x = y\)
and \(d(x,y) = 1\) if \(x \neq y\) and interpreting 
each relation symbol \(R\) in the variables
\(x\) as the uniformly continuous predicate \(R:M^x \to \mathbb{R}\) defined as \(R(a) = 0 \text{ if } M \models R(a) \) and 
\(R(a) = 1 \text{ if } M \models \neg R(a) \) where we may take \(1\) as the
modulus of uniform continuity.  
In this way, for any first\hyp{}order formula
\(\vartheta\) in the free variables 
\(x\) we have a predicate 
\(\vartheta:M^x \to \mathbb{R}\) also 
defined by \(\vartheta(a) = 0 
\text{ if } M \models \vartheta(a)\) 
and \(\vartheta(a) = 1\) otherwise. 
Because we may apply continuous 
connectives in order to produce new
definable predicates, there are many
other definable predicates even for 
this discrete structure.  A dense
set of such predicates may be 
obtained as follows.  Let 
\(\vartheta_1, \ldots, \vartheta_n\)
be a finite sequence formulas in the 
free variable \(x\) (which may be 
a tuple).  Let \(\alpha_1, \ldots, 
\alpha_n \in \mathbb{R}\) be 
a sequence of real numbers of the 
same length.  Then \( \sum_{i=1}^n 
\alpha_i \vartheta_i \) is a
definable predicate from 
\(M^x\) to \(\mathbb{R}\). 
\end{rem}

\subsection{Interpretations in 
continuous logic}
\label{sec:interp-continuous}

We need to extend the theory of interpretations to continuous logic.  For an
account in the context of (possibly infinitary) discrete logics
see~\cite[Chapter 5]{Hodges-mt}. We follow and expand on~\cite{BYKai}.

\begin{defn}
\label{def:fully-interpreted} 
Let \(M\) be an \(\L\)-structure. If \(\partial\) is a definable pseudometric
on some \(M^x\), we consider the quotient \(M^x/\partial\) with the associated
metric also denoted \(\partial\). A (quantifier\hyp{}free) definable predicate
on \(M^x/\partial\) is any real-valued function \((M^x/\partial)^y \to \mathbb{R}\) whose
pullback to \((M^x)^y\) is a (quantifier\hyp{}free) definable predicate.

As previously, \(X\subseteq (M^x/\partial)^y\) is definable if \(\partial(x,X)\)
is a definable predicate. Such an \(X\) is said to be interpretable. A
(quantifier\hyp{}free) definable predicate on \(X\) is the restriction of a
(quantifier\hyp{}free) definable predicate on \(M^x/\partial\).
\end{defn}

The full induced structure \(X^{\mathrm{full}}\) is obtained by naming every definable
predicate on~\(X\).

\begin{defn}
\label{def:interpretation-continuous}
An interpretation of the \(\L_2\)-structure \(N\) in the \(\L_1\)-structure
\(M\) is given by an \(\L_1\)-interpretable set \(\widetilde{N}\) and an
isometric $I:\widetilde{N} \to N$ so that for every \(\L_2\)-definable predicate
$Q:N^y \to \mathbb{R}$ the composite \(Q \circ I^y:\widetilde{N}^y \to
\mathbb{R} \) is a \(\L_1\)-definable predicate.  
\end{defn}

In other words, an interpreted structure is a reduct of \(X^{\mathrm{full}}\)
for some interpretable set \(X\).

\begin{rem}
\label{rem:alternative-interpretation}
As in the discrete case, it is sometimes useful to remember how the universe of
an interpretation appears as an interpretable set. Given an interpretation
\(I:\widetilde{N} \to N\) in the sense of
Definition~\ref{def:interpretation-continuous}, there is some definable
pseudometric \(\partial\) on \(M^x\) for some choice of variables \(x\) and a
definable predicate \(\varphi_{\partial_{\widetilde{N}}}\) on \(M^x\) inducing
the “\(\partial\)-distance to \(\widetilde{N}\)” predicate on \(M^x/\partial\).
If we let \(\pi_\partial:M^x \to M^x/\partial\) be the natural quotient map and
\(\widehat{N} := \pi_\partial^{-1} \widetilde{N} =
\varphi_{d_{\widetilde{N}}}^{-1} \{0\}\), then the data of \(I:\widetilde{N} \to
N\) is equivalent to that of the map \(I \circ \pi_\partial:\widehat{N} \to N\)
where now \(\widehat{I} := I \circ \pi_\partial\) is a surjective map which
pulls back the metric on \(N\) to the definable pseudometric on \(\widehat{N}\)
which is a definable set relative to that pseudometric.
\end{rem}

In Definition~\ref{def:interpretation-continuous} we wrote \(I^y\) for the
function on the Cartesian power \(\widetilde{N}^y\) of \(\widetilde{N}\) indexed
by the variables \(y\).  From now on we will simply write \(I\) for the natural
extension of \(I\) to Cartesian powers and restrictions to subsets.  Using these
natural extensions, we may compose interpretations.  If \(M_i\) is an \(\L_i\)
structure for \(i = 1\), \(2\), or \(3\) and we have interpretations
\(I:\widetilde{M_2} \to M_2\) and \(J:\widetilde{M_3} \to M_3 \) of \(M_2\) in
\(M_1\) and of \(M_3\) in \(M_2\) respectively, where \(\widetilde{M_2}
\subseteq M_1^x/\partial_2\) for some definable pseudometric \(\partial_2\) and
\( \widetilde{M_3} \subseteq M_2^y/\partial_3 \) for some definable pseudometric
\(\partial_3\), then \(J \circ I\), by which we mean \(J \circ (I^y/\partial_2)
\) where \(I^y/\partial_2\) is \(\pi_{\partial_2} \circ I^y\) with
\(\pi_{\partial_2}:M_2^y \to M_2^y/\partial_2\) being the natural quotient map,
is an interpretation of \(M_3\) in \(M_1\). Note that,
by~\cite[Proposition~3.6]{BY-reconstruction}, the pullback of the pseudometric
\(\partial_3\) on \(M_2\) to \(\widetilde{M_2}\) extends to a pseudometric on
\(M_1^x/\partial_2\) which we can pullback to \(M_1\) to define the double
interpretation.

\begin{rem}
\label{remark:hyperimaginaries-as-interpretable-structures}
We discussed in Remark~\ref{remark:first-order-as-continuous} how a first\hyp{}order 
structure may be regarded as a
structure for continuous logic. 
The structures interpretable in 
the usual sense of first\hyp{}order logic
are also interpretable in this 
extended sense.  However, there are 
other structures which may have 
non-discrete metrics which we can 
interpret in these discrete structures.
For example, the construction of 
hyperimaginaries in the sense of 
first\hyp{}order logic becomes a simple 
interpretation in continuous logic. 
\end{rem}

As with interpretations in the context of discrete structures, it is only necessary to check 
that the basic structure pulls back to definable 
predicates.

\begin{prop}
\label{prop:interpretation-basic-structure}
We are given two languages \(\L_1\) and \(\L_2\), an \(\L_1\)-structure \(M\),  
an \(\L_2\)-structure \(N\) with structural 
metric \(d_N\), an \(\L_1\)-interpretable set $\widetilde{N}$ in
\(M\), and an isometry (which for
us must be surjective) \(I:\widetilde{N} \to N\).  If \(I\) respects
the basic structure on \(N\) in the sense that each of 
\begin{itemize}
%\item \(d_N(I(y_1),I(y_2)) \) for \( d_N \) the distinguished metric on \(N\),
\item \( d_N(I(y),c) \) for \(c\) an \(\L_2\)-constant symbol,
\item \( d_N (I(y),f(I(z))) \) for \( f \) an \(\L_2\)-function symbol in the 
variables \( z \), and 
\item \( P(I(z)) \) for \( P \) an \(\L_2\)-predicate in the variable \( z\)
\end{itemize}
is a definable predicate on \(\widetilde{N}^z \), then \(I\) is an
interpretation of \(N\) in \(M\).  
\end{prop}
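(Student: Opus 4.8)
The plan is to reduce the statement about arbitrary $\L_2$-definable predicates to the four basic cases by an induction on the construction of formulas, following the usual Hodges-style argument for interpretations but tracking uniform continuity throughout. First I would record the base case: an atomic $\L_2$-formula is, up to composition with terms, either of the form $d_N(t_1,t_2)$, $R(t_1,\dots,t_k)$ for a relation symbol $R$, or built from constant symbols and function symbols applied to variables. The hypothesis handles $P(I(z))$ for $P$ a predicate symbol, $d_N(I(x_1),I(x_2))$, $d_N(I(y),c)$, and $d_N(I(y),f(I(z)))$ directly; the remaining atomic cases are obtained by composing these, so I must first show that if $g:N^z\to N$ is an $\L_2$-term whose graph pulls back to a definable predicate on $\widetilde N^{z,w}$ (i.e.\ $d_N(I(w),g(I(z)))$ is $\L_1$-definable) then the same holds for $f(g_1,\dots,g_k)$. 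This is the routine ``definable functions compose'' lemma in the metric setting: one writes $d_N(I(w),f(g_1(I(z)),\dots))$ as an $\inf$ over auxiliary tuples $v_1,\dots,v_k$ in $\widetilde N$ of a sum of $d_N(I(v_j),g_j(I(z)))$ and $d_N(I(w),f(I(v_1),\dots,I(v_k)))$, each summand definable by induction and by the hypothesis on $f$; the $\inf$ over a definable set $\widetilde N$ is again a definable predicate, and I should note that the modulus of continuity of $f$ guarantees this $\inf$ is attained in the appropriate completed sense.

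Next I would handle the inductive steps for connectives and quantifiers. For a connective: if $u:[0,1]^{\mathbb N}\to[0,1]$ is continuous and $\phi(x) = u(\psi_0(x),\psi_1(x),\dots)$, then $\phi\circ I = u(\psi_0\circ I,\psi_1\circ I,\dots)$, which is a definable predicate on $\widetilde N^x$ because the $\psi_i\circ I$ are definable by induction and definable predicates are closed under applying continuous connectives (here I use Remark~\ref{rem:uniform-limits}, which guarantees there is no gap between formulas and uniform limits in this formalism, so the extension to $(M^x)^x$ poses no problem). For the quantifier step, $\phi(x) = \sup_{y} \psi(x,y)$: here $y$ ranges over $N$, but $\psi\circ I$ is a definable predicate on $\widetilde N^{x,y}$, and I would use that $\widetilde N$ is a \emph{definable} subset of $M^y$ (Definition~\ref{def:definable-predicate}), so $\sup_{y\in\widetilde N}(\psi\circ I)(x,y)$ is a definable predicate on $M^x$: one approximates it by $\sup_{y\in M^y}\bigl((\psi\circ I)(x,y) \dotminus \lambda\, d(y,\widetilde N)\bigr)$ and lets $\lambda\to\infty$, using that $\psi\circ I$ is uniformly continuous to control the approximation uniformly. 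Since $I$ is surjective, $\sup_{y\in\widetilde N}(\psi\circ I)(x,y)$ computes exactly $(\sup_y\psi)(I(x))$, as required; the $\inf$ quantifier is symmetric.

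Finally I would assemble these: by the above, $\{Q : Q\circ I \text{ is } \L_1\text{-definable}\}$ contains all atomic $\L_2$-predicates and is closed under connectives and quantifiers, hence contains every $\L_2$-definable predicate, which is the definition of $I$ being an interpretation. The main obstacle I anticipate is not any single step but the bookkeeping around \emph{uniform continuity and the passage from $\widetilde N^x$ to $(M^x)^x$}: at each stage I must check that the predicates I produce are genuinely uniformly continuous with a modulus depending only on the data (moduli of the symbols, of the connectives, and the definability data of $\widetilde N$), and that they extend from $\widetilde N^x$ to all of $(M^x)^x$ as required by Definition~\ref{def:interpretation-continuous} — this is where the definability (not mere type-definability) of $\widetilde N$ and the Tietze-type extension argument of Remark~\ref{rem:uniform-limits} do the real work, whereas in the discrete first-order setting these points are vacuous.
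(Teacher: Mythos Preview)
Your proposal is correct and follows exactly the approach the paper indicates: the paper's proof consists of the single sentence ``This proposition follows by induction on the construction of continuous logic formulas as in the classical case.  We leave the details to the reader,'' and you have supplied precisely those details, with appropriate attention to the continuous-logic subtleties (definability of $\widetilde{N}$ for the quantifier step, uniform continuity bookkeeping, and extension from $\widetilde{N}^z$ to the ambient $(M^x)^z$).
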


This proposition follows by induction on the construction of continuous 
logic formulas as in the classical case.  We leave the details 
to the reader.

As in the classical case, it makes sense to regard interpretations syntactically
so that one has a notion an interpretation of one theory in another.  That is,
if  we are given a formula \(\varphi_\partial(x,y)\) which is 
intended to define a pseudometric \(\partial\), 
another formula \(\varphi_{\widetilde{N}}(x)\) intended to define the 
\(\partial\)-distance to some definable set \(\widetilde{N}\), and 
formulas whose interpretations give each of the predicates
described in Proposition~\ref{prop:interpretation-basic-structure}
%, for example,
%predicates \(\varphi_{\partial_{\widetilde{N}}}(x_1,x_2)\) giving the
%pseudo-distance, \(\varphi_{\widetilde{N}}(y) \) giving the distance to
%\(\widetilde{N}\), \( \varphi_{d_N}(y_1,y_2) \) giving the pullback of the
%metric of \(N\) to \(\widetilde{N}\), \emph{etc.}, 
so that for each
\(\L_1\)-structure \(M\) which a model of some given theory \(T_1\) 
the formulas \(\varphi_\partial\) and \(\varphi_{\widetilde{N}}\) define 
a pseudometric and distance to definable set \(\widetilde{N}\) as intended
and we obtain
an \(\L_2 \)-structure \( N \) modeling a theory \( T_2 \) by taking the
universe of \( N \) to be the zero set \(\widetilde{N} :=  \{ a \in M^x/\partial :
\varphi_{d_{\widetilde{N}}}(a) = 0 \} \) with the metric given by \( \partial \)
 and the remaining structure described by the formulas corresponding to the
bullet points of Proposition~\ref{prop:interpretation-basic-structure}, then we
would say that we have an interpretation of \(T_2\) in \(T_1\).  We will abuse
notation writing \( I \) for the interpretation given by this choice of formulas
and then for any model \( M \) of \( T_1 \) we will write \( I(M) \) for the
model of \( T_2 \) given by these formulas.

% Using the notion of 
% interpretable sets in 
% continuous logic, we 
% have another characterization 
% of interpretable structures.

% \begin{prop}
% \label{prop:interpretable-sets-and-interpretations}
% If \(I:\widetilde{N} \to N\) is an interpretation
% of the \(\L_2\)-structure \(N\) in the \(\L_1\)-structure \(M\), then there is an 
% interpretable set \(\mathfrak{X}^\text{full}\) 
% with structural metric \(\overline{\partial}\) 
% and an 
% isometry \(\overline{I}:\mathfrak{X}^\text{full} \to N\)  so that for each \(\L_2\)-definable 
% predicate \(P:N^y \to \mathbb{R}\), the pullback
% \(P^{\overline{I}} := P \circ \overline{I}:(\mathfrak{X}^\text{full})^y \to \mathbb{R}\) is
% definable in \(\mathfrak{X}^\text{full}\).  
% Hence, every structure interpretable in \(M\) is
% isomorphic to a reduct of some interpretable 
% set in \(M\).
% \end{prop}

The syntactic presentation of an interpretation gives rise 
through the proof of 
Proposition~\ref{prop:interpretation-basic-structure} 
to a translation \( P \mapsto P^I \) from the interpreted
language \(\L_2\) to the language \( \L_1 \) having the
property that for any \(\L_2\)-predicate \(P\) we have 
\(P^I = P \circ I \).

On the face of it, two interpretations given by equivalent (modulo \(T_1\))
formulas are the same in that the predicates are the same.  However, there may
be cases in which it pays to be more careful about the choice of the formulas.  
For example, if we can take the formulas to be simple enough, 
then an interpretation would restrict to give interpretations
for substructures.  

\begin{prop}
\label{prop:qf-interp}
With the notation as in 
Proposition~\ref{prop:interpretation-basic-structure}, 
if 
\begin{itemize} 
\item the predicates giving
% the pseudometric \(\partial_{\widetilde{N}}\),
the \(\partial\)-distance to \(\widetilde{N}\), the pullback of the distance \(
d_N \), and the pullback of the distinguished predicates on \(N\) are
quantifier\hyp{}free definable,
\item for each constant symbol \(c\) in \( \L_2\) there is a uniform limit of closed
\(\L_1\)-terms \( \widetilde{c} \) so that \(I(\widetilde{c}^M) = c^N \), and 
\item for each function symbol \(f\) with variable \(y\) in 
\( \L_2 \) there is a uniform limit of \(\L_1\)-terms \(\widetilde{f}\) so 
that \(I \circ \widetilde{f} = f \circ I\),
\end{itemize}
then the restriction of 
$I$ to any substructure of \( M \) in an interpretation of a 
substructure of \( N \).
\end{prop}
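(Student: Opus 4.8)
The statement to prove is Proposition~\ref{prop:qf-interp}: under the three bulleted hypotheses (quantifier-free definability of the core predicates, and the existence of term-approximations for constant and function symbols of $\L_2$), the interpretation $I$ restricts to an interpretation of a substructure of $N$ on every substructure of $M$. The plan is to unwind what ``restriction to a substructure'' means and then verify that each piece of the data defining an interpretation, in the sense of Proposition~\ref{prop:interpretation-basic-structure}, survives restriction. Fix a substructure $M_0 \subseteq M$. Since substructures in continuous logic are closed under the (uniformly continuous) function symbols and are complete metric subspaces, and since uniform limits of $\L_1$-terms are again uniformly continuous and are computed coordinatewise, the set $\widetilde{c}^{M_0}$ makes sense for each term-limit $\widetilde{c}$, and likewise $\widetilde{f}$ defines a function $M_0^y \to M_0$ by restriction.

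First I would define the candidate $\widetilde{N}_0 \subseteq M_0^x$ to be the zero-set in $M_0^x$ of the quantifier-free predicate $\varphi_{\widetilde N}$ giving the distance to $\widetilde N$; because $\varphi_{\widetilde N}$ is quantifier-free, its interpretation in $M_0$ is just the restriction of its interpretation in $M$, so $\widetilde{N}_0 = \widetilde N \cap M_0^x$. The same remark applies to $\varphi_{d_N}$ and to the pullbacks of the distinguished predicates of $\L_2$: quantifier-free predicates are absolute between $M$ and its substructures, so these pullbacks restrict cleanly to $\widetilde{N}_0$ and still define a pseudometric and predicates there. I would then let $N_0 := I(\widetilde{N}_0) \subseteq N$, with $I_0 := I|_{\widetilde{N}_0}$. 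Completeness of $N_0$ (needed for it to be a genuine $\L_2$-structure) follows because $\widetilde{N}_0$ is a closed subset of the complete space $M_0^x$ — it is a zero-set of a continuous predicate — and $N_0$ is its image under the quotient by the (definable, hence continuous) pseudometric $\varphi_{d_N}$; one takes the metric completion if one insists on the standard convention, but the point is that $N_0$ is the substructure of $N$ generated by the data.

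Next I would check that $N_0$ is closed under the $\L_2$-structure, so that it is indeed a substructure of $N$. For a constant symbol $c$ of $\L_2$, hypothesis (2) gives $I(\widetilde{c}^M) = c^N$, and since $\widetilde c$ is a uniform limit of closed $\L_1$-terms, $\widetilde{c}^{M_0} = \widetilde{c}^M \in M_0^x$ lies in $\widetilde{N}_0$ (it satisfies $\varphi_{\widetilde N} = 0$ because $\varphi_{\widetilde N}(\widetilde c^M) = 0$ holds in $M$ and is absolute), so $c^N = I(\widetilde{c}^{M_0}) \in N_0$. For a function symbol $f$ of $\L_2$ with variable $y$, hypothesis (3) gives $I \circ \widetilde f = f \circ I$ as functions on $\widetilde{N}^y$; restricting to $\widetilde{N}_0^y$ and using that $\widetilde f$ is a coordinatewise uniform limit of $\L_1$-terms (so maps $M_0^y$ into $M_0$, and in fact $\widetilde{N}_0^y$ into $\widetilde{N}_0$ once one observes the defining predicate of $\widetilde N$ pulls back appropriately — this is where one uses that $\widetilde N$ is the universe of an interpretable set, so stable under the relevant operations), we get that $N_0$ is closed under $f^N$. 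Finally, having established that $N_0 \subseteq N$ is a substructure and that $\widetilde{N}_0 = \widetilde N \cap M_0^x$ carries the restricted quantifier-free data, Proposition~\ref{prop:interpretation-basic-structure} applied inside $M_0$ shows $I_0 : \widetilde{N}_0 \to N_0$ is an interpretation of $N_0$ in $M_0$: each of the four bulleted predicates there is the restriction of the corresponding quantifier-free $\L_1$-predicate (for the metric and the distinguished $\L_2$-predicates) or is handled by the term-approximations $\widetilde c, \widetilde f$ (for constants and function symbols), exactly as in the proof of the unrestricted case.

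\textbf{The main obstacle.} The one genuinely delicate point is that $\widetilde N$ — being the universe of an \emph{interpretable set} in the sense of Definition~\ref{def:fully-interpreted}, not merely a definable set — must be shown to restrict correctly: one needs $\widetilde{N}_0 = \widetilde N \cap M_0^x$ to remain the universe of the analogous interpretable set over $M_0$, which requires that the pseudometric $\partial$ and the auxiliary predicates cutting out $\widetilde N$ are themselves quantifier-free (this is precisely the force of the first bullet) and that the term-approximations land inside $\widetilde{N}_0$, not merely inside $M_0^x$. Verifying this closure property — that $\widetilde f$ sends $\widetilde{N}_0^y$ into $\widetilde{N}_0$, rather than just into $M_0^y$ — is where a small argument is needed, since a priori $\widetilde f$ is only controlled on $\widetilde N^y$ by hypothesis (3); one resolves it by noting that $\varphi_{\widetilde N}(\widetilde f(u)) = 0$ holds for all $u \in \widetilde N^y$ in $M$ (as $I(\widetilde f(u)) = f(I(u)) \in N$ is defined), this is a quantifier-free condition, hence absolute, hence holds in $M_0$ for all $u \in \widetilde{N}_0^y$. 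With that observation in hand the rest is the routine induction already invoked for Proposition~\ref{prop:interpretation-basic-structure}, and we leave those details to the reader.
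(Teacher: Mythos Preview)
The paper states this proposition without proof, treating it as routine, so there is no argument to compare against. Your proposal correctly supplies the expected verification: quantifier-free predicates are absolute between $M$ and any substructure $M_0$, and uniform limits of $\L_1$-terms restrict to $M_0$ because substructures are complete metric subspaces closed under the function symbols; together these give that $N_0 := I(\widetilde{N} \cap M_0^x)$ is a substructure of $N$ and that the bulleted conditions of Proposition~\ref{prop:interpretation-basic-structure} hold for $I_0$.

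Two minor remarks. First, you describe $\widetilde{N}$ as ``the universe of an interpretable set in the sense of Definition~\ref{def:fully-interpreted},'' but in the setup of Proposition~\ref{prop:interpretation-basic-structure} it is just an $\L_1$-definable subset of $M^x$; the interpretable set is the quotient. This does not affect your argument. Second, there is a small point you do not address: the restriction of $\varphi_{\widetilde{N}}$ to $M_0^x$ computes the distance to $\widetilde{N}$ in $M$, which a priori may differ from the distance to $\widetilde{N}_0 = \widetilde{N} \cap M_0^x$ in $M_0$, so strictly speaking you have not shown that $\widetilde{N}_0$ is definable in $M_0$ in the sense of Definition~\ref{def:definable-predicate}. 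In practice this is harmless---the data you actually need for Proposition~\ref{prop:interpretation-basic-structure} are the pullbacks of $d_N$, the constants, the functions, and the predicates, all of which you have handled---but it would be cleaner to note this explicitly or to observe that the zero-set characterization suffices for the intended use.
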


We will call the interpretations of Proposition~\ref{prop:qf-interp}
\emph{quantifier\hyp{}free interpretations}.  Note that 
requirement that the basic function symbols and constant symbols
be represented by terms is stronger than merely asking that
the pullback of the distance to the graphs 
of the interpretations of
the function symbols and to the interpretations of the 
constant symbols be quantifier\hyp{}free definable predicates.
The translations associated to 
quantifier\hyp{}free interpretations preserve the quantifier
complexity of predicates.

\begin{defn}
As with the classical theory, we say that two interpretations
\(I\) and \(J\) of \(N\) in \(M\) are \emph{homotopic} if the function \(J^{-1} \circ I \)  
from the domain \( \widetilde{N}_I \) of \(I\) and the domain \( \widetilde{N}_J
\) of \(J\) is definable. In the special case that we have a pair of
interpretations \(I\) of \(N\) in \(M\) and \(J\) of \(M\) in \(N\), we say that
this is a \emph{bi\hyp{}interpretation} if the composite \(I \circ J\) is
homotopic to the identity interpretation of \(M\) in itself and \(J \circ I\) is
homotopic to the identity interpretation of \(N\) in itself.
\end{defn}

\subsubsection{Preservation of 
definability by interpretations}
\label{sec:preservation-definability}
An important consequence of a pair of interpretations
forming a bi\hyp{}interpretation is that the image of a 
definable set under a definable predicate is itself
definable in the interpreted model.  In fact, we only need 
half of the condition on being a bi\hyp{}interpretation for 
this fact to hold. 

\begin{prop}
\label{prop:bi-interp-push}
Suppose that \(I\) is an interpretation of \(N\) in \(M\) and 
that \(J\) is an interpretation of \(M\) in \(N\).  Suppose 
moreover that \(J \circ I\) is homotopic to the identity 
interpretation of \(M\) in \(M\).  If \(P\) is an \(\L_2\)-predicate, then the function \(\overline{P}\)
on \(M\) defined by \(\overline{P}(x) := P(J^{-1}(x))\) is 
an \(\L_1\)-definable predicate. 

\end{prop}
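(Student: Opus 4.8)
The plan is to express $\overline{P}$ as the composite of a definable map furnished by the homotopy with a definable predicate coming from $I$. Write $K := J \circ I$, an interpretation of $M$ in $M$ whose domain is a definable set $Z$ --- a definable subset of an appropriate $M^{x'}$, obtained by pulling the domain of $J$ back along $I$ --- so that $K : Z \to M$ is surjective. By Definition~\ref{def:interpretation-continuous}, applied to the $\L_2$-definable predicate $P$, the predicate $g := P \circ I$ extends to an $\L_1$-definable predicate on the ambient $M^{x'}$, which we again denote $g$. Because $P$ is invariant under $J$, for any $\tilde{u} \in Z$ with $K(\tilde{u}) = m$ we have $g(\tilde{u}) = P(I(\tilde{u})) = P(J^{-1}(m)) = \overline{P}(m)$, since $I(\tilde{u})$ lies in the domain of $J$ and is sent to $m$ by $J$. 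Hence $g$ is constant, equal to $\overline{P}(m)$, on the (non-empty, by surjectivity of $K$) fibre $K^{-1}(m)$; in particular $g$ is invariant under the $\L_1$-definable equivalence relation ${\sim}$ on $Z$ given by $\tilde{u} \sim \tilde{v} :\Longleftrightarrow K(\tilde{u}) = K(\tilde{v})$.

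Give $Z/{\sim}$ the structural metric pulled back from $d_M$ along $K$, so that $K$ induces a canonical isometry $Z/{\sim} \to M$. The hypothesis that $K = J \circ I$ is homotopic to the identity interpretation of $M$ says precisely that the inverse of this isometry --- the map $h : M \to Z/{\sim}$ sending $m$ to the class of any $K$-preimage of $m$ --- is definable. Concretely, unwinding the meaning of a definable map into an interpretable set, this is the statement that the function $(m, \tilde{v}) \mapsto d_M(m, K(\tilde{v}))$, defined a priori only on $M \times Z$, extends to an $\L_1$-definable predicate $\varphi$ on $M \times M^{x'}$.

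Granting that $g$ descends to a definable predicate $\widehat{g}$ of the interpretable set $Z/{\sim}$, we are done: by the first paragraph $\overline{P} = \widehat{g} \circ h$ --- indeed if $h(m) = [\tilde{u}]$ then $\widehat{g}([\tilde{u}]) = g(\tilde{u}) = \overline{P}(m)$ --- and the composition of a definable map into an interpretable set with a definable predicate on that set is again definable (part of the calculus of interpretations, \emph{cf.}~Proposition~\ref{prop:interpretation-basic-structure}), so $\overline{P}$ is $\L_1$-definable. One can make this concrete and avoid speaking of $\widehat{g}$ directly: writing $\delta(m, \tilde{v}) := \varphi(m, \tilde{v}) + d(\tilde{v}, Z)$, an $\L_1$-definable predicate vanishing exactly at the pairs with $\tilde{v} \in K^{-1}(m)$ (definable sets being closed), one checks that $\overline{P}(m) = \lim_{n \to \infty} \inf_{\tilde{u}}\min\bigl(1,\, g(\tilde{u}) + n \cdot \delta(m, \tilde{u})\bigr)$, each term being $\L_1$-definable, and then invokes Remark~\ref{rem:uniform-limits}.

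The one non-formal point --- and what I expect to be the main obstacle --- is exactly the clause ``$g$ descends to a definable predicate of $Z/{\sim}$'', i.e.\ that the ${\sim}$-invariant predicate $g$ is uniformly continuous for the structural metric of $Z/{\sim}$; equivalently, that $\overline{P}$ is uniformly continuous, equivalently (in the concrete formulation) that the limit above is uniform in $m$. This is the step where one must genuinely use that $J \circ I$ is homotopic to the identity, and not just an interpretation: one needs a quantitative control, available in principle through the definable predicate $\varphi$, of how the fibres $K^{-1}(m)$ and the values of $g$ along them vary with $m$. Everything else --- the invariance computation of the first paragraph and, in the concrete formulation, the $\inf$/truncation bookkeeping --- is routine.
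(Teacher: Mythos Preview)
Your strategy coincides with the paper's: both write $\overline P$ as $P^I := P\circ I$ composed with the definable isomorphism $\alpha$ (your $h$) furnished by the homotopy. The paper's proof is three lines --- it names $\alpha$, cites \cite[Proposition~9.23]{continuous-logic-book} for the uniform continuity of $\alpha$, and concludes that $\overline P = P^I\circ\alpha$ is an $\L_1$-definable predicate --- so it absorbs into that citation exactly the step you flag as unresolved.

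You are right that this step carries the whole argument, and what you leave open is a genuine gap. The missing claim --- that the $\sim$-invariant predicate $g=P^I|_Z$ is uniformly continuous for the quotient pseudometric $d_M(K(\cdot),K(\cdot))$, equivalently that $\overline P$ is uniformly continuous on $M$ --- is not bypassed by your concrete forced-limit formula: that limit converges pointwise, but its \emph{uniform} convergence is equivalent to the very uniform continuity in question, so the reformulation is circular. One self-contained way to close the gap is to pass to type spaces. The homotopy makes $K:Z\to M$ a definable function, hence induces a continuous surjection $K_*:S_Z\to S_M$ of compact Hausdorff spaces; your invariance computation (together with the fact that in a saturated model automorphisms commute with the definable $K$) shows that the continuous map $\widetilde g:S_Z\to[0,1]$ induced by $g$ is constant on fibres of $K_*$; since a continuous surjection of compact Hausdorff spaces is a quotient map, $\widetilde g$ descends to a continuous function on $S_M$, i.e.\ to a definable predicate on $M$, which is $\overline P$.
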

\begin{proof}
Let \(\alpha:(J \circ I)(M) \to M  \) be the definable 
isomorphism witnessing that 
\( J \circ I \) is homotopic
to the identity self-interpretation of \( M \).
% By~\cite[Proposition 9.23]{continuous-logic-book}, 
% \(\alpha\) is a uniformly continuous function. 
Since \(I\) is an
interpretation, \(P^I  :=  P \circ I\) is an \( \L_1\)-definable 
predicate.  Unwinding the definition of \( \overline{P} \) we 
see that \(\overline{P} = P^I \circ \alpha \) is 
also an \( \L_1 \)-definable predicate.
\end{proof}

As a corollary of
Proposition~\ref{prop:bi-interp-push}
we have the following consequence 
on the images of definable sets.

\begin{cor}
\label{cor:image-definable}
Under the hypotheses of Proposition~\ref{prop:bi-interp-push}, if \(X \subseteq
\widetilde{M}^y\) is an \(\L_2\)-definable set, 
then \(J(X) \subseteq M^y\) is an
\(\L_1\)-definable set.
\end{cor}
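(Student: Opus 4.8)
The plan is to deduce \Cref{cor:image-definable} directly from \Cref{prop:bi-interp-push} by taking as $P$ the $\L_2$-definable predicate measuring distance to $X$ and checking that it is $J$-invariant. First I would recall that, since $X \subseteq \widetilde M^y$ is $\L_2$-definable, the function $\delta_X(z) := \inf_{w \in X} d_N(z,w)$ is an $\L_2$-definable predicate on $\widetilde M^y$; here $d_N$ is the metric of the structure $N$ in which $\widetilde M$ sits, computed coordinatewise on powers as in the convention fixed after \Cref{def:formula}. The key point is that $\delta_X$ descends along $J$: because $J:\widetilde M \to M$ is surjective and an isometry, two tuples $u,v \in \widetilde M^y$ with $J(u) = J(v)$ satisfy $d_N(u,v) = 0$, hence $\delta_X(u) = \delta_X(v)$; that is, $\delta_X$ is invariant under $J$ in the sense required by \Cref{prop:bi-interp-push}.

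Next I would apply \Cref{prop:bi-interp-push} to $P = \delta_X$. The hypotheses are exactly those in force: $I$ interprets $N$ in $M$, $J$ interprets $M$ in $N$, and $J \circ I$ is homotopic to $\id_M$. The proposition then yields that the predicate $\overline{\delta_X}$ on $M^y$ defined by $\overline{\delta_X}(x) := \delta_X(J^{-1}(x))$ is $\L_1$-definable. It remains to identify $\overline{\delta_X}$ with the distance function to $J(X)$. Since $J$ is an isometry from $\widetilde M$ onto $M$ and the metrics on Cartesian powers are defined coordinatewise by the same weighting $2^{-i}$, the induced map $J:\widetilde M^y \to M^y$ is again an isometry and a bijection onto $M^y$; therefore for any $x \in M^y$, writing $z = J^{-1}(x) \in \widetilde M^y$, we have
\[
d_{M^y}(x, J(X)) = \inf_{w \in X} d_{M^y}(J(z), J(w)) = \inf_{w \in X} d_{\widetilde M^y}(z, w) = \delta_X(z) = \overline{\delta_X}(x).
\]
Thus $d(\,\cdot\,, J(X))$ equals the $\L_1$-definable predicate $\overline{\delta_X}$, which is precisely the statement that $J(X) \subseteq M^y$ is $\L_1$-definable.

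The one subtlety to handle carefully — and the step I expect to be the main obstacle — is the passage from $J$ being an isometry on $\widetilde M$ to $J$ being an isometry on the powers $\widetilde M^y$, together with the verification that $J(X)$ lands inside $M^y$ rather than some larger ambient space. For finite $y$ this is immediate, but the paper explicitly allows countable tuples of variables, so I would note that the coordinatewise-$2^{-i}$ metric makes $J^y$ isometric precisely because $J$ preserves distances in each coordinate and the sup is taken against a fixed summable (indeed bounded) weight sequence; no uniform continuity modulus is lost. I would also remark that surjectivity of $J$ onto $M$ gives surjectivity of $J^y$ onto $M^y$, so that $J^{-1}$ on powers is well-defined as used in \Cref{prop:bi-interp-push}. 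With these observations the corollary follows; I would leave the routine bookkeeping of the coordinatewise estimates to the reader.
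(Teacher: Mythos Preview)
Your proposal is correct and follows essentially the same route as the paper's proof: take \(P\) to be the distance-to-\(X\) predicate, observe that \(J\) being an isometry makes \(P\) invariant under \(J\), apply Proposition~\ref{prop:bi-interp-push}, and then use the isometry again to identify \(\overline{P}\) with the distance to \(J(X)\). Your additional care about extending the isometry to Cartesian powers indexed by possibly countable \(y\) is a reasonable elaboration, but the paper's proof simply leaves this implicit.
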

\begin{proof}
Let \(P(y) = d_N(y,X) \) be the \(\L_2\)-definable predicate expressing the
distance to \( X \).  By
Proposition~\ref{prop:bi-interp-push}, \(\overline{P} = P \circ J^{-1} \) is \(
\L_1\)-definable.  
Since \( J \) is an isometry, we have \(d_M(z,J(X)) = d_N(J^{-1}(z), X) =
P(J^{-1}(z)) = \overline{P}(z) \). That is, the distance to \(J(X)\) is an
\(\L_1\)-definable predicate, showing that \(J(X)\) is \(\L_1\)-definable.
\end{proof}

% \begin{rem}
% \label{rk:interpretable-set}
% Corollary~\ref{cor:image-definable} could be
% achieved under the weaker hypothesis that
% the pseudometric \(d_M^J(x,y)  := d_M(J(x),J(y))\)
% on \(\widetilde{M}\) given by pulling back the 
% structural metric of \(M\) is a metric equivalent
% to the restriction of the structural metric
% \(d_{N^y}\) of \(N^y\).   
% \end{rem}

\subsubsection{Preservation 
of existential closedness 
by interpretations}
\label{sec:preserve-ec}

A quantifier\hyp{}free 
interpretation of one theory in 
another will transform existentially 
closed extensions to existentially closed
extensions.  Before we prove this 
proposition, let us recall what we mean 
by an existentially closed extension in 
continuous logic.

\begin{defn}
\label{def:ex-ext}
The \(\L\)-structure \(M_1\) is 
existentially closed in 
\(M_2\), written \(M_1 \preceq_\exists M_2 \), if \(M_1\) is a substructure of 
\(M_2\) and for every pair of 
variables \( x\) and \(y\), 
quantifier\hyp{}free \( \L \) predicate
\(P(x,y)\) in the variables \((x,y)\) 
taking values in \([0,\infty)\), and 
point \(a \in M_1^y\), if \(\inf_x P^{M_2}(x,a) = 0 \), then  \( \inf_x P^{M_1}(x,a) = 0 \).  

We say that \( M_1 \) is existentially 
closed relative to the theory \( T \) if
for every extension \( M_2 \models T\) of \(M_1\), we have 
\(M_1 \preceq_\exists M_2\).
\end{defn}

\begin{prop}
\label{prop:ec-to-ec}
Let \( I \) be a 
quantifier\hyp{}free interpretation of the 
\( \L_2 \) theory \( T_2 \) in the 
\( \L_1 \) theory \( T_1 \).  If 
\(M_1 \preceq_\exists M_2 \) is 
an existentially closed
extension of models of \( T_1 \),
then 
\( I(M_1) \preceq_\exists I(M_2) \).
\end{prop}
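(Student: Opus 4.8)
The plan is to unwind the definitions of quantifier-free interpretation and existential closedness and check the condition directly. Suppose $I$ is presented syntactically by a quantifier-free definable set $\widetilde{N} \subseteq M^x$, a quantifier-free pseudometric $\partial$ on $\widetilde{N}$, and quantifier-free data for the $\L_2$-structure, together with uniform limits of $\L_1$-terms $\widetilde{c}$, $\widetilde{f}$ representing the $\L_2$-constants and function symbols. The key structural observation is that, because $M_1$ is a substructure of $M_2$ and the interpreting data is quantifier-free, $\widetilde{N}^{M_1} = \widetilde{N}^{M_2} \cap M_1^x$ (as sets, and with the same induced pseudometric $\partial$), so $I(M_1)$ is literally a substructure of $I(M_2)$ — this is exactly the content of Proposition~\ref{prop:qf-interp}, which I will invoke. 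In particular the natural map $I(M_1) \to I(M_2)$ is an isometric embedding respecting all the $\L_2$-structure, so it makes sense to ask whether it is existentially closed.

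Next I would take the witness data for a potential failure of $I(M_1) \preceq_\exists I(M_2)$: a quantifier-free $\L_2$-predicate $Q(u,v)$ valued in $[0,\infty)$, a point $b \in I(M_1)^v$, and the hypothesis $\inf_u Q^{I(M_2)}(u,b) = 0$; the goal is $\inf_u Q^{I(M_1)}(u,b) = 0$. Choose a representative $\widetilde{b} \in (\widetilde{N}^{M_1})^v \subseteq M_1^{xv}$ with $I(\widetilde{b}) = b$. The crucial translation step is that the pullback $Q \circ I$ along the interpretation is an $\L_1$-definable predicate $(Q \circ I)(u', v')$ on $(\widetilde{N})^{uv}$ — and since $I$ is a \emph{quantifier-free} interpretation, the translation $Q \mapsto Q^I$ preserves quantifier complexity (as noted after Proposition~\ref{prop:qf-interp}), so $Q^I$ is a quantifier-free $\L_1$-predicate, at least after absorbing the quantifier-free definable condition ``$u' \in \widetilde{N}$'' into the predicate (e.g. replacing $Q^I(u',v')$ by $Q^I(u',v') + \varphi_{\widetilde{N}}(u')$, which is still quantifier-free and still $\geq 0$). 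One subtlety here: $u'$ ranges over a copy of $M^x$ where $x$ may be a countably infinite tuple, and the connectives building $Q^I$ may be genuine continuous functions rather than finitary ones; this is harmless because Definition~\ref{def:ex-ext} already allows countable variable tuples and arbitrary quantifier-free predicates.

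Then I would simply transport the $\inf$ condition across the interpretation. By surjectivity of $I$ over $M_2$, $\inf_u Q^{I(M_2)}(u,b) = \inf_{u'} (Q^I)^{M_2}(u', \widetilde{b})$ where $u'$ ranges over $(\widetilde{N}^{M_2})^u \subseteq M_2^{xu}$; since the extra term $\varphi_{\widetilde{N}}(u')$ vanishes exactly on $\widetilde{N}$, this infimum over all of $M_2^{xu}$ equals the original infimum, so it is $0$ by hypothesis. Now $Q^I$ (with the correction term) is a quantifier-free $\L_1$-predicate valued in $[0,\infty)$ and $\widetilde{b} \in M_1^{xv}$, so $M_1 \preceq_\exists M_2$ applies verbatim and gives $\inf_{u'} (Q^I)^{M_1}(u', \widetilde{b}) = 0$. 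Running the translation backwards over $M_1$ — again using surjectivity of $I$ over $M_1$ and that the correction term forces $u'$ effectively into $\widetilde{N}^{M_1}$ — yields $\inf_u Q^{I(M_1)}(u, b) = 0$, as desired.

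The main obstacle, and the only place demanding genuine care, is the bookkeeping around quantifier complexity and the infinitary variable tuple: one must confirm that the translation $Q \mapsto Q^I$ produced by the proof of Proposition~\ref{prop:interpretation-basic-structure} is built only from $Q$, the quantifier-free interpreting predicates, and the term representations $\widetilde{c},\widetilde{f}$ of the basic symbols (so that no $\sup$ or $\inf$ is introduced), and that folding the definable-set condition into the predicate keeps everything quantifier-free and nonnegative. Everything else is a routine diagram chase through surjectivity of $I$ on both models and the fact, from Proposition~\ref{prop:qf-interp}, that the quantifier-free interpretation restricts correctly to the substructure $M_1$. I will therefore structure the write-up as: (1) invoke Proposition~\ref{prop:qf-interp} to get $I(M_1)$ a substructure of $I(M_2)$; (2) fix witness data and choose representatives; (3) form the corrected quantifier-free pullback $Q^I$; (4) apply $M_1 \preceq_\exists M_2$ and translate back.
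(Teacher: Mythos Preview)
Your proposal is correct and follows essentially the same route as the paper's proof: pull back the quantifier-free predicate $Q$ along $I$ to a quantifier-free $\L_1$-predicate $Q^I$, lift the parameter to a representative $\widetilde{b}$, apply $M_1 \preceq_\exists M_2$, and translate back. The only difference is one of explicitness. The paper handles the restriction of the infimum to the definable set $\widetilde{N}$ by simply citing that quantification over a definable set yields a definable predicate, whereas you spell out the mechanism by adding the quantifier-free correction term $\varphi_{\widetilde{N}}$ and (implicitly) replacing $Q^I$ by $\max(Q^I,0)$ so that the corrected predicate stays nonnegative; this makes the appeal to Definition~\ref{def:ex-ext} literally applicable and is a small improvement in rigor, not a different argument.
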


\begin{proof}
Let us write \( \widetilde{N} \) for the definable set 
in the variables \( x \) giving the universe of the interpretation \( I \).  
Let \(Q(y,z)\) be a quantifier\hyp{}free
\(\L_2\)-predicate and \( a \in I(M_1)^z \)
a point so that \( \inf_y Q^{I(M_2)}(y,a) = 0\). 
Let \( \widetilde{a} \in \widetilde{N}(M_1) \) with \(I(\widetilde{a}) = a \).  
Then \( \inf_z (Q^I)^{M_2}(z, \widetilde{a}) + \partial^{M_2}_{\widetilde{N}}(z)
= 0 \) and \( Q^I + \partial_{\widetilde{N}}\) pulls back to a
quantifier\hyp{}free predicate as \(I\) is quantifier\hyp{}free.
% Note that we are using the fact~\cite[Theorem 9.17]{continuous-logic-book} that 
% quantification over a definable set gives a definable predicate.    
Since $M_1 \preceq_\exists M_2$, we have \( \inf_{z} (Q^I)^{M_1}(z,
\widetilde{a}) + \partial^{M_1}_{\widetilde{N}}(z) = 0\) as well. As
\((Q^I)^{M_1}\) is uniformly continuous, it follows that \( \inf_{z\in
\widetilde{N}^y} (Q^I)^{M_1}(z, \widetilde{a}) = 0\). Unwinding the meaning of
this equality, we have \( \inf_{I(M_1)^y} Q^{I(M_1)}(y,a) = 0\), as required.
\end{proof}

It follows from Proposition~\ref{prop:ec-to-ec} that a quantifier\hyp{}free
bi\hyp{}interpretation between theories will take existentially
closed models to existentially closed models.  Let us express this 
result using a weaker condition than bi\hyp{}interpretation.

\begin{prop}
\label{prop:interp-full-ec}
Let \( I \) be a 
quantifier\hyp{}free interpretation of the 
\( \L_2 \) theory \( T_2 \) in the 
\( \L_1 \) theory \( T_1 \) having the property that for every 
extension \( N_1 \subseteq N_2 \) of models of \( T_2 \) there is
an extension \( M_1 \subseteq M_2 \) of models of \( T_1 \) and isomorphisms \( \rho_i: I(M_i) \cong N_i \)
for \(i = 1 \) and \( 2 \) fitting into the following commuting square.
\[ \begin{tikzcd} I(M_2) \ar[rr,"{\rho_2}"] && N_2 \\
I(M_1) \ar[u,hook] \ar[rr,"{\rho_1}"] && N_1 \ar[u,hook] 
\end{tikzcd}
\]
If \( M \) is an existentially closed model with respect to \( T_1 \), then 
\( I(M) \) is existentially closed with respect to \( T_2 \).
\end{prop}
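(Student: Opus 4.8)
The plan is to deduce Proposition~\ref{prop:interp-full-ec} from Proposition~\ref{prop:ec-to-ec} by a small diagram-chasing argument, using the hypothesized "lifting" of extensions of models of $T_2$ to extensions of models of $T_1$. First I would fix an existentially closed model $M$ of $T_1$ and an arbitrary extension $N_1 \subseteq N_2$ of models of $T_2$ with $N_1 = I(M)$; the goal is to show $N_1 \preceq_\exists N_2$, and since $N_2$ was arbitrary this gives existential closedness of $I(M)$ relative to $T_2$. Applying the hypothesis to the pair $N_1 \subseteq N_2$ produces an extension $M_1 \subseteq M_2$ of models of $T_1$ together with isomorphisms $\rho_i \colon I(M_i) \cong N_i$ commuting with the inclusions. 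The subtlety at this point is that $M$ need not literally equal $M_1$; what we know is only that $I(M) = N_1 \cong I(M_1)$.

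To bridge this gap I would invoke the standard fact (the analogue of the fact that existentially closed models are "universal-homogeneous enough") that since $M$ is existentially closed relative to $T_1$, one can embed $M_1$ — and hence the whole extension $M_1 \subseteq M_2$ — into an existentially closed model, or alternatively amalgamate so as to reduce to the case where $M_1$ itself is taken inside $M$. The cleanest route is: enlarge $M$ if necessary (existential closedness is preserved by taking a large enough elementary/existentially closed extension), realize $M_1$ as a substructure of some existentially closed $M_1' \succeq_\exists M_1$ with $M_1' \equiv M$, and then use that for existentially closed models of an inductive-type class, $I(M_1') \preceq_\exists I(M_1)$ fails in the wrong direction — so instead I would argue directly: by Proposition~\ref{prop:ec-to-ec} applied to $M_1 \preceq_\exists M_2$ whenever $M_1$ happens to be existentially closed, we get $I(M_1) \preceq_\exists I(M_2)$, i.e. $N_1 \preceq_\exists N_2$. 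So the real content is to arrange that the lifted $M_1$ can be assumed existentially closed (or that $M$ plays that role), which is exactly where one uses that \emph{every} extension of $N_1 = I(M)$ lifts.

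Concretely, here is the argument I would write: given any extension $N_1 \hookrightarrow N_2$ with $N_1 = I(M)$, apply the hypothesis to obtain $M_1 \hookrightarrow M_2$ and $\rho_i \colon I(M_i) \cong N_i$. Now $\rho_1$ identifies $I(M_1)$ with $I(M)$. Using that $M$ is existentially closed relative to $T_1$ and a routine amalgamation, one finds a model $M_1^* \models T_1$ containing $M_1$ with $M_1^*$ existentially closed (for instance any existentially closed model containing $M_1$; such exists because $T_1$, being a continuous theory, has existentially closed models over any given model, by a compactness/Zorn argument), together with a compatible extension $M_1^* \subseteq M_2^*$ obtained by pushing out $M_1 \hookrightarrow M_2$ along $M_1 \hookrightarrow M_1^*$ inside a suitable monster model. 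Then $M_1^* \preceq_\exists M_2^*$, so Proposition~\ref{prop:ec-to-ec} gives $I(M_1^*) \preceq_\exists I(M_2^*)$; restricting along the quantifier-free interpretation $I$ and the inclusions, and using that quantifier-free conditions and infima over the definable set $\widetilde N$ are preserved, one transports this back down to conclude $I(M_1) \preceq_\exists I(M_2)$, hence via the $\rho_i$ that $N_1 \preceq_\exists N_2$.

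The main obstacle — and the place to be careful in the write-up — is precisely the passage from the abstractly-lifted pair $M_1 \subseteq M_2$ to one in which the bottom model is existentially closed and still maps onto $I(M) = N_1$; one must make sure the amalgamation is carried out so that the induced map on $I(-)$ still hits all of $N_1$ and that the square with $N_1 \hookrightarrow N_2$ keeps commuting. Everything else — the behaviour of $\inf$ over definable sets (citing \cite[Theorem 9.17]{continuous-logic-book}), the preservation of quantifier-free predicates under a quantifier-free interpretation, and the unwinding of the definition of $\preceq_\exists$ through $I$ — is exactly as in the proof of Proposition~\ref{prop:ec-to-ec} and can be invoked essentially verbatim. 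I expect the final proof to be only a few lines longer than that of Proposition~\ref{prop:ec-to-ec}, with the bulk of the new work being the sentence that produces the existentially closed lift.
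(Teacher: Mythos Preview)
The paper's proof is one paragraph: starting from an extension \(I(M)\subseteq N'\), it asserts that ``possibly after applying an isomorphism'' the hypothesis yields an extension \(M\subseteq M'\) with \(I(M')\cong N'\); since \(M\) is existentially closed in \(T_1\) we have \(M\preceq_\exists M'\), and Proposition~\ref{prop:ec-to-ec} gives \(I(M)\preceq_\exists I(M')\cong N'\). In other words, the paper simply takes \(M_1=M\). You are right that the hypothesis, read literally, only produces \emph{some} \(M_1\) with \(I(M_1)\cong I(M)\), not necessarily \(M_1\cong M\); the paper is tacitly using the slightly stronger lifting property ``given \(N_1=I(M)\), one may choose \(M_1=M\)'', which is exactly what holds in the only application (Corollary~\ref{cor:binterp-ec}, where \(M_1=J(N_1)=J(I(M))\cong M\)).

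Your proposed repair, however, does not close the gap you spotted. You pass from \(M_1\subseteq M_2\) to an existentially closed \(M_1^*\supseteq M_1\) and a pushout \(M_2^*\), obtain \(I(M_1^*)\preceq_\exists I(M_2^*)\), and then claim one ``transports this back down'' to \(I(M_1)\preceq_\exists I(M_2)\). But that last step goes the wrong way: from \(\inf_x P(x,a)=0\) in \(I(M_2)\subseteq I(M_2^*)\) you get \(\inf_x P(x,a)=0\) in \(I(M_1^*)\), not in \(I(M_1)=N_1\). Nothing in your argument ever uses the specific existentially closed model \(M\) we started with; \(M\) drops out after the first line, and any \(M_1\) with \(I(M_1)\cong I(M)\) could have been used, so the argument cannot possibly exploit the assumption that \(M\) is existentially closed. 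The ``routine amalgamation'' you allude to would have to amalgamate \(M\) and \(M_2\) over something, but there is no common substructure in sight: all you know is \(I(M)\cong I(M_1)\), which is a statement about the interpreted structures, not about \(M\) and \(M_1\) themselves.

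The honest options are: (i) read the hypothesis as the paper evidently intends, namely that the lift can be chosen with \(M_1=M\) whenever \(N_1=I(M)\), and then the proof is the two lines above; or (ii) keep the weaker hypothesis but note that the argument then needs an additional assumption (e.g.\ that \(I\) reflects isomorphism, or that \(T_1\) has a model companion so that all e.c.\ models are elementarily equivalent and one can transfer along \(I(M)\cong I(M_1)\)). Your amalgamation detour does not substitute for either.
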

\begin{proof}
Let \( M \) be an existentially closed model with respect to \( T_1 \). Since \( I \) interprets \( T_2 \) in \( T_1 \),  \( I(M) \models T_2 \).   Let 
\(I(M) \subseteq N' \) be an extension of models of \( T_ 2 \). By our hypothesis, possibly after applying 
an isomorphism, we may find an extension \(M \subseteq M' \) of models of \(T_1\) so that the extension 
\(I(M) \subseteq N' \) is the extension \(I(M) \subseteq I(M')\) where the inclusion is the extension 
is obtained from the extension \(M \subseteq M' \) via the functoriality of \( I \).  Since \( M \) is 
existentially closed in the class of models of \( T_1 \), \(M  \preceq_\exists M' \).  By 
Proposition~\ref{prop:ec-to-ec}, \(I(M) \preceq_\exists I(M') = N' \).  Hence, \( I(M) \) is 
existentially closed with respect to \( T_2 \)
\end{proof}

\begin{cor}
\label{cor:binterp-ec} If \( I \) is  half of a quantifier\hyp{}free bi\hyp{}interpretation of the 
the theory \( T_2 \) in the theory \( T_1 \) and \( M \) is existentially closed with 
respect to \( T_1 \), then \( I(M) \) is existentially closed with respect to \( T_2 \).
\end{cor}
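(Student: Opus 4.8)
The plan is to deduce this immediately from Proposition~\ref{prop:interp-full-ec}: it is enough to verify that the quantifier\hyp{}free interpretation $I$ of $T_2$ in $T_1$ has the property hypothesized in that proposition. Let $J$ denote the other half of the bi\hyp{}interpretation, so that $J$ is a quantifier\hyp{}free interpretation of $T_1$ in $T_2$ and one of the two homotopies supplies, for every model $N$ of $T_2$, a definable isomorphism $\rho_N \colon I(J(N)) \to N$ of $\L_2$\hyp{}structures. The fact I would rely on is that all of the $\rho_N$ are induced by one and the same $\L_2$\hyp{}formula and that, since the bi\hyp{}interpretation is quantifier\hyp{}free, this formula may be taken quantifier\hyp{}free, so that $\rho_N$ is absolute under substructure inclusions.

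Given an extension $N_1 \subseteq N_2$ of models of $T_2$, I would take $M_i := J(N_i)$. Since $J$ interprets $T_1$ in $T_2$, each $M_i$ is a model of $T_1$; and since $J$ is quantifier\hyp{}free, Proposition~\ref{prop:qf-interp} tells us that $J$ restricts to substructures, whence $M_1 = J(N_1) \subseteq J(N_2) = M_2$ as $\L_1$\hyp{}structures. Applying the (likewise quantifier\hyp{}free) interpretation $I$ to this inclusion produces an inclusion $I(M_1) \subseteq I(M_2)$, and taking $\rho_i := \rho_{N_i} \colon I(M_i) = I(J(N_i)) \to N_i$ puts us in the situation of Proposition~\ref{prop:interp-full-ec}, with the square
\[ \begin{tikzcd} I(M_2) \ar[rr,"{\rho_2}"] && N_2 \\
I(M_1) \ar[u,hook] \ar[rr,"{\rho_1}"] && N_1 \ar[u,hook]
\end{tikzcd} \]
still to be checked to commute. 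Once that is done, Proposition~\ref{prop:interp-full-ec} yields the conclusion directly.

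So the crux, and the step I expect to be the main obstacle, is the commutativity of this square, that is, the naturality of the comparison isomorphisms with respect to the inclusion $N_1 \subseteq N_2$. Here $\rho_1$ and $\rho_2$ are the interpretations of the same quantifier\hyp{}free $\L_2$\hyp{}formula in $N_1$ and in $N_2$ respectively, while the inclusion $I(M_1) \subseteq I(M_2)$ is the tautological one on representative tuples; since quantifier\hyp{}free formulas are absolute between a structure and its substructures, the value of that formula on a tuple lying in $N_1$ is the same whether computed in $N_1$ or in $N_2$, whence $\rho_1$ is the restriction of $\rho_2$ to $I(M_1)$ and the square commutes. If one is uneasy that the homotopy witness was only required to be definable rather than quantifier\hyp{}free, one may either fold quantifier\hyp{}freeness of the comparison maps into the notion of a quantifier\hyp{}free bi\hyp{}interpretation (this is the sense in which the term is meant here) or check it by hand for the concrete bi\hyp{}interpretations constructed in Section~\ref{sec:bi-interpretations}. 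Everything else is routine bookkeeping with the functoriality of quantifier\hyp{}free interpretations established in Proposition~\ref{prop:qf-interp}.
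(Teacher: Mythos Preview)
Your proof is correct and follows the same route as the paper: reduce to Proposition~\ref{prop:interp-full-ec} by taking \(M_i := J(N_i)\) and using the homotopy isomorphisms \(I(J(N_i)) \cong N_i\). The paper's proof is the two-line version of yours; you are more careful in isolating the commutativity of the square and in flagging that this relies on the homotopy witness being quantifier\hyp{}free (which the paper leaves implicit, though it holds for the concrete bi\hyp{}interpretations of Section~\ref{sec:bi-interpretations}, where the comparison maps are given by terms or uniform limits of terms).
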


\begin{proof}
Let \( J \) be the other half of the bi\hyp{}interpretation.  Then any extension
\(N_1 \subseteq N_2 \) of models of \( T_2 \) is isomorphic to \( I (J(N_1))
\subseteq I (J(N_2) ) \).  Hence, Proposition~\ref{prop:interp-full-ec} applies.
\end{proof}

Sometimes we may check existential 
closedness relative to a weaker 
theory by considering only 
existential closedness relative to 
a stronger one. The following lemma
will be used in Section~\ref{sec:FW}.

\begin{lem}
\label{lem:cotheory-ec}    
If \(T \subseteq T'\) are 
\(\L\) theories having the property
that for any model \(M \models T\) of 
\(T\)
there is an extension \(M \subseteq N \models T'\) to a model of \(T'\), 
then a model \( M \models T'\) is 
existentially closed with respect to 
\(T\) if and only if it is existentially
closed relative to \(T'\).
\end{lem}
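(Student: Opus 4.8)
The plan is to prove both implications of the biconditional, noting at the outset that one direction is essentially trivial.

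\textbf{Direction (\(\Leftarrow\)).} Suppose \(M \models T'\) is existentially closed relative to \(T'\); I want to show it is existentially closed relative to \(T\). So let \(M \subseteq N\) be an extension with \(N \models T\). I cannot immediately apply the hypothesis, because \(N\) need not model \(T'\). Instead, I invoke the standing hypothesis of the lemma: there is an extension \(N \subseteq N'\) with \(N' \models T'\). Now \(M \subseteq N'\) is an extension of models of \(T'\), so existential closedness of \(M\) relative to \(T'\) gives \(M \preceq_\exists N'\). It remains to deduce \(M \preceq_\exists N\). This is the routine observation that \(\preceq_\exists\) is transitive downward: if \(M \subseteq N \subseteq N'\) and \(M \preceq_\exists N'\), then for any quantifier\hyp{}free predicate \(P(x,y)\) valued in \([0,\infty)\) and \(a \in M^y\), if \(\inf_x P^N(x,a) = 0\) then, since \(N \subseteq N'\) and \(P\) is quantifier\hyp{}free (hence its value on a tuple is computed the same way in \(N\) and in \(N'\)), \(\inf_x P^{N'}(x,a) = 0\), whence \(\inf_x P^M(x,a) = 0\) by \(M \preceq_\exists N'\). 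Thus \(M \preceq_\exists N\), as desired. (The one subtlety worth a sentence: \(\inf_x P^N(x,a)\) is an infimum over \(N^x\) while \(\inf_x P^{N'}(x,a)\) is over the possibly larger \(N'^x\), so the latter is \(\le\) the former, which is all we need.)

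\textbf{Direction (\(\Rightarrow\)).} Suppose \(M \models T'\) is existentially closed relative to \(T\); I want existential closedness relative to \(T'\). Let \(M \subseteq N'\) with \(N' \models T'\). Since \(T \subseteq T'\), we have \(N' \models T\) as well, so this is in particular an extension of the kind considered in the definition of ``existentially closed relative to \(T\)''. Hence \(M \preceq_\exists N'\) directly. This direction uses nothing but \(T \subseteq T'\) and does not need the extension hypothesis.

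\textbf{Main obstacle.} There is no deep obstacle here; the content is entirely in keeping the two notions of ``existentially closed'' straight and in using the extension hypothesis in the right place (the \(\Leftarrow\) direction). The only point requiring a little care is the downward transitivity of \(\preceq_\exists\) through a tower \(M \subseteq N \subseteq N'\), and in particular the direction of the inequality between the two infima taken over different universes; but since we are only chasing the value \(0\) and infimizing over a larger set can only decrease the value, this goes through without friction. I would therefore write the proof as two short paragraphs mirroring the two bullet points above, with the transitivity remark spelled out once.
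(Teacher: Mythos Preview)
Your proof is correct and is exactly the standard argument the paper has in mind: the paper's own proof is the single sentence ``This is proven just as it would be in the first\hyp{}order case,'' so you have simply spelled out what the authors left implicit.
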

\begin{proof}
This is proven just 
as it would be in the first\hyp{}order 
case.
\end{proof}

\subsection{Types in continuous logic}
\label{sec:types-continuous}

We modify the notion of type 
spaces from~\cite[Section 3]{continuous-logic-book} slightly. Recall \cref{def:fully-interpreted} of an interpretable set and its induced predicates.

Let \(M\) be an \(\L\)-structure and \(X\) be an interpretable set in \(M\).
Recall (\cref{def types}) that a type in \(X\) is a maximal consistent
collection of conditions “\(\phi(x) = 0\)” where \(x\) ranges over \(X\).

\begin{defn}
\label{def:type-space}
We write \(S_X\) for the set of types in \(X\). We give \(S_X\) the weakest
topology so that for each \( \varphi \) the function \( S_X \to \mathbb{R} \)
defined by \(p \mapsto \varphi^p := \inf_r \phi(x) \dotminus r \in p \) is
continuous. If \(A \subseteq M\) is a subset of \(M\), then \(S_X(A)\) is the
type space \(S_X\) relative to the \(\L_A\)-structure \(M_A\) where the new
constant symbols \( a \in A\) are interpreted by \(a^{M_A} := a\).
\end{defn}

Since each formula \(\varphi(x)\) is constrained to take values in a compact
interval \(I_\varphi\), evaluation at all of the formulas in the variable \(x\)
realizes \(S_X\) as a closed --- and hence, compact --- subset of the product
\(\prod_\varphi I_\varphi\) of compact intervals over the set of all such
formulas. Thus, \(S_X\) is itself a compact Hausdorff space.

Since, as discussed in 
Remark~\ref{rem:uniform-limits},
we allow for a process of taking 
uniform limits as a connective, 
using~\cite[Proposition 3.4]{continuous-logic-book}, we see that the space
\(C(S_X,\mathbb{R})\) of continuous 
real-valued functions on the type space
\(S_X\) may be identified with the set of 
\(\L\) formulas on \(X\) via evaluation.
This observation yields the following 
proposition about maps on type spaces
induced by interpretations.

\begin{prop}
\label{prop:interp-type}
If \(I:\widetilde{N} \to N\) is
an interpretation of the \(\L_2\)-structure
\(N\) in the \(\L_1\)-structure, then 
\(I\) induces a continuous map 
\(I_*:S_{\widetilde{N}} \to S_N\).
\end{prop}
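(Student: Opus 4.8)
The plan is to define $I_*$ on types by pullback of conditions and then check continuity directly from the definition of the topology on the relevant type spaces. Concretely, given a type $p \in S_{\widetilde{N}}$, I would let $I_*(p)$ be the set of $\L_2$-conditions $\psi(y) = 0$ such that the $\L_1$-condition $\psi^I(I(y)) = 0$, equivalently $(\psi \circ I)(y) = 0$, lies in $p$; here $\psi^I = \psi \circ I$ is the translation coming from the interpretation, which is an $\L_1$-definable predicate on $\widetilde{N}^y$ by the definition of interpretation (Definition~\ref{def:interpretation-continuous}) and its extension to Cartesian powers. First I would check that $I_*(p)$ really is a complete type in $S_N$: it is consistent because $p$ is realized in some elementary extension $\widetilde{N}'$ of $\widetilde{N}$ by an element $\widetilde{a}$, and then $I(\widetilde{a})$ realizes $I_*(p)$ in the corresponding extension $N'$ of $N$ (using that interpretations are preserved under elementary extension); and it is complete and maximal because $p$ is, since the translation $\psi \mapsto \psi^I$ commutes with the continuous connectives used to build formulas.

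The key technical input is the identification, recalled just before the statement, of $C(S_X,\mathbb{R})$ with the set of $\L$-formulas on an interpretable set $X$ via evaluation — this is the continuous-logic analogue of Stone duality. Using this, continuity of $I_*$ reduces to the following: for every $\L_2$-formula $\psi(y)$, the composite $S_{\widetilde{N}} \xrightarrow{I_*} S_N \xrightarrow{\ \psi\mapsto\psi^{p}\ } \mathbb{R}$ is continuous. But by construction of $I_*$, for $p \in S_{\widetilde{N}}$ the value of the formula $\psi$ at $I_*(p)$ equals the value of the $\L_1$-definable predicate $\psi^I$ at $p$; that is, the composite is exactly $p \mapsto (\psi^I)^p$, which is continuous by the very definition of the topology on $S_{\widetilde{N}}$ (Definition~\ref{def:type-space}), since $\psi^I$ is an $\L_1$-formula (or, invoking Remark~\ref{rem:uniform-limits}, a uniform limit of such, which is still realized as a connective applied to formulas and hence still gives a continuous function on the type space). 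Since the functions $p \mapsto \varphi^p$ for $\varphi$ ranging over $\L_2$-formulas generate the topology on $S_N$, this gives continuity of $I_*$.

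The main obstacle, and the point requiring the most care, is bookkeeping around the variable tuples: $y$ may be a countably infinite tuple, so $\widetilde{N}^y$ sits inside $(M^x)^y = M^{x\times y}$, and one must be sure that "quantification over the definable set $\widetilde{N}$" in the variables $x$ indexed by $y$ still produces a definable predicate — this is exactly the content cited from \cite[Theorem 9.17]{continuous-logic-book} and used in the proof of Proposition~\ref{prop:ec-to-ec}, so I would appeal to it here as well. A second subtlety is making sure $I_*$ is well-defined on types rather than merely on realized types, i.e. that two elements of $\widetilde{N}$ (possibly in extensions) with the same $\L_1$-type over the relevant parameters map under $I$ to elements with the same $\L_2$-type; this follows because the $\L_2$-type of $I(\widetilde{a})$ is determined by the values of the $\psi^I(\widetilde{a})$, which are determined by the $\L_1$-type of $\widetilde{a}$. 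Once these points are in place, everything else is formal, and I would leave the routine verifications that $I_*(p)$ is a complete type to a sentence or two.
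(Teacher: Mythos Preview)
Your proposal is correct and takes essentially the same approach as the paper: define \(I_*(p)\) via \(\varphi^{I_*(p)} = (\varphi^I)^p\) and verify continuity by noting that the preimage of each subbasic open \([r < \varphi < s]\) is \([r < \varphi^I < s]\). One minor point: your appeal to \cite[Theorem~9.17]{continuous-logic-book} is unnecessary here, since Definition~\ref{def:interpretation-continuous} already stipulates that \(\psi^I = \psi \circ I\) extends to an \(\L_1\)-definable predicate on \((M^x)^y\), so no separate quantification over the definable set \(\widetilde{N}\) is required.
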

\begin{proof}
For \(p \in S_{\widetilde{N}}\), we 
define \(I_*(p)\) by specifying 
\(\varphi^{I_*(p)} := (\varphi^I)^p\) 
for each \( \varphi \) an \(\L_2\)-formula
in the variable \(y\) ranging over \(N\).  
Continuity of \(I_*\) is an immediate consequence of the fact that
precomposition of \(I\) with a definable predicate is itself a definable
predicate:  the basic open subsets of \(S_N\) take the form \([r < \varphi < s]
:= \{ p \in S_N : r < \varphi^p < s \} \) for \( \varphi \) an \(\L_2\)-formula
in the variable \(y\) and \(r < s\) real numbers.  
The preimage of this set under \(I_*\) is \([r < \varphi^I < s ]\)  which is open in 
\(S_{\widetilde{N}}\).
\end{proof}

When 
\(I\) is half of 
a bi\hyp{}interpretation 
we may upgrade
Proposition~\ref{prop:interp-type} to the 
assertion that \(I_*\) is a 
homeomorphism. 
Such an assertion 
would fail if
we were to treat 
interpretations 
as in Remark~\ref{rem:alternative-interpretation}.

\begin{prop}
\label{prop:homeo-type-spaces}
Let \(I:\widetilde{N} \to N\) and 
\(J:\widetilde{M} \to M\) be a pair of 
interpretations forming a bi\hyp{}interpretation.
Then \(I_*:S_{\widetilde{N}} \to S_N\)
is a homeomorphism.
\end{prop}
\begin{proof}
We already
know that \(I_*\) is continuous.  Let us check that
it is open.  Let \(P:\widetilde{N} \to \mathbb{R}\) be a
definable predicate and let \(r < s\) be real numbers.  
By Proposition~\ref{prop:bi-interp-push}, 
\(P^{I^{-1}} := P \circ I^{-1} : N \to \mathbb{R} \) is
a definable predicate.  
From the definition of \(I_*\), we see that
\(I_* ([r < P < s]) = [r < P^{I} < s ]\), implying
that \(I_*\) is an open mapping, and, hence, 
a homeomorphism.
\end{proof}

Under an interpretation, the pullback of a definable set is a definable set.
It follows that in Proposition~\ref{prop:homeo-type-spaces} for any definable
set \(X \subseteq N^y \), there is a definable set \(\widetilde{X} := I^{-1} X\)
for which \(I\) induces a homeomorphism \(S_{\widetilde{X}} \to S_X\).  We may
go further.  
If \(\partial:X \times X \to \mathbb{R}_{\geq 0}\) is an \(N\) definable 
pseudometric on \(X\), then \(\partial^{I}\) is an \(M\) definable
pseudometric on \(\widetilde{X}\) and \(I\) induces a map between the 
interpretable structures \(\widetilde{X}/\partial^I\) and \(X/\partial\) 
obtained by quotienting by the induced equivalence relations.  
The induced map on the type spaces \(S_{\widetilde{X}/\partial^I} \to S_{X/\partial}\) is a homeomorphism.

\section{Metric valued fields}
\label{sec:MVF}

\begin{defn}
Let \(\Lrg\) denote the bounded continuous ring language with binary function symbols \(+\) and \(\cdot\), unary function symbol \(-\) and constants \(0,1\).
\end{defn}

The distance takes values in \([0,1]\) and all three symbols come with the modulus of continuity \(x\mapsto 2x\).

\begin{defn}
Let \((K,v)\) be a valued field which admits a rank one coarsening \(v_0 : K\to (\Rr,+)\). We consider the \(\Lrg\)-structure \((\Val(K),|x-y|,+,\cdot,-,0,1)\) where \(|x| = e^{-v_0(x)}\).
\end{defn}

\begin{lem}
The class \(\MVF\) of all such structures is elementary. It is axiomatized by:
\begin{itemize}
\item the (universal) theory of ultrametric domains;
\item the axiom \(\sup_{x,y}\inf_z \min(xz-y,yz-x) = 0\).
\end{itemize}
\end{lem}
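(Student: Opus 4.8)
The plan is to prove the two directions of the claimed axiomatization (throughout, a term $t$ appearing where a $[0,1]$-valued predicate is expected is read as $|t| = d(t,0)$). \emph{Soundness} — that each $\Lrg$-structure $(\Val(K),|x-y|,+,\cdot,-,0,1)$ attached to a valued field $(K,v)$ with a rank one coarsening $v_0$ satisfies the two axioms — is routine. On $\Val(K)$ the absolute value $|x| = e^{-v_0(x)}$ is bounded by $1$ (as $v_0$, being a coarsening of $v$, is non-negative there), is multiplicative, and satisfies $|x+y|\le\max(|x|,|y|)$, so $d(x,y)=|x-y|$ is a translation-invariant ultrametric of diameter $\le 1$ with $d(xy,0)=d(x,0)\,d(y,0)$, the operations are $1$-Lipschitz in each variable (so the declared moduli of continuity hold), and $\Val(K)$ is an integral domain — precisely the content of the universal theory of ultrametric domains. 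The divisibility axiom holds because $\Val(K)$ is a valuation ring: if $v(a)\ge v(b)$ then $z:=a/b$ (or $z:=0$ when $b=0$) lies in $\Val(K)$ with $bz=a$, so $\inf_z\min(|xz-y|,|yz-x|)$ is attained at $0$ for all $x,y$, and the supremum over $x,y$ vanishes.

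For the converse, let $M$ be a complete $\Lrg$-structure satisfying the axioms and write $|x|:=d(x,0)$. The ultrametric domain axioms say that $M$ is a commutative ring, that $|{\cdot}|$ is multiplicative with $|x|=0$ iff $x=0$, and that $|x+y|\le\max(|x|,|y|)$; in particular $M$ is a domain, and $v_0:=-\log|{\cdot}|$ is a valuation on $M$ that is non-negative and infinite only at $0$. Let $K:=\operatorname{Frac}(M)$ and extend $v_0$ to $K$ by $v_0(a/b):=v_0(a)-v_0(b)$; multiplicativity makes this well defined, giving a valuation of $K$ with values in $\Rr$, hence of rank $\le 1$, whose valuation ring contains $M$ since $|x|\le 1$ on $M$.

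The key step is that $M$ is \emph{itself} a valuation ring of $K$: it suffices to show that for $a,b\in M$ both nonzero, $a$ divides $b$ or $b$ divides $a$ in $M$. By the divisibility axiom there are $z_n\in M$ with $\min(|bz_n-a|,|az_n-b|)\to 0$; passing to a subsequence, we may assume $|bz_n-a|\to 0$ (the other case is symmetric). The ultrametric inequality and multiplicativity give $|b|\,|z_n-z_m|=|bz_n-bz_m|\le\max(|bz_n-a|,|a-bz_m|)\to 0$, so $(z_n)$ is $d$-Cauchy and, by completeness of $M$, converges to some $c\in M$ with $bc=a$ by continuity of multiplication. Hence $M=\Val_v(K)$ for the valuation $v$ that $M$ determines on $K$, the valuation $v_0$ is a rank one coarsening of $v$, and the $v_0$-metric on $\Val(K)=M$ equals $d$ by translation invariance; thus $M$ is exactly the $\Lrg$-structure attached to $(K,v)$ with coarsening $v_0$.

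The one non-routine point — the heart of the proof — is this passage from the \emph{approximate} divisors supplied by the $\inf_z$ in the second axiom to an \emph{exact} divisor inside $M$, which is where completeness of the metric structure is used and why the statement belongs to continuous rather than discrete logic. A minor point of bookkeeping for soundness is the convention that $\Lrg$-structures are complete metric spaces: $(\Val(K),\dots)$ is understood as its $v_0$-completion, and the $v_0$-completion of a valued field carrying a rank one coarsening again carries one, with the expected valuation ring, so soundness is unaffected.
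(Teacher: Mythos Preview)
Your proof is correct and follows essentially the same approach as the paper's: show soundness routinely, then for the converse show that a model is a valuation ring whose associated valuation is refined by \(v_0=-\log|\cdot|\). The one substantive detail you supply that the paper leaves implicit is the use of metric completeness to upgrade the approximate divisors furnished by the axiom \(\sup_{x,y}\inf_z \min(|xz-y|,|yz-x|)=0\) to exact divisors; the paper simply asserts ``any model \(R\) of that theory is a valuation ring'' without spelling this out, so your Cauchy-sequence argument is a genuine clarification rather than a different route.
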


\begin{proof}
Any structure as above is a model of that theory. Conversely, any model \(R\) of
that theory is a valuation ring and the open unit ball for \(\vert\cdot\vert\) is an ideal,
\emph{i.e.}\ \(\vert\cdot\vert\) is a (necessarily rank one) coarsening of the valuation
associated to \(R\).
\end{proof}

\begin{rem}
Let \((K,v)\) be a valued field. For every \(\gamma\in vK\), let \(\gamma\Val\)
denote \(\{x\mid v(x)\geq \gamma\}\). Fix some \(\gamma\in vK^\times\) and
consider the ring \(\Val_\gamma := \projlim_{n} \Val/\gamma^n \Val\) which is
naturally isomorphic to the residue field for the coarsening of \(v\) by the
smallest convex subgroup containing \(\gamma\).

Then, after an choice of normalization, \(\Val_\gamma\) is a model of \(\MVF\) and every model of
\(\MVF\) is obtained this way. So \(\MVF\) is the theory of a continuously
interpretable structure in the discrete theory of valued fields.
\end{rem}

\begin{defn}
Let \(\ACMVF\) be the \(\Lrg\)-theory of algebraically non-discrete metric valued fields which consists of \(\MVF\) and the axioms:
\begin{itemize}
    \item \(\inf_x \max\{|x| \dotminus 1/2,1/2 \dotminus |x|\} = 0\),
    \item \(\sup_x\inf_y |P(x,y)| = 0\), for any finite tuple of variables \(x\), any variable \(y\) and any polynomial \(P\in\mathbb{Z}[x,y]\) monic (non-constant) in the variable \(y\).
\end{itemize}

Fix \(p\) a prime.  We write
\( \ACMVF_{0,p} \) for the 
\(\Lrg\) theory of algebraically closed non-discrete metric valued fields of characteristic zero with residue field of characteristic zero and 
\( \ACMVF_p\) for the 
\(\Lrg\) theory of algebraically closed non-discrete metric valued fields of characteristic $p$. For fixed  \(\alpha \in (0,1)\) we let
\(\PERF_{|p|= \alpha}\) be the theory of (valuation rings of) mixed characteristic
\((0,p)\) perfectoid fields which consists of \(\MVF\) and the axioms:
\begin{itemize}
    \item \(|p| = \alpha\),
    \item \(\inf_x \max\{|x|\dotminus \alpha^{1/p},\alpha^{1/p} \dotminus |x|\}=0\),
    \item \(\sup_x\inf_y\inf_z |x - y^p - pz| = 0\).
\end{itemize}
Note that our axioms for mixed
characteristic perfectoid fields depend on a choice of normalization.

The theory \(\PERF_{|p|=0}\) is the theory of (valuation rings of)
characteristic \(p\) perfectoid fields  which consists of \(\MVF\) and the
axioms:
\begin{itemize}
    \item \(|p| = 0\),
    \item \(\inf_x \max\{|x|\dotminus 1/2,1/2 \dotminus |x|\}=0\),
    \item \(\sup_x\inf_y |x - y^p| = 0\).
\end{itemize}
Note that all models of \(\PERF_{|p|=0}\) are perfect.
\end{defn}

Let \(\rD(x,y)\) be the predicate \(\inf_{z\in \cO} |y-xz|\). Note that for every \(x,y\in\cO\),
\[|x - y\cO^\times| =
\left\{\begin{array}{ll}0& \text{if } v(x) = v(y)\\
\max(|x|,|y|)& \text{otherwise}\end{array}\right.
= \max(D(x,y),D(y,x)).\]

This is a pseudo metric whose associated imaginary is \(v\cO\) --- with some non
discrete norm. The induced topology is discrete at every point except for
\(v(0)\). A basis of neighborhoods of \(v(0)\) is given by upward closed sets.

Also, this shows that \(\cO^\times\) is definable, and since \(|x-\fm| =
1\dotminus d(x,\cO^\times)\), so is \(\fm\). The pseudo norm \(|x-y-\fm|\) is
thus also a predicate and the associated imaginary is the residue field \(Kv\)
with the discrete norm. Similarly, for any non-zero \(\varpi\in\Val\), the set
\(\varpi\Val\) is definable and the pseudo-norm \(|x-y-\varpi \Val|\) gives rise
to the imaginary \(\Val/(\varpi)\) with the norm induced by \(\vert\cdot\vert\) --- the
associated topology is trivial.

\begin{defn}
Let \(\LD\) denote \(\Lrg\cup\{\rD\}\).
\end{defn}

\begin{prop}\label{ACMVF EQ}
The theory \(\ACMVF\) eliminates quantifiers in \(\LD\) and is the model completion of \(\MVF\).
\end{prop}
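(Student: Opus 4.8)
The plan is to follow the classical strategy for proving quantifier elimination and model completeness for algebraically closed valued fields, adapted to the metric/continuous setting. First I would record the basic facts: every model of $\MVF$ is (essentially) the residue ring $\Val_\gamma$ of a rank-one-coarsenable valued field, so an embedding of $\Lrg$-structures between models of $\MVF$ is precisely an embedding of valuation rings compatible with the coarsened valuation (equivalently, with $|\cdot|$), and $\rD(x,y)$ just records whether $v(x) \le v(y)$, so $\LD$-embeddings are the same thing. Thus a substructure of a model of $\ACMVF$ is a valuation ring inside an algebraically closed valued field, and its field of fractions sits as a valued subfield.

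Next I would establish the key back-and-forth/amalgamation statement in the form appropriate for continuous QE: given a model $M \models \ACMVF$, a subset $A \subseteq M$, and two embeddings of the generated substructure into $M$ agreeing on $A$ — or, more usefully in the $\kappa$-saturated reformulation, given $M, N \models \ACMVF$ with $N$ sufficiently saturated, a common substructure $A$, and $a \in M$, I want to extend the embedding $A \hookrightarrow N$ to the substructure generated by $a$. The heart is therefore the usual extension lemma for valued fields: any valued-field embedding of a valued subfield $K$ of an algebraically closed valued field into a sufficiently saturated algebraically closed valued field extends to $K(a)$ for any $a$. One reduces to the two cases of (i) $a$ algebraic over $K$ — handled by the fact that extensions of a valuation to a finite extension are controlled and, over the algebraic closure, one can match up the unique extension after conjugating — and (ii) $a$ transcendental over $K$ — where one chooses the "cut" of $v(a)$ in the value group and the residue of $a/$ leading term appropriately, using density of the value group in $\Rr$ (that is, the non-discreteness axiom) and density/richness of the residue field, both guaranteed by saturation of the target. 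The only genuinely continuous-logic twist is that I must be careful that the data I am transferring is exactly the data coded by $\Lrg \cup \{\rD\}$: the metric $|x-y|$ and the predicate $\rD$; since $|\cdot|$ is the rank-one coarsening and $\rD$ recovers the full valuation ordering, this matches the classical invariants, so the classical Abhyankar-type extension argument goes through verbatim.

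From this extension lemma I would deduce QE in the standard way: by a routine induction (or by the saturation criterion, e.g. the continuous-logic analogue of the test that any isomorphism between small substructures of saturated models is approximately elementary), two tuples in models of $\ACMVF$ with the same quantifier-free $\LD$-type over $\emptyset$ have the same type; equivalently every $\LD$-formula is, modulo $\ACMVF$, a uniform limit of quantifier-free $\LD$-formulas, which is exactly continuous-logic QE. For model completeness of $\MVF$, note $\ACMVF \supseteq \MVF$, every model of $\MVF$ embeds into a model of $\ACMVF$ (pass to the algebraic closure of the fraction field with an extended valuation, then take the appropriate $\Val_\gamma$, using that the non-discreteness and monic-root axioms can be arranged — this is where the "algebraically non-discrete" hypothesis is used), and by QE any embedding between models of $\ACMVF$ is elementary; together these give that $\ACMVF$ is the model completion of $\MVF$.

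The main obstacle I expect is the transcendental extension step done correctly in the metric setting: one must extend the valuation on $K$ to $K(a)$ with a prescribed value $v(a) \in \Rr$ (or a prescribed $\rD$-type, i.e. a prescribed position in the value group together with whatever residue data is forced), and then realize this prescribed type in the saturated target $N$; this requires knowing that the value "group" as seen through $|\cdot|$ is dense in $\Rr_{>0}$ and that the residue field is algebraically closed and large, both of which follow from the $\ACMVF$ axioms plus saturation, but one has to check that no hidden finitary approximation obstruction arises — i.e. that the $\LD$-type one wants to realize is genuinely consistent and finitely satisfiable in $N$. Once that realization is in hand, the remaining bookkeeping (closing off under the ring operations, verifying the embedding is an $\LD$-embedding, iterating) is routine, exactly as in the discrete ACVF case treated in, e.g., standard references.
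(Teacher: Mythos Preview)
Your plan is sound and would succeed, but it takes a different route from the paper's proof. You propose to run the back-and-forth extension argument (algebraic versus transcendental, Abhyankar/Kaplansky-style) directly in the continuous setting. The paper instead \emph{reduces to the classical discrete quantifier elimination for} $\ACVF$: given a $\kappa$-saturated $M\models\ACMVF$ with fraction field $K$, it produces a $\kappa$-saturated discrete model $N_1\models\ACVF$ via the Hahn series field $K((\Gamma))$ for a suitable saturated divisible ordered group $\Gamma$, arranged so that $\cO(M)$ is a section of $\cO(N_1)\to\cO(N_1)/I_M$ where $I_M=\{x:v(x)>v(M)\}$. Since $\rD(x,y)=0$ is exactly the $\Ldiv$-divisibility predicate, any $\LD$-embedding $f:A\to N$ becomes an $\Ldiv$-embedding, which classical $\ACVF$ QE extends to $g:M\to N_1$; then $g(M)\cap I_N=\{0\}$ forces $g$ to descend to an $\LD$-embedding $M\to N$. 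The paper also explicitly handles the case where $\vert\cdot\vert$ is trivial on $A$ by first adjoining an element of intermediate norm before invoking $\ACVF$ QE.

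What each approach buys: the paper's reduction is short and avoids reproving anything about valued-field extensions, at the cost of the auxiliary Hahn-series lemma. Your direct approach is self-contained but requires more care than your sketch indicates---in particular, the transcendental step must cover the immediate (pseudo-Cauchy) case, not only the value-transcendental and residue-transcendental cases you mention, and you must remember that $\rD$ encodes the full (possibly higher-rank) valuation while $\vert\cdot\vert$ is only its rank-one coarsening, so the invariants to be matched are those of the fine valuation. None of this is an obstruction, but ``goes through verbatim'' undersells the bookkeeping.
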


Let \(\Ldiv\) denote the
discrete language of rings with a divisibility predicate interpreted in valued
fields as \(v(x)\leq v(y)\). Recall the (discrete) \(\Ldiv\)-theory \(\ACVF\) of non-trivially valued algebraically closed fields eliminates quantifiers.

\begin{proof}
First note that in any \(\aleph_0\)-saturated \(K \models\ACMVF\), we have
\(|\cO| = [0,1]\). Indeed, if \(a\in\cO\) is such that \(|a|\in (0,1)\), then
\(\Qq\cdot |a|\subseteq |\cO|\) is dense in \([0,1]\).

\begin{claim}
Let \(M\models\ACMVF\) be \(\kappa\)-saturated (with induced valuation \(v\)).
There exists a \(\kappa\)-saturated \(N\models\ACVF\) containing \((M,v)\) such
that \(\cO(M)\) is a section of the map \(\cO(N) \to \cO(N)/ I_M\) where
\(I_M:=\{x\in \cO(N)\mid v(x) > v(M)\}\). 
\end{claim}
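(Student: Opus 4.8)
The plan is to build the valued field $N$ by a transfinite/iterative construction that adds, one at a time, an element of valuation strictly between $v(M)$ and the "next" value, while ensuring at each stage that $\cO(M)$ continues to map bijectively onto the quotient by the corresponding ideal of "infinitesimally small" elements. The key structural observation is this: the metric $|\cdot|$ on $\cO(M)$ only sees the rank-one coarsening $v_0$ of $v$, so a type over $M$ in the sense of $\ACMVF$ does not pin down the finer valuation-theoretic data. Concretely, if $\gamma \in v(M)$ generates a convex subgroup that is not all of $v(M)$, one can realize in an $\ACVF$-extension an element whose $v$-value is "just below" the upward closure of $\gamma$ while its $v_0$-value (hence its norm) equals that prescribed by any chosen element of $M$; this is the content of the remark preceding \cref{ACMVF EQ} identifying $\cO_\gamma$ with a residue field of a coarsening. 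So the first step is to fix an embedding of $(M,v)$ into some $N_0 \models \ACVF$ and set $I_M := \{x \in \cO(N_0) : v(x) > v(M)\}$, then check that $\cO(M) \to \cO(N_0)/I_M$ is injective (immediate: if $x,y \in \cO(M)$ and $v(x-y) > v(M)$ then $v(x-y) > v(x-y)$ unless $x=y$) but possibly not surjective nor with $N_0$ saturated.

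Next I would run the standard saturation bookkeeping. To make $N$ be $\kappa$-saturated as an $\ACVF$-structure, one takes an elementary extension; the issue is that a naive elementary extension of $N_0$ might enlarge the pair $(M,v)$ or destroy the section property. The remedy is to be careful about which types we realize: we only need to realize $\ACVF$-types over subsets of size $<\kappa$ of the field we are building, and at each step we may choose a realization whose valuation lies in the divisible hull of the value group constructed so far, arranged so that $v(M)$ remains "cofinal from below" in the appropriate sense — i.e. no newly added positive element drops below $v(M)$ in a way that would create an element of $\cO(N)$ not congruent mod $I_N$ to an element of $\cO(M)$. Here one uses that $M$ is itself $\kappa$-saturated as an $\ACMVF$-model: given $a \in \cO(N)$ with $v(a) \in [0, v(M))$ (i.e. $a \notin I_N$), saturation of $M$ lets us find $m \in \cO(M)$ with $|m| = |a|$, i.e. $v_0(m) = v_0(a)$; and since $v(a) \not> v(M)$, one arranges by the construction that in fact $v(m-a) > v(M)$, so $a \equiv m \pmod{I_N}$. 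Surjectivity of the section map onto $\cO(N)/I_N$ follows.

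The main obstacle — and where the real work lies — is controlling the value group across the transfinite construction so that two competing demands are simultaneously met: (i) $v(N)$ must be divisible and the construction must realize enough $\ACVF$-types for $\kappa$-saturation of $N$, while (ii) the "gap" above $v(M)$ must be preserved, meaning precisely that for every $a \in \cO(N) \setminus I_N$ there is $m \in \cO(M)$ with $a - m \in I_N$. Demand (ii) is really the statement that the convex subgroup $C := \{\delta \in v(N) : -v(M) \le \delta \le v(M)\}$ — wait, more precisely the quotient $v(N)/H$, where $H$ is the convex subgroup of elements $> v(M)$, together with its ordered-group and divisibility structure, should be "the same" as seen from $M$: the residue field of the $H$-coarsening of $N$ should equal (a copy of) $\cO(M)$ modulo its corresponding ideal. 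I would establish this by induction, showing the invariant "$\cO(M) \twoheadrightarrow \cO(N_i)/I_{N_i}$" is maintained at successor steps (the delicate case, handled by the $\ACMVF$-saturation argument above, choosing the new element's valuation to avoid landing in $C \setminus \{0\}$ modulo $H$ unless its residue is already hit by $M$) and at limit steps (a routine union, using that a directed union of sections is a section). Finally, one invokes \cref{ACMVF EQ} — quantifier elimination for $\ACMVF$ in $\LD$ — to note that the metric-valued-field structure $\cO(M)$ sits inside $\cO(N)$ as an $\LD$-substructure with $\rD$ computed correctly, which is what lets the claim be used downstream.
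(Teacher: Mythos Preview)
Your transfinite construction has a genuine gap at the key step. To maintain the section property you need, for each $a \in \cO(N) \setminus I_M$, an $m \in \cO(M)$ with $v(a-m) > v(M)$. The argument you give --- use $\ACMVF$-saturation of $M$ to find $m$ with $|m| = |a|$ --- only yields $v_0(m) = v_0(a)$, equality of the rank-one coarsened values; it says nothing about $v(a-m)$ lying above $v(M)$. The phrase ``one arranges by the construction'' is where the work should be, and it is not done: when you realize an $\ACVF$-type over parameters already built into $N_i$ (not merely over $M$), that type can force the new element to have a prescribed residue modulo $I_M$ that is \emph{not} hit by $\cO(M)$, and you cannot then ``choose the new element's valuation to avoid'' this without failing to realize the type. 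So the invariant is not shown to survive successor stages, and the construction does not go through. (A minor additional point: you invoke \cref{ACMVF EQ} at the end, but the Claim is a step inside the proof of that proposition, so this is circular.)

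The paper sidesteps all of this with a direct construction: take $K$ the fraction field of $\cO(M)$, pick a $\kappa$-saturated divisible ordered abelian group $\Gamma$, and set $N := K((\Gamma))$ with the lexicographic valuation $w\bigl(\sum_{i \ge i_0} a_i t^i\bigr) = (v(a_{i_0}), i_0) \in v(K) \times \Gamma$. Then $I_M$ is exactly the ideal of series with strictly positive $\Gamma$-support, so $\cO(N)/I_M \cong \cO(K) = \cO(M)$ with the constants as the section; and $N$ is $\kappa$-saturated because it is maximally complete with $\kappa$-saturated value group and residue field. No bookkeeping, no invariant to maintain --- the Hahn-series shape builds the section in from the start.
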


\begin{proof}
Let \(K\) be the fraction field of \(\cO(M)\), let \(\Gamma\) be a \(\kappa\)-saturated divisible ordered abelian group and let \(N := K((\Gamma))\) with the valuation \(w(\sum_{i\geq i_0} a_i t^i) = (v(a_{i_0}),i_0) \in v(K)\times \Gamma\) if \(a_{i_0}\neq 0\). One can check that \(w(N)\) is a \(\kappa\)-saturated divisible ordered abelian group. The residue field of \((N,w)\) is isomorphic to the residue field of \((K,v)\) which is \(\kappa\)-saturated. Since \(N\) is maximally complete, \(N\) is \(\kappa\)-saturated.
\end{proof}

Let now \(M,N\models\ACMVF\), let \(A\leq M\) and \(f : A\to N\) be an \(\LD\)-embedding.
Assume that \(N\) is \(|M|^+\)-saturated. Let \(N_1\supseteq N\) be as in the
claim. If \(\vert\cdot\vert\) is discrete on \(A\), let \(c\in M\) be such that \(|c| \in
(0,1)\) and \(d\in N\) such that \(|d| = |c|\) --- note that \(f\) then extends
to an \(\Ldiv\)-embedding sending \(v(c)\) to \(v(d)\). Since \(D(x,y) = 0\)
defines the usual divisibility predicate of \(\ACVF\), by quantifier
elimination, \(f\) extends to an \(\Ldiv\)-embedding \(g : M \to N_1\) ---
sending \(v(c)\) to \(v(d)\) if \(A\) is discrete. Since \(g(M)\cap I_N =
g(M\cap I_M) = \{0\}\), this embedding induces an \(\LD\)-embedding \(h : M
\to N\) extending \(f\). Quantifier elimination (and the rest of the statement)
follows.
\end{proof}

\begin{rem}
A similar reduction shows quantifier elimination for \(\ACMVF\) in the three
sorted language where we add the value group and residue field.
\end{rem}

Let us conclude this section by noting the relationship between the discrete
structure and the continuous structure of a perfectoid field. This relies on a
deep result of Jahnke and Kartas \cite{JahKar-Perf}.

Fix \(\alpha < 1\) and let \(M\models \PERF_{|p|=\alpha}\) with associated
valuation \(v\). Let us assume that \(v\) is henselian and \(v(M)\) has bounded
regular rank. An ordered group has bounded regular rank if all definable convex
subgroups (in elementary extensions) are \(\emptyset\)-definable --- these
groups are also known as groups with finite spines. Note that if \(v\) is a rank
one valuation, \emph{i.e.} coincides with \(-\log \vert\cdot\vert\), then both hypotheses are
verified.

\begin{lem}
\label{bd rk div}
Let \(\Gamma \preccurlyeq \Gamma'\) be an elementary extension of bounded
regular rank groups (in the discrete language of ordered groups). Let \(\Delta\)
be the convex hull of \(\Gamma\) in \(\Gamma'\). Then \(\Gamma'/\Delta\) is
divisible.
\end{lem}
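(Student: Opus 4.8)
The plan is to reduce the statement to the structure theory of ordered abelian groups of bounded regular rank, combined with the classical fact that a \emph{regular} ordered abelian group --- one elementarily equivalent to an archimedean group, equivalently one whose quotient by every nonzero convex subgroup is divisible --- has divisible quotients by \emph{all} of its nonzero convex subgroups. The geometric heart of the argument is the observation that the convex hull \(\Delta\) of \(\Gamma\) lies strictly above the largest proper \(\emptyset\)-definable convex subgroup of \(\Gamma'\), hence inside the topmost, regular, ``floor'' of \(\Gamma'\).

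First I would recall that bounded regular rank entails that \(\Gamma\) has only finitely many \(\emptyset\)-definable convex subgroups, say \(\{0\} = \Gamma_0 \subsetneq \Gamma_1 \subsetneq \cdots \subsetneq \Gamma_n = \Gamma\), and that each successive quotient \(\Gamma_{i+1}/\Gamma_i\) is regular (the case \(\Gamma = \{0\}\), hence \(\Gamma' = \{0\}\), being trivial, so assume \(n \geq 1\)). Writing \(\theta_i(x)\) for a formula defining \(\Gamma_i\), the assertions that \(\theta_i\) defines a convex subgroup, that \(\theta_0\) defines \(\{0\}\) and \(\theta_n\) defines the whole group, that the inclusions are strict, and that each \(\Gamma_{i+1}/\Gamma_i\) is regular, are all first-order in the language of ordered groups; since \(\Gamma \preccurlyeq \Gamma'\), the sets \(\Gamma_i' := \theta_i(\Gamma')\) therefore form a chain \(\{0\} = \Gamma_0' \subsetneq \cdots \subsetneq \Gamma_n' = \Gamma'\) of convex subgroups of \(\Gamma'\) with \(\Gamma_i' \cap \Gamma = \Gamma_i\) and each \(\Gamma_{i+1}'/\Gamma_i'\) regular.

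Next I would locate \(\Delta\): it is a convex subgroup of \(\Gamma'\), being the convex hull of a subgroup, and since \(\Gamma_{n-1} = \Gamma \cap \Gamma_{n-1}' \subsetneq \Gamma\) there is some \(\gamma \in \Gamma \setminus \Gamma_{n-1}'\); as \(\gamma \in \Gamma \subseteq \Delta\), this shows \(\Delta \not\subseteq \Gamma_{n-1}'\), and because the convex subgroups of an ordered abelian group are linearly ordered by inclusion we conclude \(\Gamma_{n-1}' \subsetneq \Delta \subseteq \Gamma_n' = \Gamma'\). Finally, put \(R := \Gamma'/\Gamma_{n-1}'\), which is regular by the previous paragraph, and \(\overline{\Delta} := \Delta/\Gamma_{n-1}'\), which is a nonzero convex subgroup of \(R\); regularity of \(R\) gives that \(R/\overline{\Delta}\) is divisible, and \(R/\overline{\Delta} \cong \Gamma'/\Delta\), so \(\Gamma'/\Delta\) is divisible.

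The genuinely substantive input is the appeal in the second paragraph to the structure theory of groups of bounded regular rank (finite spines): pinning down a clean reference and the precise form of the decomposition --- in particular that the successive quotients are regular, and not merely \(p\)-regular for each prime separately --- is where I expect the main work to lie. Everything after that is elementary bookkeeping with convex subgroups and the transfer of regularity along the elementary extension \(\Gamma \preccurlyeq \Gamma'\).
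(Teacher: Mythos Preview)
Your argument is correct but follows a different path from the paper's. The paper proceeds by direct contradiction: if some \(\gamma \in \Gamma'\) and \(n \geq 1\) satisfy \(\gamma \notin \Delta + n\Gamma'\), then the largest convex subgroup \(H \subseteq \Gamma'\) with \(\gamma \notin H + n\Gamma'\) is definable (this is \cite[Lemma~2.1]{CluHal-OAG}); by the bounded-regular-rank hypothesis \(H\) must be \(\emptyset\)-definable, yet \(\Gamma \subseteq \Delta \subseteq H \subsetneq \Gamma'\), so the defining formula for \(H\) holds on all of \(\Gamma\) and hence, by elementarity, on all of \(\Gamma'\), forcing \(H = \Gamma'\) --- a contradiction.

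You instead invoke the structure theorem for groups with finite spines (the finite chain \(0 = \Gamma_0 \subsetneq \cdots \subsetneq \Gamma_n = \Gamma\) with regular successive quotients), transfer it along \(\Gamma \preccurlyeq \Gamma'\), locate \(\Delta\) strictly above \(\Gamma'_{n-1}\), and finish via the divisibility-of-quotients characterisation of regularity. This is more structural and makes the geometry of the situation transparent, at the cost of importing more background. The two approaches ultimately share the same nontrivial input: the Cluckers--Halupczok definability lemma is exactly what underlies the fact that consecutive quotients in the spine decomposition have no proper nontrivial definable convex subgroups and are therefore regular, so you have in effect packaged that lemma inside the structure theorem rather than invoking it directly. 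Your honest flag that pinning down the decomposition is ``where the main work lies'' is apt; once that reference is in hand, everything else in your argument is indeed elementary bookkeeping.
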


\begin{proof}
Otherwise, if \(\gamma\nin \Delta +
n\Gamma\), then the largest convex subgroup \(H\) such that \(\gamma\nin H +
n\Gamma\) is definable in \(\Gamma'\) (see \cite[Lemma~2.1]{CluHal-OAG}) but it
cannot be \(\emptyset\)-definable as \(\Gamma \leq \Delta\leq H < \Gamma'\).
\end{proof}

Let \(\fU\) be a non-principal ultrafilter. Let \(M^\fU\) denote the discrete
ultrapower of \(\Ldiv\)-structure and let \(M^\fU_0\) denote the continuous
ultrapower of \(\Lrg\)-structures. If \(\varpi\in \Val(M)\setminus\{0\}\) is
topologically nilpotent, then \(M^\fU_0\) is isomorphic to the residue field of
\(\Val(M^\fU)[1/\varpi]\), whose valuation we denote \(v_0\). Also,
\(v(M^\fU_0)\) is the convex hull of \(v(M)\) inside \(v(M^\fU)\).

By \cite[Theorem~4.3.1]{JahKar-Perf}, \((M^\fU,v_0)\) is henselian with perfect
residue field. So we can find a lift \(f : M^\fU_0\to M^\fU\). It is also an
embedding of valued fields.

\begin{prop}
\label{Ldiv vs Lrg}
The \(\Ldiv\)-embedding \(f : (M^\fU_0,v)\to (M^\fU,v)\) is elementary.
\end{prop}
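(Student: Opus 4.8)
The plan is to deduce elementarity of the $\Ldiv$-embedding $f\colon(M^\fU_0,v)\hookrightarrow(M^\fU,v)$ from the Ax--Kochen--Ershov machinery, using the henselianity with perfect residue field provided by \cite[Theorem~4.3.1]{JahKar-Perf}. First I would set up the three-sorted (valued field, value group, residue field) picture for both $(M^\fU_0,v_0)$ and $(M^\fU,v_0)$. The residue field of $(M^\fU,v_0)$ is by construction $M^\fU_0$, and the lift $f$ identifies $M^\fU_0$ with a subfield of $M^\fU$; so the residue field inclusion is the identity on $M^\fU_0$ and hence trivially elementary. The key point will then be the value groups: I need to show that $v(M^\fU_0)\preccurlyeq v(M^\fU)$ as ordered abelian groups and, in fact, that the relevant quotient is nicely behaved. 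Here $v(M^\fU_0)$ is the convex hull $\Delta$ of $v(M)$ inside $v(M^\fU)$, and $v(M^\fU)$ is an elementary extension (a discrete ultrapower) of $v(M)$. Since $v(M)$ has bounded regular rank, $\Delta=v(M^\fU_0)\preccurlyeq v(M^\fU)$ (the convex hull of an elementary substructure in a bounded-regular-rank group is again an elementary substructure — all definable convex subgroups being $\emptyset$-definable, the convex hull is an elementary substructure), and by Lemma~\ref{bd rk div} the quotient $v(M^\fU)/\Delta$ is divisible.

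Next I would invoke an appropriate Ax--Kochen--Ershov relative completeness / relative embedding theorem for henselian valued fields. The situation divides into the mixed-characteristic and equicharacteristic cases: $(M^\fU,v_0)$ has residue field $M^\fU_0$, which has characteristic $p$ (being a perfectoid field of characteristic $p$, namely essentially the tilt), while $M^\fU$ itself has characteristic $0$; so this is the mixed-characteristic, perfect-residue-field setting. The relevant AKE statement is the one for henselian valued fields of mixed characteristic with perfect residue field — either in the form due to work building on Kuhlmann's theory of tame/perfectoid fields, or one can reduce to the fact that such fields are elementarily equivalent (relative to residue field and value group) to their "standard" models. Concretely: since $(M^\fU,v_0)$ is henselian with perfect residue field $M^\fU_0$, value group $v(M^\fU)$, and since $M^\fU_0$ with the induced rank-one valuation from $v(M^\fU_0)/($its maximal proper convex subgroup$)$ recovers the perfectoid structure, the extension $(M^\fU_0,v_0)\subseteq(M^\fU,v_0)$ is an extension of henselian valued fields with the \emph{same} residue field (namely $M^\fU_0$ sits inside $M^\fU$ as a coefficient field lifting the residue field) and with an elementary extension of value groups whose quotient is divisible. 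By the AKE relative-completeness theorem in this setting, such an extension is elementary.

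The main obstacle I anticipate is making precise \emph{which} AKE theorem applies and verifying its hypotheses — in particular that we are genuinely in a setting (tame/perfectoid, or henselian with perfect residue field and the right divisibility of value-group quotients) where relative completeness holds for mixed characteristic, since the classical Ax--Kochen--Ershov theorem is restricted to equicharacteristic $0$ or to asymptotic statements. I expect this to be handled by citing the perfectoid/tame-field version of AKE (as developed by Kuhlmann and by Jahnke--Kartas and collaborators), whose hypotheses — henselian, defectless, perfect residue field, together with the value-group condition that $v(M^\fU)/v(M^\fU_0)$ is divisible — are exactly what we have assembled. A secondary technical point is checking that $f$ is not merely an embedding of valued fields but compatible with the three-sorted structure, i.e.\ that the induced maps on value groups and residue fields are the inclusions described above; this is immediate from the construction of $f$ as a lift of the identity on the residue field $M^\fU_0$ and from the identification of value groups via the convex-hull description, so it should not require more than unwinding definitions.
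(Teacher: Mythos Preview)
Your strategy is correct and is exactly the paper's: invoke the AKE-type transfer principle for henselian perfectoid/tame fields from \cite{JahKar-Perf} (the paper simply cites Theorem~5.1.4 there) and supply the value-group input via Lemma~\ref{bd rk div}. The paper's entire proof is those two citations.

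However, your setup conflates the two valuations $v$ and $v_0$. The proposition concerns the embedding $(M^\fU_0,v)\hookrightarrow(M^\fU,v)$ in $\Ldiv$; the relevant three-sorted data are therefore the $v$-value groups $\Delta=v(M^\fU_0)\subseteq v(M^\fU)$ (whose quotient is divisible by Lemma~\ref{bd rk div}) and the $v$-residue fields, which are both the discrete ultrapower of the residue field of $(M,v)$ since that sort carries the discrete metric. You instead compute the $v_0$-residue field (namely $M^\fU_0$) while simultaneously using the $v$-value groups, and you assert that the value group of $(M^\fU,v_0)$ is $v(M^\fU)$ when it is $v(M^\fU)/\Delta$. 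Separately, $M^\fU_0$ is the continuous ultrapower of $M$, not its tilt: it has the same characteristic as $M$, so when $0<\alpha<1$ it has characteristic $0$, and your identification of the relevant AKE case as ``mixed characteristic with characteristic-$p$ residue field'' is off. None of this undermines the approach --- replacing $v_0$ by $v$ throughout and dropping the tilt remark yields a coherent argument that reproduces what Jahnke--Kartas prove --- but as written the pieces do not fit together.
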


\begin{proof}
This follows immediately from \cite[Theorem~5.1.4]{JahKar-Perf}. Note that,
looking at the proof of \cite[Theorem~5.1.2]{JahKar-Perf}, the assumption that
\(v(M^\fU_0) \preccurlyeq v(M^\fU)\) can be lifted given \cref{bd rk div}.
\end{proof}

\begin{cor}
\label{Ldiv equiv Lrg}
Let \(M,N\models \PERF_{|p|=\alpha}\) whose valuation is henselian and whose value group has bounded regular rank. 
\begin{enumerate}
\item If \(M\) and \(N\) are \(\Lrg\)-elementarily equivalent then they are
\(\Ldiv\)-elementarily equivalent.
\item Let \(\varpi_M\in \Val(M)\) (resp. \(\varpi_N\in \Val(N)\)) be such that
\(0 < |\varpi_M| = |\varpi_N| < 1\). If \((M,\varpi_M)\) and \((N,\varpi_N)\)
are \(\Ldiv\)-elementarily equivalent, they are also \(\Lrg\)-elementarily
equivalent.
\end{enumerate}
\end{cor}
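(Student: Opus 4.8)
The plan is to deduce both statements from \cref{Ldiv vs Lrg} together with the observation that the continuous ($\Lrg$) structure is interpretable in the discrete ($\Ldiv$) structure, as recorded in the remark following the axiomatization of $\MVF$. For part (1): suppose $M$ and $N$ are $\Lrg$-elementarily equivalent models of $\PERF_{|p|=\alpha}$ with henselian valuation of bounded regular rank. First I would pass to an ultrapower by a common non-principal ultrafilter $\fU$ over a suitable index set; by \L o\'s's theorem in both logics we obtain $M^\fU$ (a discrete $\Ldiv$-structure) and $M^\fU_0$ (a continuous $\Lrg$-structure), and likewise $N^\fU$, $N^\fU_0$. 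Since $M \equiv_{\Lrg} N$, the continuous ultrapowers $M^\fU_0$ and $N^\fU_0$ are $\Lrg$-elementarily equivalent; choosing $\fU$ so that they are in fact saturated enough, they are $\Lrg$-isomorphic (the theory $\PERF_{|p|=\alpha}$ has a unique separable model of each suitable density character, or one can simply invoke Keisler--Shelah in the continuous setting). By \cref{Ldiv vs Lrg}, the lift $f : M^\fU_0 \to M^\fU$ is an elementary $\Ldiv$-embedding, and similarly for $N$. Hence $M^\fU$ and $N^\fU$ each contain an elementary substructure $\Ldiv$-isomorphic to $M^\fU_0 \cong N^\fU_0$; chasing through, $M^\fU \equiv_{\Ldiv} N^\fU$, and therefore $M \equiv_{\Ldiv} N$.

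For part (2): now assume $(M,\varpi_M) \equiv_{\Ldiv} (N,\varpi_N)$ with $0 < |\varpi_M| = |\varpi_N| < 1$, so in particular $\varpi_M$, $\varpi_N$ are topologically nilpotent. The key point is that the continuous structure $M^\fU_0$ is, as explained just before \cref{Ldiv vs Lrg}, identified with the residue field of $\Val(M^\fU)[1/\varpi_M]$ equipped with its norm, and this identification is uniformly $\Ldiv$-definable from the parameter $\varpi_M$ (one coarsens by the smallest convex subgroup containing $v(\varpi_M)$, takes the residue ring, and reads off the metric from the pseudo-norm $|x - y - \varpi\Val|$ discussed in the text). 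Thus the $\Lrg$-structure on $M^\fU_0$ is interpreted in $(M^\fU, \varpi_M)$ in a way that depends only on the $\Ldiv$-theory of $(M,\varpi_M)$. Since $(M^\fU,\varpi_M) \equiv_{\Ldiv} (N^\fU,\varpi_N)$, the interpreted $\Lrg$-structures $M^\fU_0$ and $N^\fU_0$ are $\Lrg$-elementarily equivalent, hence (again passing to a big enough ultrafilter and using uniqueness of saturated models / Keisler--Shelah) isomorphic, whence $M \equiv_{\Lrg} N$.

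The main obstacle I anticipate is the passage from elementary equivalence to the isomorphism of ultrapowers needed to splice the two logics together: in the continuous setting one must be careful that the ultrapower is sufficiently saturated and that the relevant completions of the theory are $\aleph_0$-categorical in an appropriate density character, or else invoke the continuous Keisler--Shelah theorem to replace "$\equiv$" by "has isomorphic ultrapowers" before applying \cref{Ldiv vs Lrg}. A secondary subtlety is checking that the interpretation of $M^\fU_0$ inside $(M^\fU,\varpi_M)$ really is the \emph{continuous} ultrapower $M^\fU_0$ and not merely some elementarily equivalent structure --- this is exactly where one uses that the discrete ultrapower commutes with the coarsening-and-residue construction, i.e. that $\Val(M^\fU)[1/\varpi]$ has residue field $(M^\fU_0)$, together with \cite[Theorem~4.3.1]{JahKar-Perf} guaranteeing henselianity of the coarsened valuation and hence that the residue field embeds back. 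Once these points are in place, the rest is the routine diagram chase indicated above.
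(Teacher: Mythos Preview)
Your approach to part~(1) matches the paper's exactly: invoke the continuous Keisler--Shelah theorem to obtain $M^\fU_0 \cong N^\fU_0$ as $\Lrg$-structures, then use \cref{Ldiv vs Lrg} to see that each embeds $\Ldiv$-elementarily into the corresponding discrete ultrapower, whence $M^\fU \equiv_{\Ldiv} N^\fU$ and so $M \equiv_{\Ldiv} N$.

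For part~(2), your argument is correct but more elaborate than necessary. The paper observes directly that the $\Lrg$-structure on $M$ itself (not on $M^\fU_0$) is continuously interpretable in the $\Ldiv$-structure $(M,\varpi_M)$: this is the remark that $\Val_\gamma = \projlim_n \Val/\gamma^n\Val$, with its induced norm, is continuously interpretable in the discrete valued field, and since $M$ is already a complete metric space with $0<|\varpi_M|<1$, this interpreted structure \emph{is} $M$ (the $\varpi_M$-adic and metric topologies coincide, and the normalizations match by hypothesis). Hence $(M,\varpi_M)\equiv_{\Ldiv}(N,\varpi_N)$ immediately yields $M\equiv_{\Lrg} N$, with no ultrapowers or second appeal to Keisler--Shelah. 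Your detour through $M^\fU$ and $M^\fU_0$ works, but the ``secondary subtlety'' you flag --- identifying the interpreted structure inside $M^\fU$ with the continuous ultrapower $M^\fU_0$ --- simply does not arise in the direct argument.
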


\begin{proof}
Let us first assume that \(M\) and \(N\) are equivalent as \(\Lrg\)-structure.
Then for some non-principal ultrafilter \(\fU\), by the Keisler-Shelah theorem,
the \(\Lrg\)-ultrapowers  \(M^\fU_0\) and \(N^\fU_0\) are isomorphic. By
\cref{Ldiv vs Lrg}, the \(\Ldiv\)-ultrapowers  \(M^\fU\) and \(N^\fU\) are
elementarily equivalent, and hence so are the \(\Ldiv\)-structures \(M\) and
\(N\).

The second item follows from the fact that, as noticed above, the
\(\Lrg\)-structure is continuously interpretable in the \(\Ldiv\)-structure
(with parameter \(\varpi\)).
\end{proof}

\begin{rem}
\begin{enumerate}
\item Under the hypotheses of \cref{bd rk div}, we can prove that
\(\Delta\leq\Gamma'\) is elementary. \Cref{Ldiv equiv Lrg} would hold for any
class of value groups with this property. However, we do not know if this holds
beyond bounded regular rank.
\item It is not clear either if assuming that the valuation is henselian is
necessary in \Cref{Ldiv equiv Lrg}.
\end{enumerate}
\end{rem}

\section{Bi-interpretations}
\label{sec:bi-interpretations}

%Check definitions of interpretations (pseudonorm then definable set).

If \(R\) is a ring, let \(R^\flat = \projlim_{x\mapsto x^p} R\). A priori it is
only a multiplicative monoid, unless \(R\) has characteristic \(p\), in which
case it is a ring.

\begin{lem}
\label{crucial}
Let \(M\models \PERF_{|p| = \alpha}\) for some \(\alpha < 1\). Let \(\varpi\in
\Val(M)\setminus\{0\}\) be topologically nilpotent divisor of \(p\). The natural
map \(f: \Val(M)^\flat \to (\Val(M)/(\varpi))^\flat\) is a bijection. The
inverse map is given by \(g : x\mapsto (\lim_{j} \tilde{x}_{i+j}^{p^j})_i\),
where \(\tilde{x}_{i}\) reduces to \(x_i\) mod \(\varpi\).
\end{lem}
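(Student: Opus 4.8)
The plan is to verify directly that the explicit map $g$ is a well-defined two-sided inverse of $f$. Write $R := \Val(M)$. As an $\Lrg$-structure $R$ is a complete metric space, and since $\varpi$ is topologically nilpotent we have $|\varpi| < 1$, so a $\varpi$-adically Cauchy sequence is metrically Cauchy and therefore converges in $R$; moreover $\bigcap_n \varpi^n R = 0$ because $v(\varpi) > 0$, so $R$ is $\varpi$-adically separated. The reduction $R \to R/(\varpi)$ commutes with $x \mapsto x^p$, so $f$ is a well-defined map of multiplicative monoids; the point is that $g$ inverts it.

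The arithmetic input is that raising to the $p$-th power is $\varpi$-adically contracting: if $a \equiv b \pmod{\varpi^k}$ in $R$ with $k \geq 1$, then $a^p \equiv b^p \pmod{\varpi^{k+1}}$. This follows by expanding $(b + \varpi^k c)^p$ with the binomial theorem: the intermediate terms are divisible by $p\varpi^k$, hence by $\varpi^{k+1}$ because $\varpi \mid p$, and the final term $\varpi^{kp}c^p$ is divisible by $\varpi^{k+1}$ since $k(p-1) \geq 1$. Iterating, $a \equiv b \pmod{\varpi}$ implies $a^{p^j} \equiv b^{p^j} \pmod{\varpi^{j+1}}$ for all $j \geq 0$. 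This estimate, which is exactly where the hypothesis $\varpi \mid p$ enters, is the crux of the argument.

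Now fix $x = (x_i)_i \in (R/(\varpi))^\flat$ and choose lifts $\tilde{x}_i \in R$ of the $x_i$. From $\tilde{x}_{i+1}^p \equiv \tilde{x}_i \pmod{\varpi}$ and the estimate applied $j$ times we get $\tilde{x}_{i+j+1}^{p^{j+1}} \equiv \tilde{x}_{i+j}^{p^j} \pmod{\varpi^{j+1}}$, so $(\tilde{x}_{i+j}^{p^j})_j$ is $\varpi$-adically Cauchy; let $y_i := g(x)_i$ be its limit in $R$, so that $y_i \equiv \tilde{x}_{i+j}^{p^j} \pmod{\varpi^{j+1}}$ for every $j$, using that the ideal $\varpi^{j+1}R = \{z : v(z) \geq (j+1)v(\varpi)\}$ is closed. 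Comparing two choices of lifts via the same estimate shows $y_i$ is independent of the lifts, and continuity of $z \mapsto z^p$ gives $y_i^p = \lim_j \tilde{x}_{i+j}^{p^{j+1}} = y_{i-1}$, so $(y_i)_i \in R^\flat$ and $g$ is well-defined. It remains to check the compositions: taking $j = 0$ above gives $y_i \equiv \tilde{x}_i \pmod{\varpi}$, i.e. $f(g(x)) = x$; and for $(z_i)_i \in R^\flat$ one may take $\tilde{x}_i = z_i$, so $z_{i+j}^{p^j} = z_i$ for all $j$ and $g(f((z_i)_i))_i = z_i$. Hence $f$ is a bijection with inverse $g$. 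Beyond the binomial estimate, the only thing needing care is this last bookkeeping, ensuring $g$ is lift-independent and lands in a $p$-th-power-compatible system.
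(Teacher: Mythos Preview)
Your proof is correct and follows the same approach as the paper's: both hinge on the binomial estimate that $a \equiv b \pmod{\varpi^k}$ implies $a^p \equiv b^p \pmod{\varpi^{k+1}}$ (using $\varpi \mid p$), then verify that $g$ is well-defined and inverts $f$ on both sides exactly as you do. Your write-up is slightly more detailed---you spell out the binomial argument and the completeness and separatedness of $R$---but the structure and key idea are identical.
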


In particular, \(\Val(M)^\flat\) can be made into a ring.

\begin{proof}
The crucial fact that is the following: if $a,b\in \Val(M)$ are congruent modulo
$\varpi^i$, then $a^p\equiv b^p \mod \varpi^{i+1}$. It follows that if $a\equiv
b \mod \varpi$, then $a^{p^i} \equiv b^{p^i} \mod \varpi^{i+1}$.

Now consider some \(x = (x_i)_i \in (\Val(M)/(\varpi))^\flat\), since
\(x_{i+j}^{p^j} = x_i\), the sequence \(\tilde{x}_{i+j}^{p^j}\) is Cauchy and
its limit $y_i\in\Val(M)$ does not depend on the choice of $\tilde{x}_{i}$. Note
also that \((\lim_{j} \tilde{x}_{i+j+1}^{p^j})^p = \lim_{j}
\tilde{x}_{i+j+1}^{p^{j+1}}= \lim_{j} \tilde{x}_{i+j}^{p^j}\) so
\((y_i)_{i\in{\mathbb{N}}}\) is indeed an elements of \(\cO(M)^\flat\). Since \(y_i
\equiv x_i \mod \varpi\), we have \(f\circ g = \id\). Finally, for every \(x \in
\cO(M)^\flat\), we have \(\lim_j x_{i+j}^{p^j} = x_i\) and hence \(g\circ f =
\id\).
\end{proof}

Let \(M \models \PERF_{|p|=\alpha}\), where \(0 < \alpha < 1\). The projection
\(\res_p : \Val(M) \to \Val(M)/(p)\) is an interpretation of the discrete ring
\(\Val(M)/(p)\) in \(M\). Our goal now is to axiomatize \(\Val(M)/(p)\). Let
\(\TVR\) (truncated valuation ring) be the first other theory in the language
\(\Ldiv\) of (discrete) rings with a divisibility predicate \(x \div y\)
axiomatized by:
\begin{itemize}
\item The theory of rings;
\item for all \(x,y\), we have \(x\div y\) if and only if \(x\) divides \(y\);
\item for all \(x\) and \(y\), either \(x\div y\) or \(y\div x\);
\item for all \(x\) and \(y\), if \(x\neq 0\) and \(x=yx\) then \(y\) is a unit.
\end{itemize}

Let \(R\models \TVR\), we can consider the map \(v : a \mapsto (a)\) into the
the set \(\Gamma\) of principal ideals of \(R\) ordered by reverse inclusion ---
so \(v(x)\leq v(y)\) if and only if \(x\div y\). It is a linearly ordered
commutative monoid with respect to multiplication of principal ideals (that we
denote +). Its minimal element is the neutral element \(0 = v(1)\). The maximal
element is \(\infty = v(0)\), we have \(\gamma + \infty = \infty\) for all
\(\gamma \in \Gamma\) and for every \(x\in R\), if \(v(x) = \infty\) then \(x =
0\). Moreover, for every \(\gamma,\delta\in R\), if \(\gamma \leq \delta \neq
\infty\) then there exists a unique \(\epsilon \in \Gamma\) such that
\(\gamma + \epsilon  = \delta\) --- we also write \(\epsilon = \delta-\gamma\).
Indeed, if \(\delta = (y)\subseteq (x)=\gamma\), there exists \(z\in R\) such
that \(y = xz\); and if, for some \(z_1,z_2\in R\), we have \(xz_1 = xz_2\neq
0\), then we may assume \(z_2 = z_1 a\) for some \(a\) and thus \(xz_1 = xz_1
a\). So \(a\in R^{\times}\) and \(v(z_1) = v(z_2)\). Finally, for every \(x,y\in
R\), we have \(v(xy) = v(x)+v(y)\) and \(v(x + y) \geq \max\{v(x),v(y)\}\). We
say that \(v\) is a truncated valuation on \(R\).

Fix some \(\alpha \in (0,1)\). If \(\varpi\in R\setminus\{0\}\) be topologically nilpotent, then \(v\) induces a unique ultrametric
norm \(\vert\cdot\vert : \Gamma \to (0,1)\) such that \(|\varpi| = \alpha\).

\begin{lem}
Then norm \(\vert\cdot\vert\) is continuously quantifier\hyp{}free \(\varpi\)-definable in \(R\).
\end{lem}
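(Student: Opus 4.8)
The plan is to display the norm as a uniform limit of quantifier\hyp{}free $\varpi$\hyp{}definable predicates, using the divisibility predicate $\div\in\Ldiv$ to make the relevant comparisons atomic. Since $R$ is a truncated perfectoid ring, Frobenius is surjective on it, so for each $\ell$ we may fix a $p^\ell$\hyp{}th root $\varpi_\ell$ of $\varpi$ in $R$; we enlarge the parameter to $\varpi$ together with the $\varpi_\ell$. For fixed $\ell,m$ the relation ``$\varpi_\ell^{m}\div x$'' is a quantifier\hyp{}free $\Ldiv$\hyp{}formula in the constant symbols $\varpi_\ell$, and, read in $R$ with the $\infty$\hyp{}conventions of $\TVR$, it says exactly that $v(\varpi_\ell^{m})=m\,v(\varpi)/p^\ell\le v(x)$; equivalently, for $x\neq0$, that $m/p^\ell\le t(x):=v(x)/v(\varpi)$ computed in the rank\hyp{}one coarsening (note $v(\varpi_\ell^{m})\le v(x)<v(p)$ forces $\varpi_\ell^{m}\neq0$ automatically). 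Because $\{m/p^\ell:\ell,m\ge0\}=\Zz_{\ge0}[1/p]$ is dense in $[0,\infty)$ and $t(x)<v(p)/v(\varpi)$, this gives for every $x$
\[
|v(x)|\;=\;\inf\bigl\{\,\alpha^{m/p^\ell}\;:\;\ell,m\ge0,\ \varpi_\ell^{m}\div x\,\bigr\}
\]
(for $x=0$ every such condition holds, so the infimum is $0$; for $x\neq0$ it equals $\alpha^{\sup\{m/p^\ell\le t(x)\}}=\alpha^{t(x)}$), and this predicate is constant on associates, hence descends to $\Gamma$ and computes the norm there.

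For each $N$ set $H_N(x):=\min_{\ell,m\le N}\max\bigl(\alpha^{m/p^\ell},\,\theta_{\ell,m}(x)\bigr)$, where $\theta_{\ell,m}$ is the $\{0,1\}$\hyp{}valued quantifier\hyp{}free $\varpi$\hyp{}formula that is $0$ when $\varpi_\ell^{m}\div x$ and $1$ otherwise, so that $\max(\alpha^{m/p^\ell},\theta_{\ell,m}(x))$ equals $\alpha^{m/p^\ell}$ when $\varpi_\ell^{m}\div x$ and $1$ when it does not. Each $H_N$ is a finite combination of real constants and $\{0,1\}$\hyp{}valued quantifier\hyp{}free $\varpi$\hyp{}formulas via the connectives $\min,\max$, hence a quantifier\hyp{}free $\varpi$\hyp{}definable predicate, and (as $(\ell,m)=(0,0)$ always contributes $\alpha^0=1$) one has $H_N(x)=\min\{\alpha^{m/p^\ell}:\ell,m\le N,\ \varpi_\ell^{m}\div x\}\searrow|v(\cdot)|$ pointwise. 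The substance is uniform convergence, with a rate independent of $R$. For $x\neq0$ with $t(x)\le N$: choosing $\ell$ with $p^\ell$ just below $N/t(x)$ and $m=\lfloor t(x)p^\ell\rfloor\le N$ produces an admissible $m/p^\ell\le t(x)$ with $t(x)-m/p^\ell\le t(x)/N$, so
\[
0\;\le\;|v(x)|-H_N(x)\;\le\;\alpha^{t(x)}\bigl(1-\alpha^{\,t(x)/N}\bigr),
\]
which is $O(1/N)$ uniformly because $s\mapsto s\,\alpha^{s}$ is bounded on $[0,\infty)$; for $t(x)>N$ (and for $x=0$) one has $H_N(x)\le\alpha^{N}$ directly. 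Topological nilpotence of $\varpi$ is used only here, to ensure $t(x)<\infty$ for every $x\neq0$ — equivalently $\bigcap_n(\varpi^n)=(0)$ — so that no element escapes the estimate. By Remark~\ref{rem:uniform-limits}, $|v(\cdot)|$ is then the fixed Tietze connective applied to $(H_N)_N$, hence a definable predicate, and it is quantifier\hyp{}free because each $H_N$ is.

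The step I expect to be the main obstacle is settling on the right formula: one must notice that comparing $x$ with the ordinary powers $\varpi^{m}$ is insufficient — once $R$ is truncated, high powers $x^{n}$ vanish and stop separating elements of large valuation, and the naive ``$\varpi^{m}\div x^{n}$'' infimum then overestimates the norm by a non\hyp{}vanishing amount — whereas passing to $p$\hyp{}power roots $\varpi_\ell$, legitimate by perfectoidness, restores a dense scale of comparison elements that remain nonzero far enough up $\Gamma$. After that, the $\infty$\hyp{}convention bookkeeping and the uniform estimate are routine, and the passage to a quantifier\hyp{}free continuous $\varpi$\hyp{}definable predicate is exactly Remark~\ref{rem:uniform-limits} together with the fact that $\min$, $\max$ and real constants are admissible connectives.
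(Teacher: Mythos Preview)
Your proof is correct and follows essentially the same strategy as the paper: approximate \(|x|\) by step functions whose level sets are cut out by divisibility comparisons with \(p\)-power-root powers of \(\varpi\), then show uniform convergence. The paper indexes by a single \(n\) and lets \(m\) range up to \(p^{2n}\), setting \(t_n(x)=\alpha^{m/p^n}\) on the interval \(m/p^n\cdot v(\varpi)\le v(x)<(m+1)/p^n\cdot v(\varpi)\); your double index \(\ell,m\le N\) encodes the same data. Two small remarks: your displayed inequality should read \(0\le H_N(x)-|v(x)|\), not the reverse, and the honest upper bound is \(\alpha^{t(x)}(\alpha^{-\delta}-1)\) with \(\delta\lesssim t(x)/N\), which is what your ``\(s\mapsto s\alpha^s\) bounded'' argument actually controls; also, like the paper (which works ``under the further assumption that \(p^{-n}v(\varpi)\) exists''), you tacitly use parameters \(\varpi_\ell\) beyond \(\varpi\) itself.
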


\begin{proof}
We present the proof under the further 
assumption 
that \(p^{-n}v(\varpi)\) exists for all \(n\geq 0\), as this will
simplify the notation and is the case we will use afterwards. Fix some
\(n\geq 1\). For every \(x\in R\) and \(1\leq m < p^{2n}\), if \(m/(p)^n \cdot
v(\varpi) \leq v(x) < (m+1)/(p)^n \cdot v(\varpi)\), then \(\alpha^{(m+1)/(p)^n}
\leq |x|\leq \alpha^{m/(p)^n}\). In that case, set \(t_n(x) = \alpha^{m/(p)^n}\)
--- if \(v(x) \geq p^n\cdot v(\varpi)\), set \(t_n(x) = 0\). Then the \(t_n\)
are quantifier\hyp{}free \(\varpi\)-definable as they take finitely many values and
the fibers are quantifier\hyp{}free definable. Moreover, they uniformly converge to
\(\vert\cdot\vert\).
\end{proof}

Let the theory \(\TPERF_p\) be the \(\Ldiv\)-theory containing \(\TVR\) and :
\begin{itemize}
\item \(p = 0\);
\item \(\phi : x\mapsto x^p\) is surjective;
\item there exists \(\varpi\neq 0\) such that \(\varpi^p = 0\) and for all
\(x\), if \(x^p = 0\), then \(\varpi \div x\) --- we say that \(\varpi\) is a
pseudo uniformizer. 
\end{itemize}

\begin{rem}
If \(M\models\PERF_{|p|=\alpha}\), with \(0 < \alpha < 1\) and
\(\varpi\in\Val(M)\setminus\{0\}\) is a topologically nilpotent divisor of
\(p\), then \(\Val(M)/(\varpi) \models \TPERF_p\).
\end{rem}

Let \(\Ldivpi = \Ldiv\cup\{\varpi_i\mid i\geq 1\}\) and let
\(\TPERF_{p,\varpi}\) be the \(\Ldivpi\)-theory \[\TPERF_p\cup \{\varpi_1\text{
is a pseudo uniformizer, }\varpi_{i+1}^p = \varpi_i \mid i\geq 1\}.\]

\begin{prop}
\label{tilt facts}
Let \(R\models\TPERF_{p,\varpi}\).
\begin{enumerate}
\item Let \(v^\flat : R^\flat \to \Gamma^\flat = \projlim_{x\mapsto px} \Gamma\)
the map induced by \(v\). Then \((R^\flat,v^\flat)\) is a characteristic \(p\)
perfect valued ring.

\item The map
\[\begin{array}{cccc}
\iota:&\Gamma&\to&\Gamma^\flat\\
&\gamma\neq\infty&\mapsto&(p^{-i}\gamma)_i\\
&\infty &\mapsto& \infty \end{array}\] is strictly increasing. The image of
\(\Gamma\setminus\{\infty\}\) by \(\iota\) is the initial segment \(<
pv(\varpi_1)\) of \(\Gamma^\flat\). Let \(\varpi^\flat\in R^\flat\) is the
element \((0,\varpi_1,\varpi_2,...)\). Using this identification, then
\(v^\flat(\varpi^\flat) = pv(\varpi_1)\).

\item The projection on the first co\hyp{}ordinate \({}^\natural : R^\flat \to R\)
induces an isomorphism \(R^\flat/(\varpi^\flat) \cong R\).

\item \(R^\flat\) is a \(\varpi^\flat\)-adically complete and separated valuation
ring.
\end{enumerate}
\end{prop}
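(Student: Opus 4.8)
The plan is to treat the four items in order, each reducing to a concrete computation about the inverse limit $R^\flat = \projlim_{x \mapsto x^p} R$ under the identifications set up before the statement. For item (1), I would first observe that since $p = 0$ in $R$, the monoid $R^\flat$ is actually a ring (addition is defined coordinatewise after passing to $p$-power limits, exactly as in Lemma~\ref{crucial}, but here the Frobenius is literally $x \mapsto x^p$ so no limits are needed), and it has characteristic $p$ and is perfect because Frobenius on $R^\flat$ is the shift $(x_i)_i \mapsto (x_i^p)_i = (x_{i-1})_i$, which is bijective with inverse the shift the other way. That $v^\flat$ is a valuation on $R^\flat$ follows because $v$ is a truncated valuation on each coordinate: $v^\flat(xy) = v^\flat(x) + v^\flat(y)$ and $v^\flat(x+y) \geq \max$ are checked coordinatewise in $\Gamma^\flat = \projlim_{\gamma \mapsto p\gamma} \Gamma$, and the key point — that $v^\flat(x) = \infty$ forces $x = 0$ — holds because $v(x_i) = \infty$ for all $i$ forces each $x_i = 0$ by the axiom of $\TVR$. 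That $R^\flat$ is a valuation ring: given $x, y \in R^\flat$ nonzero, one compares $v^\flat(x), v^\flat(y)$ in $\Gamma^\flat$, which is linearly ordered since $\Gamma$ is and inverse limits of linear orders along surjections are linearly ordered; if say $v^\flat(x) \leq v^\flat(y)$, one produces $z$ with $y = xz$ by dividing coordinatewise using the unique-difference property of $\TVR$ noted in the text, checking the $z_i$ are $p$-power compatible.

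For item (2), that $\iota$ is well-defined requires $p^{-i}\gamma$ to exist in $\Gamma$ for all $i$; this is where I would invoke that $R \models \TPERF_p$ so Frobenius is surjective, hence for $\gamma = v(a)$ we have $\gamma = v(b^p) = p\cdot v(b)$, and inductively $p^{-i}\gamma$ exists. Strict monotonicity is immediate from strict monotonicity of $\gamma \mapsto p\gamma$ on the value monoid. For the image: $\iota(\gamma) = (p^{-i}\gamma)_i$ has $i$-th coordinate $p^{-i}\gamma \leq p\cdot v(\varpi_1)$ is equivalent to $\gamma \leq p^{i+1}v(\varpi_1) = v(\varpi_1^{p^{i+1}})$; the point is that the $\varpi_i$ form a compatible system $(0, \varpi_1, \varpi_2, \dots) = \varpi^\flat \in R^\flat$ with $v^\flat(\varpi^\flat)$ having $i$-th coordinate $v(\varpi_i) = p^{-i+1}v(\varpi_1)$, i.e. $v^\flat(\varpi^\flat) = \iota(pv(\varpi_1))$, and an element of $\Gamma^\flat$ lies in the image of $\iota$ iff it is a compatible system all of whose large-index coordinates are eventually below every $p^{i}v(\varpi_1)$ — one checks this matches the initial segment $\{\delta \in \Gamma^\flat : \delta < pv(\varpi_1)\}$ under the embedding $\Gamma \hookrightarrow \Gamma^\flat$ sending $\gamma$ to the stationary-at-infinity... more precisely, I would identify $\Gamma$ with its image and verify the initial-segment claim by the divisibility characterization of $\varpi^\flat$ (every $x$ with $x^p = 0$ is divisible by $\varpi_1$).

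Item (3) is the heart and I expect the main obstacle. The map ${}^\natural : R^\flat \to R$, $(x_i)_i \mapsto x_0$, is a surjective ring homomorphism because Frobenius is surjective on $R$ (so every $a \in R$ extends to a compatible system by choosing $p$-th roots), and its kernel is $\{(x_i)_i : x_0 = 0\}$; I must show this kernel equals the ideal $(\varpi^\flat)$. One inclusion, $(\varpi^\flat) \subseteq \ker$, is clear since $\varpi^\flat$ has zeroth coordinate $0$. For the reverse, given $x = (x_i)_i$ with $x_0 = 0$: since $x_0 = x_1^p = 0$, the pseudo-uniformizer axiom gives $\varpi_1 \div x_1$, say $x_1 = \varpi_1 y_1$; then I want to lift this to $x = \varpi^\flat \cdot (\text{something in } R^\flat)$, which requires simultaneously dividing $x_{i+1} = \varpi_{i+1} \cdot(?)$ compatibly — the delicate part is checking that the quotients can be chosen $p$-power-compatibly, i.e. that $(x_i / \varpi_i)_i$ (with a consistent choice) lands in $R^\flat$; this uses that $v$ is a valuation so quotients are unique when they exist, plus surjectivity of Frobenius to solve at each level, and an induction showing $\varpi_i \div x_i$ for all $i$ (from $x_i^p = x_{i-1}$ divisible by $\varpi_{i-1} = \varpi_i^p$, deduce $\varpi_i \div x_i$ using that in a valuation ring $a^p \div b^p \Rightarrow a \div b$). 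Finally item (4): $R^\flat$ is $\varpi^\flat$-adically separated because $\bigcap_n (\varpi^\flat)^n = 0$, which by item (2) translates to $\bigcap_n \{v^\flat \geq n\, v^\flat(\varpi^\flat)\} = \{\infty\}$ in $\Gamma^\flat$, true since $v^\flat(\varpi^\flat)$ is a nonzero non-idempotent (so $n\, v^\flat(\varpi^\flat) \to \infty$ cofinally below... one argues coordinatewise); and completeness follows since $R^\flat$ is already an inverse limit — a $\varpi^\flat$-adic Cauchy sequence in $R^\flat$ has each coordinate Cauchy in the discrete topology... more carefully, $R^\flat/(\varpi^\flat)^n \cong$ a truncation readable off the first $\sim n$ coordinates via item (3) applied to $\varpi_n$ in place of $\varpi_1$, and $R^\flat = \projlim_n R^\flat/(\varpi^\flat)^n$ by construction. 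I would flag that the only real work is the compatible-division argument in item (3); everything else is bookkeeping with the $\TVR$ axioms and the inverse-limit definitions.
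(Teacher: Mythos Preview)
Your outline is mostly on track, but the ``divide coordinatewise'' step --- which you invoke both in item (1) (for the valuation ring property) and in item (3) (for $\ker({}^\natural) \subseteq (\varpi^\flat)$) --- does not go through as stated, because $R$ has zero divisors. If $v(x_i) \leq v(y_i)$ and you write $y_i = x_i z_i$, the $z_i$ are only determined modulo $\mathrm{Ann}(x_i)$; the relation $x_i z_{i+1}^p = x_{i+1}^p z_{i+1}^p = y_{i+1}^p = y_i = x_i z_i$ gives only $x_i(z_{i+1}^p - z_i) = 0$, which does \emph{not} force $z_{i+1}^p = z_i$. So you do not obtain an element of $R^\flat$ this way. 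The ``unique-difference property of $\TVR$'' you cite is a statement about $\Gamma$, not about $R$: quotients in $R$ are unique up to units only when the product is nonzero, and in the regime you are in the relevant products can vanish. A smaller but related gap: in item (1) your ``key point'' $v^\flat(x) = \infty \Rightarrow x = 0$ does not by itself show $v^\flat$ is a valuation rather than a truncated one --- you still need $\gamma + \delta \neq \infty$ for $\gamma, \delta \in \Gamma^\flat \setminus \{\infty\}$.

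The paper takes a different route that avoids this. In item (1) it does not attempt the valuation ring property at all; it only checks that $\Gamma^\flat \setminus \{\infty\}$ is closed under addition via $\gamma + \delta \leq p\gamma$ together with the observation that $(p\gamma)_{i+1} = \gamma_i$, so $p\gamma \neq \infty$. For item (3) it establishes just the valuation inequality $v^\flat(\varpi^\flat) \leq v^\flat(x)$ when $x_0 = 0$ (from $x_i^{p^i} = 0$ and the pseudo-uniformizer axiom), and proves $\varpi^\flat$-adic completeness via $R^\flat \cong \projlim_{x \mapsto x^p} R^\flat/(\varpi^\flat) \cong \projlim_n R^\flat/(\varpi^\flat)^n$. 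The actual divisibility $x \mid y$ when $v^\flat(x) \leq v^\flat(y)$ is then produced in item (4) by a \emph{limit} argument: pick coordinatewise quotients $a_{i,0}$ with $y_i = x_i a_{i,0}$, lift each to some $a_i \in R^\flat$, check that $(a_i^{p^i})_i$ is $\varpi^\flat$-adically Cauchy, and let the quotient be its limit. This completeness-plus-limit construction is precisely what replaces your attempted direct coordinatewise compatibility.
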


\begin{proof}
\begin{enumerate}
\item It is routine to check that \(v^\flat\) is a truncated valuation. To show
that it is a valuation, it suffices to check that if \(\gamma,\delta
\in\Gamma^\flat\setminus\{\infty\}\), then \(\gamma + \delta \neq \infty\) ---
in that case, \(\Gamma^\flat \setminus\{\infty\}\) is the positive part of an
ordered abelian group and hence \(v^\flat\) is a valuation. Note first that
since \(\gamma = (\gamma_i)_{i\geq 0} \neq\infty\), we have \((p\gamma)_{i+1}
= \gamma_{i}\) and hence \(p\gamma \neq \infty\). As we may assume that
\(\delta\leq\gamma\), it follows that \(\gamma + \delta \leq 2\gamma \leq
p\gamma < \infty\). So \((R^\flat,v^\flat)\) is a valued ring and it is perfect of 
characteristic \(p\) by construction.

\item  For every \(\gamma\leq \delta\in\Gamma\), if \(p\gamma = p\delta\neq
\infty\), then \(p(\delta - \gamma) = 0\) and hence \(\delta = \gamma\). It
follows that \(\iota\) is well-defined. It is strictly increasing. The set
\(\iota(\Gamma\setminus\{\infty\})\) is the set of \(\gamma = (\gamma_i)_{i\geq
0} \in\Gamma^\flat\) such that \(\gamma_0 \neq \infty\), it is an initial
segment of \(\Gamma^\flat\). Moreover, if \(\gamma < pv(\varpi_1)\), then
\(\gamma_1 < v(\varpi_1)\) and hence \(\gamma_0 \neq \infty\).

With this identification, we have \(p^{-1}v^\flat(\varpi^\flat) =
v^\flat(\varpi^\flat)_1 = v(\varpi_1) < \infty\).

\item We have \((\varpi^\flat)^\natural = 0\). Conversely, if \(x = (x_i)_{i\geq
0} \in R^\flat\setminus\{0\}\) is such that \(x_0 = 0\), then, for some \(i >
0\), \(x_i \neq 0\). Since \(x_i^{p^i} = x_0 = 0\), it follows that
\(v(\varpi_1) \leq p^{i-1}v(x_i)\). So \(v^\flat(\varpi^\flat) = pv(\varpi_1)
\leq p^i v(x_i) = v^{\flat}(x)\) and the kernel of \({}^\natural\) is indeed
\((\varpi)\).

Now, the isomorphism \(R^\flat/(\varpi) \cong R^\flat/(\varpi^{p^n})\) given by
\(x\mapsto x^{p^n}\) induces an isomorphism \(R^\flat \cong \projlim_{x\mapsto
x^p} R^\flat/(\varpi) \cong \projlim R^\flat/(\varpi^n)\). Since the left to
right map can be checked to be the natural inclusion into the \(\varpi\)-adic
completion, it follows that \(R^\flat\) is \(\varpi\)-adically complete.

\item Let \(x = (x_i)_{i\geq 0},y = (y_i)_{i\geq 0}\in R^\flat\) be such that
\(v^\flat(x) \leq v^\flat(y)\). Taking \(p\)-th roots, we may assume that
\(v^\flat(y) < v^\flat(\varpi^\flat)\). Then, for all \(i\geq 0\), we have
\(v(x_i) \leq v(y_i) < \infty\) so we find some \(a_{i,0}\) such that \(y_i =
x_i a_{i,0}\). Choose some \(a_i\in R^\flat\) whose first co\hyp{}ordinate is
\(a_{i,0}\). Then \(y^{p^{-i}} \equiv x^{p^{-i}} a_i \mod \varpi^\flat\). It
follows that \(v^\flat(a_{i+1}^{p^{i+1}} - a_{i}^{p^{i}}) \geq p^i
v^\flat(\varpi^\flat)-v^\flat(x) \geq p^{i-1}v^\flat(\varpi^\flat)\). So \(a_i\)
is Cauchy and let \(a = \lim a_i\). Then \(y = \lim_i x a_i^{p^i} = x
a\).\qedhere
\end{enumerate}
\end{proof}

Let \(\alpha\in (0,1)\) and \(R\models \TPERF_{p,\varpi}\). Let
\(\vert\cdot\vert^\flat_\alpha : R^\flat \to [0,1]\) be the coarsening of \(v^\flat\)
normalized so that \(|\varpi^\flat|^\flat_\alpha = \alpha\). Then
\(R^\flat_\alpha = (R^\flat,\vert\cdot\vert^\flat_\alpha,\varpi^\flat)\) is a model of the
\(\LD\cup\{\varpi\}\)-theory \(\PERF_{|p|=0,|\varpi|=\alpha}\).

Let \(N\models \PERF_{|p|=0,|\varpi|=\alpha}\). The \(\Ldivpi\)-structure
\(N^\natural = (\Val(N)/(\varpi),\res_\varpi(\varpi_i)\mid i\geq 1)\) is a model
of \(\TPERF_{p,\varpi}\).

\begin{thm}
\label{tilt interp}    
Fix \(\alpha\in (0,1)\). Let \(R\models \TPERF_{p,\varpi}\) and
\(N\models\PERF_{|p|=0,|\varpi|=\alpha}\).

\begin{enumerate}
\item The set \(\Omega(R) := \{x\in R^{\mathbb{N}}
\mid\forall i\in{\mathbb{N}},\ x_{i+1}^p = x_i\}\) is quantifier\hyp{}free (continuously)
definable in \(R\).
\item The map \(\id_\Omega : \Omega(R) \to R^\flat\) is a quantifier\hyp{}free
interpretation of \(R^\flat_\alpha\) in \(R\).
\item The map \(\res_\varpi :
\Val(N) \to \Val(N)/(\varpi)\) is a quantifier\hyp{}free interpretation of
\(N^\natural\) in \(N\).
\item The maps \(\id_\Omega\) and \(\res_\varpi\) form a quantifier\hyp{}free
bi\hyp{}interpretation between \(\TPERF_{p,\varpi}\) and
\(\PERF_{|p|=0,|\varpi|=\alpha}\).
\end{enumerate}
\end{thm}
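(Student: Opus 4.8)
The plan is to derive all four parts from the structural results already established---Lemma~\ref{crucial} and Proposition~\ref{tilt facts}---with parts~(1)--(3) reduced to the criterion of Proposition~\ref{prop:qf-interp} for a quantifier-free interpretation, and part~(4) reduced to exhibiting two definable round-trip isomorphisms. For part~(1) the key point is that any finite initial segment of a $p$-power compatible chain extends to an element of $\Omega(R)$, since $\phi\colon x\mapsto x^p$ is surjective in $\TPERF_p$. As $R$ carries the discrete metric and $R^{\mathbb N}$ the metric $d(u,v)=\sup_i 2^{-i}d(u_i,v_i)$, this gives $d(x,\Omega(R))\le 2^{-n}$ exactly when $x_{i+1}^p=x_i$ holds for all $i<n-1$, so $d(x,\Omega(R))=\inf_n\bigl(2^{-n}\vee\sup_{i<n-1}d(x_{i+1}^p,x_i)\bigr)$ is a continuous connective applied to quantifier-free atomic predicates, hence quantifier-free definable.

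For part~(2), observe that as a set $\Omega(R)$ \emph{is} $R^\flat=\projlim_{x\mapsto x^p}R$, so $\id_\Omega$ is a bijection, and I would verify the hypotheses of Proposition~\ref{prop:qf-interp}. Since $p=0$ in $R$, Frobenius is a ring endomorphism, so $R^\flat$ is the inverse limit of $R$ in the category of rings: its ring operations and the constants $0,1$ are coordinate-wise, hence represented by the terms $(x_i+y_i)_i$, $(-x_i)_i$, $(x_iy_i)_i$, and the constant sequences, while the constants $\varpi_i$ of $R^\flat_\alpha$ are the shifted sequences built from the $\varpi_j$ of $R$ and $0$ (Proposition~\ref{tilt facts}(2)), hence are sequences of closed terms. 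For the metric, every nonzero $z\in\Omega(R)$ has a well-defined first nonzero coordinate $n(z)$ (and is nonzero from there on), and by Proposition~\ref{tilt facts} one has $|z|^\flat_\alpha=|z_{n(z)}|^{p^{n(z)}}$ for the quantifier-free $\varpi$-definable norm $|\cdot|$ on $\Gamma$ produced above, normalised by $|\varpi_1|=\alpha^{1/p}$; the pseudo-uniformizer bound $\varpi_1\div z_{n(z)}$ for $n(z)\ge 1$ shows this predicate is uniformly continuous on $\Omega(R)$, and splitting on the value of $n(z)$ exhibits it as quantifier-free definable. Finally $\rD^{R^\flat_\alpha}$ is determined by $v^\flat$, which is recovered from this norm predicate, so it too pulls back quantifier-free; Proposition~\ref{prop:qf-interp} then applies, and $R^\flat_\alpha\models\PERF_{|p|=0,|\varpi|=\alpha}$ follows from Proposition~\ref{tilt facts} together with perfectness of $R^\flat$ and density of the values $\alpha^{m/p^k}$ in $[0,1]$.

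For part~(3) the interpreting set is $\Val(N)$ itself with map $\res_\varpi$: the ring operations, $0$, $1$ and the constants $\varpi_i$ of $N^\natural$ lift through $\res_\varpi$ to the corresponding terms and constant symbols of $N$, the metric on the imaginary $\Val(N)/(\varpi)$ is $|x-y-\varpi\Val(N)|=\rD(\varpi,x-y)$, and the divisibility predicate of $N^\natural$ pulls back to something quantifier-free definable from $\rD$ via a continuous ramp connective (legitimate because the relevant predicate avoids the single value at which the ramp is not continuous), so Proposition~\ref{prop:qf-interp} applies again. For part~(4), running $\id_\Omega$ and then $\res_\varpi$ from $R\models\TPERF_{p,\varpi}$ produces the interpretation of $R^\flat/(\varpi^\flat)$ in $R$ with domain $\Omega(R)$ and underlying map $(x_i)_i\mapsto\res_{\varpi^\flat}((x_i)_i)$; post-composing with the isomorphism $R^\flat/(\varpi^\flat)\cong R$ of Proposition~\ref{tilt facts}(3) (the projection ${}^\natural$, which matches the named constants under the indexing conventions) turns this into $(x_i)_i\mapsto x_0$, a coordinate projection and hence definable, so this round trip is homotopic to $\id_R$. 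Conversely, running $\res_\varpi$ and then $\id_\Omega$ from $N\models\PERF_{|p|=0,|\varpi|=\alpha}$ produces the interpretation of $(\Val(N)/(\varpi))^\flat$ in $N$ with domain $\{(\widetilde x_i)_i\in\Val(N)^{\mathbb N}: \widetilde x_{i+1}^p\equiv\widetilde x_i\bmod\varpi\}$ and map $(\widetilde x_i)_i\mapsto(\res_\varpi\widetilde x_i)_i$; composing with the isomorphism $(\Val(N)/(\varpi))^\flat\cong\Val(N)$ obtained from the inverse $g$ of Lemma~\ref{crucial} followed by the canonical identification $\Val(N)^\flat\cong\Val(N)$ (valid since $\Val(N)$ is perfect of characteristic $p$) yields the map $(\widetilde x_i)_i\mapsto\lim_j\widetilde x_j^{p^j}$, a uniform limit of terms and in particular definable, so this round trip is homotopic to $\id_N$; hence $\id_\Omega$ and $\res_\varpi$ form a quantifier-free bi-interpretation.

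I expect the main obstacle to be the quantifier-free bookkeeping in part~(2): showing that the tilted norm $|\cdot|^\flat_\alpha$ and the predicate $\rD$ genuinely pull back to \emph{quantifier-free} predicates on $\Omega(R)$ forces one to route them through the first-nonzero-coordinate decomposition and to check the resulting uniform continuity by hand using the pseudo-uniformizer estimates of Lemma~\ref{crucial} and Proposition~\ref{tilt facts}; everything else is essentially a transcription of those two results into the formalism of interpretations.
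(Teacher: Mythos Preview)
Your proposal is correct and follows essentially the same route as the paper, with one imprecision worth flagging.

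For part~(1) your formula is in fact cleaner than the paper's: the paper writes \(d_n(x)=\inf_y\sup_{i\le n}|x_i-y^{p^{n-i}}|2^{-i}\), which syntactically carries a quantifier, whereas your expression \(\inf_n\bigl(2^{-n}\vee\sup_{i<n-1}d(x_{i+1}^p,x_i)\bigr)\) is a connective applied to atomic predicates and is visibly quantifier-free.

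In part~(2), your treatment of the norm via the first-nonzero-coordinate index \(n(z)\) and the bound \(|z|^\flat_\alpha\le\alpha^{p^{n(z)-1}}\) for \(n(z)\ge1\) is correct. The imprecise step is the last clause: ``\(\rD^{R^\flat_\alpha}\) is determined by \(v^\flat\), which is recovered from this norm predicate''. The valuation \(v^\flat\) is \emph{not} recoverable from its rank-one coarsening \(|\cdot|^\flat_\alpha\) in general, so this reduction does not go through as stated. The paper avoids this by defining \(\rD^\flat\) directly: setting \(D_\alpha(a,b)=0\) if \(a\div b\) and \(|b|_\alpha\) otherwise (a quantifier-free predicate on \(R\)), one has \(\rD^\flat(x,y)=\lim_i D_\alpha(x_i,y_i)^{p^i}\). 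Uniform convergence follows from exactly your estimate: if \(i_0\ge1\) is the first index with \(x_{i_0}\nmid y_{i_0}\), then \(y_{i_0-1}=0\), hence \(|y|^\flat_\alpha\le\alpha^{p^{i_0-1}}\). You can repair your argument in the same spirit by using the coordinate-wise predicate \(\div\) from \(\Ldivpi\) rather than trying to extract \(v^\flat\) from the norm.

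Parts~(3) and~(4) match the paper: the projection \({}^\natural\) realises the \(R\)-round-trip, and the uniform limit of terms \((\widetilde x_i)_i\mapsto\lim_j\widetilde x_j^{\,p^j}\) realises the \(N\)-round-trip via Lemma~\ref{crucial}.
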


\begin{proof}
\begin{enumerate}
\item  The predicate \(d_n(x) = \inf_y \sup_{i\leq n} |x_i -
y^{p^{n-i}}|2^{-i}\) on \(R^{\mathbb{N}}\) uniformly converges to \(|x-\Omega|\) ---
indeed, the error is at most \(2^{-n-1}\). So \(\Omega\) is continuously
quantifier\hyp{}free definable.

\item Let \(\vert\cdot\vert_\alpha : R \to [0,1]\) be the norm such that \(|\varpi_1|_\alpha
= \alpha^{1/(p)}\) and for every \(x,y\in R\) let
\[D_\alpha(x,y) =
\left\{\begin{array}{ll}
0& \text{if }x \div y\\
|y|_\alpha&\text{otherwise.} \end{array}\right.\]

This is a (continuous) quantifier\hyp{}free predicate in \(R\). Let \(D^\flat\)
denote the interpretation of \(\rD\) in \(R^\flat_\alpha\). Then for every
\(x,y\in R^\flat\), we have \(D^\flat(x,y) = \lim_i D_\alpha(x_i,y_i)^{p^i}\)
which is quantifier\hyp{}free definable. Since \(|x|_\alpha^\flat =
D^\flat(0,x)\), the first item is proved --- as noted earlier,
by~\cite[Proposition~3.6]{BY-reconstruction}, the pullback of
\(\vert\cdot\vert_\alpha^\flat\) extends to a (pseudo)metric on
\(R^{\mathbb{N}}\).

\item For every \(x,y\in \Val(N)\), we have \(\res_\varpi(x) \div
\res_\varpi(y)\) if and only if \[\min\{\rD(\varpi,x),\rD(x,y)\} = 0,\] and
otherwise, we have \(\min\{\rD(x,y),\rD(\varpi,x)\} = |x|\). So \(\div\) is
induced by a quantifier\hyp{}free definable predicate and the second item follows.

\item The double interpretation of \(R\) in itself has domain \(\Omega(R)\) and
the projection on the first co\hyp{}ordinate \({}^\natural:\Omega(R) \to R\), which is
a term, induces an isomorphism of \(R^\flat/(\varpi)\) with \(R\).

The domain of the double interpretation of \(N\) in itself is \(X = \{x\in
\Val^{\mathbb{N}}\mid x_{i+1}^p \equiv x_i \mod \varpi\}\). By \cref{crucial}, the term
\(x\mapsto \lim_i x_i^{p^i}\) induces an isomorphism \((\Val(N)/(\varpi))^\flat
\to \Val(N)^\flat\to \Val(N)\).
\qedhere
\end{enumerate}
\end{proof}

Recall that if \(R\) is a ring, then \(\W_n(R)\) is a ring with underlying set
\(R^n\) where addition, subtraction and multiplication are given by polynomials
over \(\Zz\). The projection \(\W_{n+1}(R) \to \W_n(R)\) is a ring morphism ---
in other words the \(n\)-th co\hyp{}ordinate of addition, subtraction and
multiplication are computed using only the first \(n\) co\hyp{}ordinates --- and the
limit of this projective system is denoted \(\W(R)\). For every \(n\), \(\w_n(x)
= \sum_{i<n} x_i^{p^{n-i}} p^i\) defines a ring morphism \(\W_n(R) \to R\). We
write \([\cdot] : R\to \W(R)\) for the map sending \(a\) to \((a,0,0,\ldots)\); it
is a multiplicative section of the projection \(\W(R) \to R\) on the first
co\hyp{}ordinate. The \(\W_n\) --- and hence \(\W\) --- are functorial and for any
ring morphism \(f : R_1 \to R_2\), \(\W_n(f) : x\mapsto (f(x_i))_{i<n}\) is a
ring morphism. By construction, \(\W(R)\) is \(p\)-adically complete and
separated. Note that if \(R\) has characteristic \(p\) and the Frobenius
morphism is bijective on \(R\), then, for any \(a\in \W(R)\), we have \(a =
\sum_i [a_i]^{p^{-i}} p^i\).

Let now \(N\models\PERF_{|p|=0}\). We fix \(\varpi\in\Val(N)\) such that
\(0<|\xi| < 1\) and \(b\in \W(\Val(N))^\times\). Note that, \(b\in
\W(\Val(N))^\times\) if and only if \(b_0\in\Val(N)^\times\). Let \(\xi =
[\varpi] - pb \in \W(\Val(N))\).

\begin{lem}
For any \(x\in \W(\Val(N))\), there exists \(y\in \Val(N)\) and \(a\in
\W(\Val(N))^\times\) such that \(x \equiv [y]a \mod \xi\). 
\end{lem}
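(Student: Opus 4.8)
The plan is to work modulo successive powers of $\xi$ and build $y$ and $a$ by a $\xi$-adic approximation argument, using the $p$-adic completeness of $\W(\Val(N))$ together with the facts that $\xi \equiv [\varpi] \bmod p$, that $\varpi$ is topologically nilpotent in the perfectoid $\Val(N)$, and that Frobenius is bijective on $\Val(N)$ (so every element of $\Val(N)$ has $p$-power roots and the Teichmüller description $\sum_i [a_i]^{p^{-i}}p^i$ of elements of $\W(\Val(N))$ is available).

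First I would set up the base case: given $x \in \W(\Val(N))$, write $x = \sum_i [x_i]^{p^{-i}} p^i$ and examine $x_0$. Either $x_0 \in \Val(N)^\times$, in which case $[x]a^{-1}$-type corrections are trivial and we may take $y = x_0$, $a = [x_0]^{-1}x$; or $v(x_0) > 0$, in which case, since $\varpi$ is topologically nilpotent and $|\Val(N)^\times|$ is dense, I would compare $v(x_0)$ with $v(\varpi)$. The key point is that modulo $\xi$ we have $p b \equiv [\varpi]$, so $p$ is, up to the unit $b$, congruent to $[\varpi]$; iterating, $p^k \equiv [\varpi]^k b^{-k} \bmod \xi$. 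Hence modulo $\xi$ every element of $\W(\Val(N))$ is congruent to a $\Val(N)$-linear combination (via Teichmüller lifts) of powers of $[\varpi]$, and collecting the lowest-order term lets one factor out a single Teichmüller element $[y]$ with $y$ a suitable power of $\varpi$ times a unit, leaving a unit cofactor $a$. Concretely I would argue: reduce $x$ modulo $\xi$; in the quotient ring $\W(\Val(N))/\xi \cong \Val(N)$ (this identification, coming from $\w_1$ or the first-coordinate map, should be recorded — note $\xi$ reduces $p$ to $\varpi b_0$), the image of $x$ is some $\bar x \in \Val(N)$; choose $y$ with $|y| = |\bar x|$ (possible by density of the value group, or take $y = \bar x$ directly), so $\bar x = y u$ for a unit $u \in \Val(N)^\times$; lift $u$ to $[\tilde u] \in \W(\Val(N))^\times$ with $\tilde u$ reducing to $u$. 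Then $x \equiv [y][\tilde u] \bmod \xi$, and $a := [\tilde u]$ works. Actually the cleanest route avoids any iteration: one only needs congruence \emph{modulo $\xi$}, which is exactly congruence after applying the ring map $\W(\Val(N)) \to \W(\Val(N))/\xi \cong \Val(N)$, so the statement reduces to the obvious fact that every element of the valuation ring $\Val(N)$ is a unit times a principal generator, i.e. $\Val(N)$ being a (truncated or genuine) valuation ring.

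So the heart of the argument is identifying $\W(\Val(N))/\xi$ with $\Val(N)$: since $\xi = [\varpi] - pb$ with $b$ a unit, reduction mod $p$ sends $\xi$ to $[\varpi] \bmod p$, and $\W(\Val(N))/p = \Val(N)$ (characteristic $p$, Frobenius bijective, so $\W_1 = \Val(N)$ and the ghost map degenerates appropriately); chasing through, $\W(\Val(N))/([\varpi]-pb)$ surjects onto $\Val(N)/(\varpi)$ and in fact, because $\W(\Val(N))$ is $p$-adically complete and $\xi$ generates an ideal making $p$ topologically nilpotent, one gets $\W(\Val(N))/\xi \cong \Val(N)$ with $p \mapsto \varpi b_0$. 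This is the standard fact that $\W(\Val(N))$ is a "period ring" whose quotient by such a $\xi$ recovers the untilt; I would cite Fargues--Fontaine \cite{FarFon} for the structural input and give the short direct verification. Then the lemma is immediate: the image $\bar x$ of $x$ in $\W(\Val(N))/\xi \cong \Val(N)$ can be written $\bar x = y \cdot \bar a$ with $y \in \Val(N)$ and $\bar a \in \Val(N)^\times$ (valuation ring), lift $\bar a$ to a unit $a \in \W(\Val(N))^\times$ via $[\,\cdot\,]$ up to higher Teichmüller terms, and $[y]a$ is then congruent to $x$ modulo $\xi$ as required.

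The main obstacle I anticipate is pinning down precisely the isomorphism $\W(\Val(N))/\xi \cong \Val(N)$ and its compatibility with the Teichmüller map — in particular, checking that the element $y$ produced genuinely lies in $\Val(N)$ (and not just in some completion) and that the cofactor $a$ is a genuine unit of $\W(\Val(N))$ rather than merely a unit modulo $\xi$. The latter is handled by noting $\W(\Val(N))^\times = \{a : a_0 \in \Val(N)^\times\}$, which was observed just before the statement, so a lift of a unit of $\Val(N)$ under first-coordinate reduction is automatically a unit; so the only real work is the quotient identification, and that is essentially the content of Lemma~\ref{crucial} transported along the $p$-adic completeness of $\W(\Val(N))$. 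Everything else is routine valuation-ring bookkeeping.
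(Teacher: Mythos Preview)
Your approach has a genuine gap. The identification you propose, \(\W(\Val(N))/\xi \cong \Val(N)\), is false: here \(N\models\PERF_{|p|=0}\) has characteristic \(p\), so \(p=0\) in \(\Val(N)\), whereas in \(\W(\Val(N))/\xi\) one has \(p \equiv [\varpi]b^{-1}\not\equiv 0\). The quotient \(A_\xi := \W(\Val(N))/\xi\) is the \emph{untilt}, a mixed-characteristic valuation ring, not \(\Val(N)\) itself; your line ``\(p\mapsto \varpi b_0\)'' already signals the contradiction, since that element is nonzero. What is true is only \(\W(\Val(N))/(\xi,p)\cong \Val(N)/(\varpi)\), which is much weaker.

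Even repairing this, the argument becomes circular. In the paper the present lemma is precisely what is used to \emph{define} \(v_\xi\) and to prove (Proposition~\ref{untilt facts}) that \(A_\xi\) is a valuation ring; you cannot invoke that structure here. And even granting that \(A_\xi\) is a valuation ring, ``every element is a unit times a generator'' is vacuous (take the element itself and \(1\)); the nontrivial content of the lemma is that the generator can be taken to be a \emph{Teichm\"uller lift} \([y]\), and your sketch gives no mechanism for that beyond the quotient identification you do not have. The paper instead argues directly: if \(v(x_0)<v(\varpi)\) one factors out \([x_0]\) and checks the cofactor is a unit; if \(v(x_0)\geq v(\varpi)\) one uses \(p\equiv[\varpi]b^{-1}\) to peel off a factor of \(p\), iterates, and shows by a \(p\)-adic Cauchy argument that if the process never terminates then \(x\in(\xi)\), so \(x\equiv[0]\).
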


\begin{proof}
Let us first assume that \(v(x_0) < v(\varpi)\). Let \(x'' = (x - [x_0] -
[x_1]^{1/(p)}p)p^{-2}\) and \(a = \varpi x_0^{-1}\). As \(p \equiv
[\varpi]b^{-1}\), we have \(x\equiv [x_0]([1+ ax_1] + p[a]x'')\). Note that
\(v(a)>0\), so \(1 + ax_1\in\Val(N)^\times\) and hence \(x[x_0]^{-1} = [1+ ax_1]
+ p[a]x'' \in W(\Val(N))^\times\).

Let us now assume that \(v(x_0)\geq v(\varpi)\). Let \(y_0 = x_0\varpi^{-1}\)
and \(x' = (x - [x_0])p^{-1}\). Then \(x\equiv [\varpi]([y_0] + b^{-1}x')\equiv
p([y_0]b + x')\). It follows by induction that if the lemma fails for \(x\),
then, for all \(n\), there exists \(y_n,z_n\in\W(\Val(m))\) such that \(x = \xi
y_n + p^n z_n\) --- in other words, \(x\) is in the \(p\)-adic closure
\(\bigcap_n (\xi) + (p^n)\) of \(\xi\). However, note that \(\xi(y_{n+1} -
y_{n}) = p^{n}(z_{n} - z_{n+1})\). Since \(\xi_0 = \varpi \neq 0\), it follows
that \(y_{n+1} - y_{n} \in (p^n)\). So the sequence \(y_n\) is \(p\)-adically
Cauchy. Let \(y = \lim_n y_n\). We have \(x = \lim_n \xi y_n + p^n z_n = \xi y
\in (\xi)\) and hence \(x \equiv [0]\mod \xi\).
\end{proof}

For every \(x\in \W(\Val(N))\) let \(y\) and \(a\) as in the lemma. We define
\(|x|_\xi = |y|\) and \(v_\xi(x) = v(y)\).

\begin{prop}
\label{untilt facts}
\begin{enumerate}
\item The map \(v_\xi\) induces a valuation on \(A_\xi = \W(\Val(N))/(\xi)\) and
\(A_\xi\) is its valuation ring.
\item The map \(\vert\cdot\vert_\xi\) is a pseudo-norm on \(\W(\Val(N))\) with kernel
\((\xi)\). The associated norm on \(A_\xi\) is the (norm associated to the) rank
one coarsening of \(v_\xi\).
\item the map \(\res_\xi\circ [\cdot] : \Val(N) \to A_\xi\) induces an isomorphism
\(\Val(N)/(\varpi) \to A_\xi/(p)\).
\item We have \((A_\xi,\vert\cdot\vert_\xi)\models \PERF_{|p|=|\varpi|}\).
\item Let \(D_\xi\) be the interpretation of \(\rD\) in \((A_\xi,\vert\cdot\vert_\xi)\). The map
\[\begin{array}{ccc}
\W(\Val(N))^2&\to & [0,1]\\
(x,y) &\mapsto& D_\xi(\res_\xi(x),\res_\xi(y)) \end{array}\] is quantifier\hyp{}free
\((\varpi,b)\)-definable in the \(\LD\)-structure \(N\).

\item We have an isomorphism 
\[\begin{array}{ccc}
(A_\xi)^\flat &\to& \Val(N)\\
(x_{i})_{i} &\mapsto& \lim_i x_{i,0}^{p^i}\\
(\res_\xi([x^{p^{-i}}]))_i&\mapsfrom&x. \end{array}\]
\end{enumerate}
\end{prop}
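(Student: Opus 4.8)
Write $A=\W(\Val(N))$, so that $A_\xi=A/(\xi)$ with quotient map $\res_\xi$; the plan is to follow the Fargues--Fontaine untilting dictionary while keeping track of quantifier-free definability, in the spirit of \cref{tilt interp}. Two ring-theoretic facts will be used throughout. First, $A$ is $p$-torsion free. Second, the assignment $a\mapsto[a]\bmod p$ is a ring isomorphism $\Val(N)\cong A/p$: this follows from the description of Witt vectors over perfect rings of characteristic $p$ recalled above, under which $pA$ is precisely the image of the Verschiebung, so the ``carries'' in $[a]+[b]-[a+b]$ vanish modulo $p$. Since $\Val(N)$ is a domain and $\xi\equiv[\varpi]\bmod p$ with $\varpi\neq 0$, one gets the implications $\xi z\in pA\Rightarrow z\in pA$ and $pw\in(\xi)\Rightarrow w\in(\xi)$.

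\textbf{Parts (1) and (2).} First I would check that $v_\xi$ is independent of the choices of $y$ and $a$ in the preceding lemma. Suppose $[y]\equiv[y']c\pmod\xi$ with $c\in A^\times$. Reducing mod $p$ gives $y=\bar c\,y'+\varpi z$ in $\Val(N)$ for some $z$, which already forces $v(y)=v(y')$ when $v(y),v(y')<v(\varpi)$. In the contrary case one has $v(y),v(y')\geq v(\varpi)$; using $[\varpi]\equiv pb\pmod\xi$ and $pw\in(\xi)\Rightarrow w\in(\xi)$ one descends to $[y/\varpi]\equiv[y'/\varpi]c\pmod\xi$, and since each step lowers $v(y)$ by $v(\varpi)>0$ while values stay $\geq 0$, the descent terminates with $v(y)=v(y')$. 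Hence $v_\xi$ is well-defined, with $v_\xi(\res_\xi x)=\infty$ iff $x\in(\xi)$. Multiplicativity is immediate from $[y]a\cdot[y']a'=[yy'](aa')$, and the ultrametric inequality follows, when $v(y)\leq v(y')$, by writing $y'=yw$ with $w\in\Val(N)$ and applying the lemma to $a+[w]a'$; thus $A_\xi$ is a domain, $v_\xi$ extends to a valuation of $\mathrm{Frac}(A_\xi)$, and the same normal form shows $A_\xi$ is exactly $\{z:v_\xi(z)\geq 0\}$, giving (1). Part (2) is then formal: $|\cdot|_\xi=e^{-v_0\circ v_\xi}$ for the rank-one coarsening $v_0$ of $\Val(N)$, and its kernel is $\{x:y=0\}=(\xi)$.

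\textbf{Parts (3) and (4).} For (3), $A_\xi/(p)=A/(p,\xi)=A/(p,[\varpi])$ since $\xi=[\varpi]-pb$, and under $\Val(N)\cong A/p$ this is $\Val(N)/(\varpi)$, with the identification induced by $\res_\xi\circ[\cdot]$. For (4) I would verify the axioms of $\PERF_{|p|=|\varpi|}$ one by one. By (1)--(2), $(A_\xi,|\cdot|_\xi)$ is a valuation ring with its rank-one coarsening norm, hence a model of $\MVF$ apart from completeness; completeness holds because $A$ is $p$-adically complete and separated and $(\xi)$ is $p$-adically closed in $A$ (which is what the argument in the proof of the preceding lemma establishes), so $A_\xi$ is $p$-adically complete and separated, and $|p|_\xi=|\varpi|\in(0,1)$ (from $\res_\xi([\varpi])=p\,\res_\xi(b)$ with $\res_\xi(b)$ a unit of norm $1$) makes the $p$-adic and the $|\cdot|_\xi$-adic topologies coincide. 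Finally $\res_\xi([\varpi^{1/p}])$ (with $\varpi^{1/p}\in\Val(N)$ since $\Val(N)$ is perfect) has norm $|\varpi|^{1/p}$, and Frobenius is surjective on $A_\xi/(p)\cong\Val(N)/(\varpi)$ because it is on $\Val(N)$; this supplies the last two axioms.

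\textbf{Parts (5) and (6).} Part (5) is the step I expect to be the real work: one must convert the existence proof of the preceding lemma into an explicit uniformly convergent sequence of quantifier-free $(\varpi,b)$-definable predicates. Unwinding that proof gives a recursion --- if $|x_0|>|\varpi|$ then $\res_\xi(x)=\res_\xi([x_0])\cdot(\text{unit})$ and $v_\xi(\res_\xi x)=v(x_0)$; otherwise $\res_\xi(x)=\res_\xi(p)\,\res_\xi(x')$ with $x'=[x_0\varpi^{-1}]b+(x-[x_0])p^{-1}$ and $v_\xi(\res_\xi x)=v(\varpi)+v_\xi(\res_\xi x')$. Here $x-[x_0]$ has zero first coordinate, so it lies in $pA$ and division by $p$ is given coordinatewise by polynomials composed with the (quantifier-free definable) inverse Frobenius; thus each step of the recursion is a quantifier-free $(\varpi,b)$-definable operation on $A$. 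Running the recursion on $x$ and $y$ for $n$ steps computes $v_\xi(\res_\xi x)$ exactly, as $kv(\varpi)+v(x_0^{(k)})$, when it terminates by step $k\leq n$, and otherwise certifies $|\res_\xi x|_\xi\leq|\varpi|^n$; since $D_\xi(\res_\xi x,\res_\xi y)$ equals $|\res_\xi y|_\xi$ when $v_\xi(\res_\xi x)>v_\xi(\res_\xi y)$ and $0$ otherwise, the required comparison of valuations is decided --- when both recursions terminate --- by finitely many instances of $\rD$ in $N$ with parameter $\varpi$, and truncating at step $n$ costs at most $|\varpi|^n\to 0$. For (6), since $A_\xi$ is $p$-adically complete and separated and Frobenius is surjective on $A_\xi/(p)$, the argument of \cref{crucial} (with $p$ in place of $\varpi$) shows $(A_\xi)^\flat=\projlim_{x\mapsto x^p}A_\xi\to\projlim_{x\mapsto x^p}A_\xi/(p)$ is bijective; and $A_\xi/(p)\cong\Val(N)/(\varpi)$ by (3), while \cref{crucial} applied to $N$ itself identifies $\projlim_{x\mapsto x^p}\Val(N)/(\varpi)$ with $\Val(N)^\flat$, which is $\Val(N)$ as $N$ has characteristic $p$ and is perfect. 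Composing and tracing through the explicit inverses --- the Teichm\"uller assignment $x\mapsto(\res_\xi[x^{p^{-i}}])_i$, which lands in $(A_\xi)^\flat$ because $[\cdot]^p=[\cdot^p]$, and $(x_i)_i\mapsto\lim_i x_{i,0}^{p^i}$, the limit being independent of the chosen lifts as in \cref{crucial} --- yields the stated isomorphism, its compatibility with the valued-ring structure being inherited from the two ingredients.
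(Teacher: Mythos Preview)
Your proof is correct and follows essentially the same route as the paper's: the normal form $[y]a$ from the preceding lemma is used to define $v_\xi$, parts (3)--(4) go through the identification $A_\xi/(p)\cong\Val(N)/(\varpi)$, part (5) is the same recursion $x\mapsto x'=[x_0\varpi^{-1}]b+p^{-1}(x-[x_0])$ with uniform error $|\varpi|^n$, and part (6) is the same chain $A_\xi^\flat\cong(\Val(N)/(\varpi))^\flat\cong\Val(N)^\flat\cong\Val(N)$. You are in fact more careful than the paper in two places: you supply an explicit descent argument for the well-definedness of $v_\xi$ (which the paper takes for granted), and you verify completeness of $A_\xi$ by noting that the preceding lemma shows $(\xi)$ is $p$-adically closed (the paper's proof of (4) omits this point).
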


\begin{proof}
\begin{enumerate}
\item First of all, by construction, \(v_\xi(x) = \infty\) if and only if \(x\in
(\xi)\) and \(v_\xi(1) \neq \infty\). Moreover, if \(x,y\in \Val(N)\) and
\(a,b\in\W(\Val(N))^\times\), then \([x]a [y]b = [xy]ab\) and hence \(v_\xi\) is
multiplicative. Let us now assume that \(v_\xi(x) \leq v_\xi(y)\) and let \(z =
yx^{-1}\). Then \([x]a + [y]b = [x](a + [z]b)\equiv [x][s]c \mod \xi\) for some
\(s\in\Val(N)\) and \(c\in\W(\Val(N))^\times\). So \(v_\xi([x]a + [y]b)\geq
v([x]a)\) and \(v_\xi\) does induce a valuation on \(A_\xi\). Also \([y]b = [x][z]b
\in [x]a W(\Val(N))\). So \(A_\xi\) is indeed its valuation ring.

\item The second statement follows immediately from the definition and the fact
that \(\vert\cdot\vert\) is the norm associated to \(v\) on \(N\).

\item The kernel of the map \(\Val(N)\to A_\xi/(p)\) induced by \(\res_\xi\circ[\cdot]\) is the
set of \(x\in \Val(N)\) such that \(v(x)\geq v_\xi(p)\). Since \(p\equiv
[\varpi]b^{-1} \mod\xi\), we have \(v_\xi(p) = v(\varpi)\) and the kernel is
indeed \((\varpi)\).

\item So far, we have shown that \((A_\xi,\vert\cdot\vert)\models\MVF\) and that \(|p|_\xi =
|\varpi|\). Since \(|A_\xi|_\xi = |\Val(N)|\), it is dense in \([0,1]\) and
since \(A_\xi/(p) \cong \Val(N)/(\varpi)\) and \(\Val(N)\) is perfect, the
Frobenius is surjective. This concludes the proof.

\item For any \(x \in \W(\Val(N))\) such that \(x_0 = \varpi z\), we write \(x' =
p^{-1}(x-[x_0]) + b[z]\). Then \(\res_\xi(px') = \res_\xi(x - [x_0] + pb[z]) = \res_\xi(x -
[z]([\varpi] - pb)) = \res_\xi(x)\).

Let \(x,y \in \W(\Val(N))\). If \(x,y\nin(\varpi)\), then \(v_\xi(x) = v(x_0)\),
\(v_\xi(y) = v(y_0)\) and hence \(D_\xi(\res_\xi(x),\res_\xi(y)) = \rD(x_0,y_0)\).
If \(x_0\nin (\varpi)\) but \(y\in (\varpi)\), then
\(D_\xi(\res_\xi(x),\res_\xi(py')) = 0 = \rD(x_0,y_0)\) as \(v_\xi (x) = v(x_0) <
v(\varpi) = v_\xi(p)\leq v_\xi(py')\). If \(x_0\in(\varpi)\) but
\(y_0\nin\varpi\), then \(D_\xi(\res_\xi(x),\res_\xi(y)) =
D_\xi(\res_\xi(px'),\res_\xi(y)) = |y|_\xi = |y_0|\) as \(v_\xi(x) \geq v_\xi(p) >
v_\xi(y)\). Finally if \(x,y\in (\varpi)\), then \[D_\xi(\res_\xi(x),\res_\xi(y)) =
D_\xi(\res_\xi(px'),\res_\xi(py')) = |p|_\xi D_\xi(\res_\xi(x'),\res_\xi(y')).\]

It follows that the quantifier\hyp{}free predicates \(t_n\) defined by \(t_0(x,y) =
0\) and
\[t_{n+1}(x) =
\left\{\begin{array}{ll}
    \rD(x_0,y_0) & \text{if }x_0 \nin(\varpi)\\
    |y_0|& \text{if }x_0\in(\varpi)\text{ and }y_0\nin\varpi\\
    |\varpi| t_n(x',y')&\text{if }x_0,y_0\in(\varpi) \end{array}\right.\]
uniformly converges to \(D_\xi(\res_\xi(x),\res_\xi(y))\).

\item Consider the isomorphisms
\[A_\xi^\flat\cong (\Val(N)/(\varpi))^\flat\cong \Val(N)^\flat\cong
\Val(N),\] where the first one is induced by \(\res_p\), the second is described
in \cref{crucial} and the last one is induced by projection to the first
co\hyp{}ordinate, giving the formula above. The inverse can be checked to be given by
\(x \mapsto (x^{p^{-i}})_i \mapsto (\res_\varpi(x^{p^{-i}}))_i \mapsto
(\res_\xi([x^{p^{-i}}]))_i\).
\qedhere
\end{enumerate}
\end{proof}

Let \(M\models\PERF_{|p| = \alpha}\), where \(0 < \alpha < 1\), and let
\({}^\sharp : \Val(M)^\flat \to \cO(M)\) denote the projection on the first
co\hyp{}ordinate. Recall that, by \cref{crucial}, \(\Val(M)^\flat \cong
(\Val(M)/(p))^\flat\) is a model
of \(\PERF_{|p|=0}\) that we denote
\(M^\flat\) whose valuation and norm we denote \(v^\flat\) and \(\vert\cdot\vert^\flat\)
respectively. The identification of \(v(\Val(M)/(p))\) inside
\(v^\flat(\Val(M)^\flat)\) induces an isomorphism \(v(\Val(M)) =
v^\flat(\Val(M)^\flat)\). The normalization of \(\vert\cdot\vert^\flat\) is chosen so that,
if \(\varpi\in\Val(M)^\flat\) has valuation \(v(p)\), then \(|\varpi|^\flat =
\alpha\).

\begin{fact}
\label{fact theta}
\begin{enumerate}
\item Let \(A\) be a \(p\)-adically complete ring, let \(R\) be a characteristic
\(p\) ring with bijective Frobenius morphism, and \(f : R \to A/(p)\) be a ring
morphism. Then there exists a (unique) ring morphism \(g : \W(R) \to A\) such
that
\[\begin{tikzcd}
\W(R)\arrow[r,"g"]\arrow[d]& A\arrow[d]\\
R \arrow[r,"f" {below}]& A/(p)
\end{tikzcd}\]
commutes. If \(f\) lifts to a multiplicative morphism \(\overline{f} : R \to A\), then for every \(x\in \W(R)\), we have \[g(x) = \sum_i \overline{f}(x_i^{p^{-i}})p^i.\] 
\item For every \(x\in Val(M)^\flat\), we have \(v^\flat(x) = v(x^\sharp)\) and
\(|x|^\flat = |x^\sharp|\).

\item The map \({}^\sharp : \Val(M)^\flat\to \Val(M)\) induces an isomorphism
\(\Val(M)^\flat/(\varpi) \to \Val(M)/(p)\). Also, for every \(x\in \)
\item The map \(\theta : \W(\cO(M)^\flat) \to \cO(M)\) defined by \[\theta
:\sum_i [x_i] p^i\mapsto\sum_i x_i^\sharp p^i\] is a surjective ring
homomorphism with kernel \(([\varpi] - pb)\), for any \(\varpi\in\Val(M)^\flat\)
and \(b\in \W(\Val(M)^\flat)\) such that \(v(\varpi) = v(p)\) and \(\theta(b) =
\varpi^\sharp p^{-1}\) --- in particular, \(v(b_0) = 0\).
\end{enumerate}
\end{fact}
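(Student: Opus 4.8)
The plan is to reduce everything to the universal property of the ring of $p$-typical Witt vectors of a perfect ring of characteristic $p$, which is exactly item~(1), and then read items~(2)--(4) off from \cref{crucial} and the normalization discussion preceding the statement.

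For item~(1) the construction of $g$ is the classical one. Using that the Frobenius is bijective on $R$, together with the congruence ``$a\equiv b\bmod p$ implies $a^{p^{n}}\equiv b^{p^{n}}\bmod p^{n+1}$'' already used in the proof of \cref{crucial}, one checks that for $r\in R$ and any lift $\tilde r_{n}\in A$ of $f(r^{p^{-n}})$ the sequence $(\tilde r_{n}^{p^{n}})_{n}$ is $p$-adically Cauchy; its limit $\overline f(r)$ depends on no choices, reduces to $f(r)$ modulo $p$, and is multiplicative and Frobenius-equivariant. Since $R$ is perfect, every element of $\W(R)$ has a unique Teichmüller expansion $\sum_{i}[x_{i}]p^{i}$, so one sets $g\bigl(\sum_{i}[x_{i}]p^{i}\bigr):=\sum_{i}\overline f(x_{i})p^{i}$; translating Teichmüller coefficients into Witt components turns this into the displayed formula, and when $f$ already lifts multiplicatively one uses that lift, which uniqueness forces to agree with $\overline f$. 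That $g$ is a ring homomorphism is the only computational point: it is checked on each $\W_{n}$ from the universal polynomials defining Witt addition and multiplication (or, since in our application $A=\cO(M)$ is $p$-torsion-free, by passing to ghost components after inverting $p$ and then to the $p$-adic limit). Uniqueness is immediate: any $g$ over $f$ must send $[r]=[r^{p^{-n}}]^{p^{n}}$ to $g([r^{p^{-n}}])^{p^{n}}$ with $g([r^{p^{-n}}])$ a lift of $f(r^{p^{-n}})$, hence $g([r])=\overline f(r)$, and then $p$-adic completeness of $A$ determines $g$ on all of $\W(R)$.

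For item~(2) I would unwind \cref{crucial}: the projection ${}^\sharp\colon\Val(M)^\flat\to\Val(M)$ onto the first coordinate is multiplicative, and transporting addition along $\Val(M)^\flat\cong(\Val(M)/(p))^\flat$ yields $(x+y)^\sharp=\lim_{j}(x_{j}+y_{j})^{p^{j}}$, so $v\bigl((x+y)^\sharp\bigr)\ge\min\{v(x^\sharp),v(y^\sharp)\}$ and $x^\sharp=0$ only for $x=0$ since $\Val(M)$ is a domain; thus $x\mapsto v(x^\sharp)$ is a valuation on $\Val(M)^\flat$, and a short check against the identification $v(\Val(M))=v^\flat(\Val(M)^\flat)$ (using surjectivity of Frobenius on $\Val(M)/(p)$ to produce compatible systems of $p$-power roots) shows it is precisely $v^\flat$; the norm equality follows because $|\cdot|^\flat$ and $|\cdot|$ are the rank-one coarsenings normalized compatibly so that an element of $\Val(M)^\flat$ of valuation $v(p)$ has norm $\alpha=|p|$. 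Item~(3) then drops out: under $\Val(M)^\flat\cong(\Val(M)/(p))^\flat$ the map $\res_{p}\circ{}^\sharp$ is the projection to the first coordinate of $(\Val(M)/(p))^\flat$, surjective exactly because Frobenius is surjective on $\Val(M)/(p)$, and its kernel is $\{x:v^\flat(x)\ge v(p)\}=(\varpi)$ by item~(2), since $\Val(M)^\flat$ is a valuation ring.

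Item~(4) is then item~(1) applied with $R=\cO(M)^\flat=\Val(M)^\flat$, $A=\cO(M)=\Val(M)$ (which is $p$-adically complete), and $f$ the composite $\Val(M)^\flat\twoheadrightarrow\Val(M)^\flat/(\varpi)\xrightarrow{\ \sim\ }\Val(M)/(p)$ of item~(3): it produces a ring homomorphism $\theta\colon\W(\Val(M)^\flat)\to\Val(M)$ over $f$, and since $f$ lifts to the multiplicative map ${}^\sharp$, the formula of item~(1) reads $\theta\bigl(\sum_{i}[x_{i}]p^{i}\bigr)=\sum_{i}x_{i}^\sharp p^{i}$. Surjectivity of $\theta$ follows from surjectivity of ${}^\sharp$ modulo $p$ together with $p$-adic completeness of $\Val(M)$. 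For the kernel, $\theta([\varpi]-pb)=\varpi^\sharp-p\cdot\varpi^\sharp p^{-1}=0$; conversely, if $\theta(x)=0$, then writing $x\equiv[y]a\bmod([\varpi]-pb)$ with $a$ a unit --- via the lemma immediately preceding \cref{untilt facts}, whose proof uses exactly that $\varpi\neq0$ in $\Val(M)^\flat$ to show the $p$-adic remainders are Cauchy --- one gets $0=\theta(x)=y^\sharp\theta(a)$ with $\theta(a)$ a unit, so $y=0$ and $x\in([\varpi]-pb)$. The one genuine obstacle is the ring-homomorphism assertion in item~(1), that is the additivity of the Witt-vector construction; this is why I would cite the classical theory of Witt vectors rather than reprove it, the remaining work being a matter of tracking through the identifications already in place.
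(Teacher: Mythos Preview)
Your argument is correct, but it diverges from the paper's in two places worth noting. For item~(1), the paper avoids the additivity computation altogether: it defines $g_n := \w_n \circ \W_n(f) \circ \W_n(\phi^{-n})\colon \W_n(R)\to A/(p^n)$, which is manifestly a ring homomorphism since each factor is, checks compatibility across $n$, and passes to the inverse limit. This is cleaner than your route through the Teichm\"uller lift and an appeal to the classical theory, though both are standard. For item~(4), the paper does \emph{not} invoke the lemma preceding \cref{untilt facts} to handle the kernel; instead it shows the induced map $\overline{\theta}\colon \W(\Val(M)^\flat)/([\varpi]-pb)\to \Val(M)$ is an isomorphism by observing that modulo $p$ it is exactly the isomorphism of item~(3), then inducting on $n$ (using that $p$ is not a zero divisor in $\Val(M)$) to get an isomorphism modulo $p^n$, and finally using $p$-adic completeness. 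Your approach via the representative lemma is perfectly valid and arguably more direct for the kernel statement alone, but the paper's inductive argument is self-contained and does not forward-reference that lemma. Your treatment of item~(2) is also slightly more roundabout than the paper's, which simply reads off $v^\flat(x)=p^i v(x_i)=v(x_0)=v(x^\sharp)$ from the construction of $v^\flat$ on $(\Val(M)/(p))^\flat$ rather than first showing $x\mapsto v(x^\sharp)$ is a valuation and then identifying it.
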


\begin{proof}
\begin{enumerate}
\item For every \(n\in \Zz_{>0}\) and \(x \in R\), we have \(\w_n(px) = \sum_i
x_i^{p^{-i}}p^n \in p^n A\). It follows that \(\w_n\) induces a well defined
ring morphism \(\w_n : \W_n(A/(p)) \to A/(p)^n\). Then
\[g_n := \w_n \circ
\W_n(f) \circ \W_n(\phi^{-n}) : \W_n(R) \to \W_n(R) \to \W_n(A/(p)) \to
A/(p)^n\] is a ring homomorphism and for every \(x\in\W_n(R)\), we have \(g_n(x)
= \sum_i \overline{x_i^{p^{-n}}}^{p^{n-i}} p^i\), where
\(\overline{f(x_i^{p^{-n}})}\in A/(p)^n\) is any lift of \(f(x_i^{p^{-n}})\in
A\). Since \(\overline{f(x_i^{p^{n+1}})}^p\) is a lift of \(f(x_i^{p^{-n}})\in
A\), it follows that the \(g_n\) are compatible with the projections and hence
yield a ring morphism \(g : \W(R) \to \varprojlim_n A/(p)^n \cong A\). The
formula for \(g(x)\) when \(A \to A/(p)\) admits a multiplicative lifting
follows from the formulas for \(g_n\).

\item Recall that if \(x\in(\Val(M)/(p))^\flat\) is such that \(x_i\neq 0\),
then \(v^\flat(x) = p^{i}v(x_i)\). It follows that if \(x\in \Val(M)^\flat\),
\(v^{\flat}(x) = p^{i}v(x_i) = v(x_0) = v(x^\sharp)\), for \(i \gg 1\). As
\(|\varpi|^\flat =\alpha = |p|\) when \(v^\flat(\varpi) = v(p)\), the
normalizations match and we have \(|x|^\flat = |x^\sharp|\).

\item The map induced by \({}^\sharp\) coincides with the isomorphism
\[\Val(M)^\flat/(\varpi) \cong (\Val(M)/(p))^\flat/(\varpi) \cong
\Val(M)/(p)\] --- the last isomorphisms is described in \cref{tilt facts}.(3).

\item Applying the first statement to the ring morphism \({}^\sharp :
\Val(M)^\flat \to \Val(M) \to \Val(M)/(p)\) yields a ring morphism \(\theta :
\W(\Val(M)^\flat) \to \Val(M)\) which is exactly given by \(x = \sum_i [x_i] p^i
\mapsto \sum_i (x_i)^\sharp p^i\).

By construction, \([\varpi] + pb \in\ker(\theta)\) and \(v(b_0) = v(\theta(b)) =
v(\pi) - v(p) = 0\). There remains to prove that \(\overline{\theta} :
\W(\Val(M)^\flat)/([\varpi] + pb \in\ker(\theta)) \to \Val(M)\) is an
isomorphism. Note that the map \(\W(\Val(M)^\flat)/([\varpi] + pb,p)\cong
\Val(M)^\flat/(\varpi) \to \Val(M)/(p)\), induced by \(\overline{\theta}\)
modulo \(p\), is the map induced by \({}^\sharp\) which is an isomorphism.

Since \(p\) is not a zero divisor in \(\Val(M)\), it now follows by induction
that \(\overline{\theta}\) is an isomorphism modulo \(p^n\) for all \(n>0\); and
hence, since \(\W(\Val(M)^\flat)\) is \(p\)-adically complete,
\(\overline{\theta}\) is an isomorphism.\qedhere
\end{enumerate}
\end{proof}

\begin{lem}
Let \(M\models \PERF_{|p|=\alpha}\), where \(0\leq \alpha < 1\). The set
\(\Omega(M) := \{x\in \Val(M)^{\mathbb{N}} \mid\forall i\in{\mathbb{N}},\ x_{i+1}^p = x_i\}\)
is quantifier\hyp{}free definable.
\end{lem}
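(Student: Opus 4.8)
The plan is to follow the template of the proof of Theorem~\ref{tilt interp}(1): exhibit the distance predicate \(x \mapsto |x - \Omega(M)|\) on \(\Val(M)^{\mathbb{N}}\) as a uniform limit of the quantifier\hyp{}free definable predicates
\[ d_n(x) \;=\; \inf_y \ \sup_{i \leq n} \ |x_i - y^{p^{n-i}}| \, 2^{-i}, \]
where \(y\) ranges over \(\Val(M)\) and \(\sup_{i\le n}\) is a finite maximum. One inequality is immediate and uniform: if \(z \in \Omega(M)\) satisfies \(|x - z| \leq |x - \Omega(M)| + \epsilon\), then, since \(z_i = z_n^{p^{n-i}}\) for \(i \leq n\), taking \(y = z_n\) gives \(d_n(x) \leq \sup_{i \leq n}|x_i - z_i|2^{-i} \leq |x-z|\); hence \(d_n(x) \le |x-\Omega(M)|\). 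Moreover \(d_n \leq d_{n+1}\), as one sees by passing from an approximate witness \(y\) for \(d_{n+1}(x)\) to \(y^p\) for \(d_n(x)\). So everything reduces to the reverse bound \(|x-\Omega(M)| \leq d_n(x) + \epsilon_n\) with \(\epsilon_n \to 0\) uniformly in \(x\).

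Concretely, fixing \(x\) and an approximate witness \(y\) for \(d_n(x)\), so \(|x_i - y^{p^{n-i}}| \leq 2^i\big(d_n(x)+\epsilon\big)\) for \(i \leq n\), I would produce \(z \in \Omega(M)\) with \(|x - z|\) not much larger than \(d_n(x)\). When \(\alpha = 0\) the field \(M\) is perfect (all models of \(\PERF_{|p|=0}\) are), so \(y\) has a unique compatible chain of \(p\)-power roots, and putting \(z_i = y^{p^{n-i}}\) for \(i \leq n\) and \(z_{n+k} = y^{1/p^k}\) for \(k > 0\) yields \(z \in \Omega(M)\) with \(|x - z| \leq \max\big(d_n(x)+\epsilon, 2^{-n-1}\big)\), exactly as in Theorem~\ref{tilt interp}(1). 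When \(\alpha > 0\) this argument breaks down at the one crucial point: \(y\) need not admit a \(p\)-th root in \(\Val(M)\) (for instance \(p\) itself has none in the completion of \(\mathbb{Q}_p(\zeta_{p^\infty})\)), so the chain \(z_{n+1}, z_{n+2}, \dots\) may fail to exist. I would repair this by replacing \(y\) with a nearby element \(\widetilde y\) lying in the image of the coordinate map \({}^\sharp : \Omega(M) \to \Val(M)\), i.e.\ one admitting a compatible chain of \(p\)-power roots; then \(z_i := \widetilde y^{p^{n-i}}\) for \(i \le n\), completed by such a chain above level \(n\), lies in \(\Omega(M)\), and since raising to the \(p\)-th power is \(1\)-Lipschitz on \(\Val(M)\) we get \(|x_i - z_i| \le \max\big(2^i(d_n(x)+\epsilon), |y - \widetilde y|\big)\), so \(|x - z| \le \max\big(d_n(x)+\epsilon + |y-\widetilde y|, 2^{-n-1}\big)\).

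The real content, and the step I expect to be the main obstacle, is therefore to show that \({}^\sharp(\Omega(M))\) is dense in \(\Val(M)\) — equivalently, that any element is approximated arbitrarily well by one carrying a compatible system of \(p\)-power roots. Here I would use the perfectoid axioms: given \(a \in \Val(M)\) and \(\varpi \mid p\) topologically nilpotent, surjectivity of Frobenius on \(\Val(M)/(\varpi)\) lets one extend \(a \bmod \varpi\) to a compatible chain in \((\Val(M)/(\varpi))^\flat\), and \cref{crucial} transports this to an element of \(\Val(M)^\flat = \Omega(M)\) whose zeroth coordinate lies in \({}^\sharp(\Omega(M))\) and is congruent to \(a\) modulo \(\varpi\). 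Combining this with the contraction estimate used in the proof of \cref{crucial} — if \(a \equiv b \bmod \varpi^i\) then \(a^p \equiv b^p \bmod \varpi^{i+1}\) — and the \(\varpi\)-adic completeness of \(\Val(M)\), one upgrades this ``\(\varpi\)-approximation'' to a genuine limit and obtains the required density. Once density is available, the uniform bound \(|x - \Omega(M)| \le d_n(x) + 2^{-n-1}\) follows, and by Remark~\ref{rem:uniform-limits} the uniform limit \(|x - \Omega(M)| = \lim_n d_n(x)\) is a quantifier\hyp{}free definable predicate, so \(\Omega(M)\) is quantifier\hyp{}free definable. This density/correction step for \(\alpha > 0\) is the only ingredient genuinely beyond the argument of Theorem~\ref{tilt interp}(1), where Frobenius is literally surjective.
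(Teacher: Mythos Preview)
Your overall strategy and the predicates \(d_n\) match the paper exactly, and your inequality \(d_n(x)\le|x-\Omega(M)|\) together with the reduction to producing \(z\in\Omega(M)\) from an approximate witness \(y\) are both correct. The gap is in the step you flag as ``the real content'': density of \({}^\sharp(\Omega(M))\) in \(\Val(M)\) is neither established by your sketch nor needed. Your proposed upgrade from a mod-\(\varpi\) approximation to arbitrary precision does not go through: the contraction estimate \(a\equiv b\bmod\varpi^i\Rightarrow a^p\equiv b^p\bmod\varpi^{i+1}\) improves approximations of \emph{powers} of a fixed element, but it does not let you improve \(|a-b|\) for fixed \(a\) with \(b\) ranging over the multiplicative, non-additive set \({}^\sharp(\Omega(M))\); all that Lemma~\ref{crucial} together with Frobenius surjectivity on \(\Val(M)/(p)\) actually give is some \(b\in{}^\sharp(\Omega(M))\) with \(b\equiv a\bmod p\). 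With only \(|y-\widetilde y|\le|p|\) available, your \(1\)-Lipschitz bound yields \(|x_i-z_i|\le\max\bigl(2^i(d_n(x)+\epsilon),|p|\bigr)\), and the resulting estimate for \(|x-\Omega(M)|\) carries an irreducible \(|p|\) term that does not vanish as \(n\to\infty\).

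The paper avoids density altogether by applying the contraction estimate \emph{inside} the coordinate-wise bound rather than routing it through an approximation of \(y\). Take any \(z\in\Omega(M)\) with \(z_n\equiv y\bmod p\); this is exactly the mod-\(p\) surjectivity you already have. Then for \(i\le n\) contraction gives \(z_i=z_n^{p^{n-i}}\equiv y^{p^{n-i}}\bmod p^{\,n-i+1}\), hence
\[
|x_i-z_i|\,2^{-i}\ \le\ \max\!\bigl(d_n(x)+\epsilon,\ |p|^{\,n-i+1}2^{-i}\bigr)\ \le\ \max\!\bigl(d_n(x)+\epsilon,\ \beta^{\,n+1}\bigr),\qquad \beta:=\max\bigl(|p|,\tfrac12\bigr)<1,
\]
while the tail \(i>n\) contributes at most \(2^{-n-1}\le\beta^{\,n+1}\). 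This yields \(|x-\Omega(M)|\le d_n(x)+\beta^{\,n+1}\) uniformly in \(x\), which is all that is required. You have every ingredient; the contraction estimate simply needs to act on the powers \(y^{p^{n-i}}\) directly rather than being spent on an unproved density claim.
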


\begin{proof}
The definable predicates \(d_n(x) := \inf_{y\in\Val/(p)}\sup_{i\leq n} |x_i -
y^{p^{n-i}}|2^{-i}\) converge uniformly to \(d(x,\Omega)\). Indeed for any
\(y\in \Val(M)\), there exists \(z\in \Omega(M)\) such that \(y\equiv z\mod p\)
in which case \(y^{p^{n-i}}\equiv z^{p^{n-i}} \mod p^{n-i}\). It follows that
\(|x-\Omega| \leq d_n(x) \leq |x-\Omega| + 2(\min\{|p|,2^{-1}\})^{n}\).
\end{proof}

If \(a_i\in \Omega(M)\), we write \(a^\flat = (a_i)_i \in \Val(M)^\flat\).

\begin{rem}
\label{equiv norm tilt}
The norm on \(\Omega\) and the pullback of the norm from \(\Val(M)^\flat\) are
equivalent. Indeed the map \(x^\flat \to x_n\) induces an isomorphism
\(\Val(M)^\flat/(\varpi^{p^n})\cong \Val(M)/(p)\) so, \(x^\flat\equiv
y^\flat\mod \varpi^{p^n}\) if and only if \(x_n \equiv y_n \mod p\) --- in which
case \(x_{n-i} \equiv y_{n-i} \mod p^{i+1}\).
\end{rem}

Fix some \(a\in (0,1)\). Let \(\PERF_{|p|=0,|\xi| = \alpha}\) be the
\(\LD\cup\{\varpi_i,b_{i,j}\mid i,j\geq 0\}\)-theory
\[\PERF_{|p|=0}\cup\{|\varpi_{0}| = \alpha, b_{0,0}\in \Val^\times,
\varpi\in\Omega,b_j\in\Omega \mid j\geq 0\}.\] It might seem odd to name
\(p\)-th roots in characteristic \(p\), but we have to if we aim for 
quantifier\hyp{}free interpretations as the inverse Frobenius is not a term in the
language. To alleviate notation, we confuse \(\varpi\) with \(\varpi_0\) and
\(b\) with \(\sum_j [b_{j,0}] p^j\). As before, we \(\xi = [\varpi] - pb\).

Let \(\PERF_{|p|=\alpha,\xi}\) be the \(\LD\cup\{\varpi_i,b_{i,j}\mid i,j\geq
0\}\)-theory \[\PERF_{|p|=0}\cup\{\ker(\theta) = ([\varpi^\flat] - p \sum_{j\geq
0} [b_j^\flat]p^j)\}.\] If \(M\models \PERF_{|p|=\alpha,\xi}\), then \(M^\flat =
(\Val(M)^\flat,(\varpi_i^\flat)^{p^{-i}},(b_{j}^\flat)^{p^{-i}} \mid i,j\geq
0)\) is a model of \(\PERF_{|p|=0,|\xi|=\alpha}\) by \cref{fact theta}.(4).

\begin{thm}
\label{bi-int tilt}
Fix \(\alpha\in(0,1)\). Let \(M \models \PERF_{|p|=\alpha,\xi}\) and
\(N\models \PERF_{|p|=0,|\xi| = \alpha}\).
\begin{enumerate}
\item The map \(\id_\Omega : \Omega(M) \to \Val(M)^\flat\) is a quantifier\hyp{}free interpretation of \(M^\flat\) in \(M\).
\item Let \(f:\Val(N) \to (A_\xi)^\flat\) be the isomorphism of \cref{untilt
facts}.(6). Then \(N_\xi = (A_\xi,\vert\cdot\vert_\xi,f(\varpi),\W(f(b)))\) is a model
of \(\PERF_{|p|=\alpha,\xi}\).
\item The map \(\res_\xi : \W(\Val(N)) \to A_\xi\) is a quantifier\hyp{}free
interpretation of \(N_\xi\) in \(N\).
\item The maps \(\id_\Omega\) and \(\res_\xi\) are a quantifier\hyp{}free
bi\hyp{}interpretation between the theories \(\PERF_{|p|=\alpha,\xi}\) and
\(\PERF_{|p|=0,|\xi|=\alpha}\).
\end{enumerate}    
\end{thm}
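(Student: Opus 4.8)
The plan is to read \cref{bi-int tilt} as the mixed\hyp{}characteristic counterpart of \cref{tilt interp} and to assemble its four items from \cref{crucial}, \cref{fact theta}, \cref{untilt facts}, the quantifier\hyp{}free interpretation criteria of \cref{prop:interpretation-basic-structure} and \cref{prop:qf-interp}, and the lemma just proved that $\Omega$ is quantifier\hyp{}free definable.

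\textbf{Items (1)--(3).} For (1) I would verify the hypotheses of \cref{prop:qf-interp} for $\id_\Omega$ on the domain $\Omega(M)$. Taking $\varpi=p$ in \cref{crucial}, on $\Omega(M)$ the ring $M^\flat=\Val(M)^\flat$ has multiplication computed coordinatewise and sum $(x\pm_\flat y)_i=\lim_j(x_{i+j}\pm y_{i+j})^{p^j}$, and the ``crucial fact'' of \cref{crucial} makes this limit $p$\hyp{}adically uniformly Cauchy, so each coordinate is a uniform limit of $\LD$\hyp{}terms. By \cref{fact theta}.(2) the norm of $M^\flat$ satisfies $|x|^\flat=|x^\sharp|=|x_0|$, so the pullback of its metric is $(x,y)\mapsto\lim_j|x_j-y_j|^{p^j}$, a uniform limit of quantifier\hyp{}free predicates; the pullback of $\rD$ is handled as in \cref{tilt interp}.(2), using that $x\div y$ holds in $M^\flat$ iff $x_0\div y_0$ in $\Val(M)$. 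Since the distinguished constants $\varpi_i,b_{i,j}$ of $M^\flat$ are already constants of $M$, $\id_\Omega$ is a quantifier\hyp{}free interpretation; by \cref{equiv norm tilt} the pulled\hyp{}back pseudometric is equivalent to the subspace metric, so the associated interpretable set is identified isometrically with $M^\flat$. For (2), \cref{untilt facts}.(4) already gives $(A_\xi,|\cdot|_\xi)\models\PERF_{|p|=|\varpi_0|}=\PERF_{|p|=\alpha}$, so one only has to check the extra axiom: under the isomorphism $A_\xi^\flat\cong\Val(N)$ of \cref{untilt facts}.(6) the Teichm\"uller section $\sharp$ becomes $x\mapsto\res_\xi([x])$, hence the map $\theta$ of \cref{fact theta}.(4) for $A_\xi$ becomes $\res_\xi:\W(\Val(N))\to A_\xi$, so $\ker\theta=(\xi)=([\varpi]-pb)$, which is precisely the ideal named in the axiom; the remaining requirements follow from \cref{untilt facts}.(2),(6). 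For (3), I would again apply \cref{prop:qf-interp} to $\res_\xi$ on the definable domain $\W(\Val(N))=\Val(N)^{\mathbb N}$: the ring operations lift to the Witt polynomials over $\Zz$ (coordinatewise $\LD$\hyp{}terms), the pullbacks of the metric and of $\rD$ are $(w,w')\mapsto D_\xi(\res_\xi(0),\res_\xi(w-_\W w'))$ and $(w,w')\mapsto D_\xi(\res_\xi(w),\res_\xi(w'))$, both quantifier\hyp{}free $(\varpi,b)$\hyp{}definable by \cref{untilt facts}.(5), and the distinguished constants of $N_\xi$ are $\res_\xi$ of Teichm\"uller lifts of the named constants of $N$.

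\textbf{Item (4).} It remains to produce the two homotopies. Composing $\id_\Omega$ with $\res_\xi$ gives an interpretation of a model of $\PERF_{|p|=\alpha,\xi}$ in $M$ with underlying set $\W(\Omega(M))=\W(\Val(M)^\flat)$ and quotient $(M^\flat)_\xi=\W(\Val(M)^\flat)/(\xi^\flat)$, where $\xi^\flat$ is the named generator $[\varpi^\flat]-p\sum_j[b_j^\flat]p^j$; by the defining axiom of $\PERF_{|p|=\alpha,\xi}$ one has $(\xi^\flat)=\ker\theta$, so $\theta$ of \cref{fact theta}.(4) descends to an isomorphism $(M^\flat)_\xi\xrightarrow{\sim}\Val(M)=M$, which is definable because $\theta(\sum_i[x_i]p^i)=\sum_i x_i^\sharp p^i$ is a $p$\hyp{}adically uniformly convergent series of $\LD$\hyp{}terms ($|p|=\alpha<1$); hence $\res_\xi\circ\id_\Omega$ is homotopic to $\id_M$. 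Symmetrically, $\id_\Omega\circ\res_\xi$ has quotient $(N_\xi)^\flat=A_\xi^\flat$, and the isomorphism $A_\xi^\flat\xrightarrow{\sim}\Val(N)$ of \cref{untilt facts}.(6), together with its stated inverse $x\mapsto(\res_\xi([x^{p^{-i}}]))_i$, is a uniform limit of $\LD$\hyp{}terms in the named roots $\varpi_i$, hence a definable isomorphism; so $\id_\Omega\circ\res_\xi$ is homotopic to $\id_N$. Both isomorphisms are isometries by \cref{fact theta}.(2) and \cref{untilt facts}.(2). Therefore $\id_\Omega$ and $\res_\xi$ form a quantifier\hyp{}free bi\hyp{}interpretation.

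The real work, I expect, is item (4): checking that the abstract ring isomorphisms produced by \cref{fact theta}.(4) and \cref{untilt facts}.(6) are \emph{definable} in the sense required for a homotopy --- i.e.\ that the relevant $p$\hyp{}adic series and $\varpi$\hyp{}adic projective limits converge uniformly enough to be realized as uniform limits of $\LD$\hyp{}terms --- and that the ideals $\ker\theta$ and $(\xi^\flat)$ match exactly, which is exactly what the tailored axiom of $\PERF_{|p|=\alpha,\xi}$ is there to guarantee. A secondary point of care is tracking the normalizations and the Teichm\"uller\hyp{}lift descriptions of the extra constants consistently on both sides.
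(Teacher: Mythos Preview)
Your proposal is correct and follows essentially the same route as the paper's proof: items (1)--(3) are checked via \cref{prop:qf-interp} using the explicit formulas for the ring operations and the quantifier\hyp{}free definability of $\rD^\flat$ (via ${}^\sharp$) and $D_\xi$ (via \cref{untilt facts}.(5)), and item (4) is established by identifying the two composites with $\theta$ and with the isomorphism of \cref{untilt facts}.(6), both of which are induced by (uniform limits of) terms. Your write\hyp{}up is in fact slightly more explicit than the paper's in a few places --- you spell out the metric pullback formula, invoke \cref{equiv norm tilt} for the equivalence of metrics on $\Omega$, and note the isometry property --- but there is no substantive difference in strategy.
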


\begin{proof}
\begin{enumerate}
\item Note that the map \({}^\sharp : \Omega \to \Val\) given by the first
co\hyp{}ordinate is a term and addition on \(\Omega\) which is given by \(\lim_i
(x_i + y_i)^{p^i}\) is a uniform limit of terms. Also, if \(D^\flat\) is the interpretation of \(\rD\) in
\(M^\flat_\xi\), then \(D^\flat(x,y) = \rD(x^\sharp,y^\sharp)\) is quantifier
free definable. The second statement follows.

\item The following diagram
\[\begin{tikzcd}
\W(\Val(N))\arrow[r,"\res_\xi"]\arrow[d,"\W(f)" {left}]& A_\xi\\
W(A_\xi^\flat) \arrow[ru,"\theta" {below}]
\end{tikzcd}\]
commutes: if \(x = (x_j)_j \in \W(\Val(N))\), then \(\W(f)(x) =
(\res_\xi([x_j^{p^{-i}}]))_{i,j}\) and hence \(\theta(\W(f)(x)) = \sum_j
\res_\xi([x_j^{p^{-j}}]) p^j = \res_\xi(x)\). So the kernel of \(\theta\) is generated by \(\W(f)(\xi) = [f(\varpi_0)] + p \W(f)(b)\).

\item By \cref{untilt facts}.(5), the interpretation of \(\rD\) in \(N_\xi\) is
induced by a quantifier\hyp{}free definable predicate in \(N\) --- and so is \(\vert\cdot\vert\)
since \(|x| = \rD(x,0)\). Since the map \(f\) is induced by \(x \mapsto
(x^{p^{-i}})_i\) it is a term on \(\varpi,b_j\in \Omega\).

\item The double interpretation of \(M\) in itself is
\(\W(\Val(M)^\flat)/(\xi)\). The map \(\theta\), which is induced by a term as
seen in its definition \cref{fact theta}.(4), induces an isomorphism with
\(\Val(M)\).

Conversely, the double interpretation of \(N\) in itself is isomorphic to
\(\Val(N)\) via the map of \cref{untilt facts}.(6) which is induced by a
term.
\qedhere
\end{enumerate}
\end{proof}

Let \(\TPERF_{p,\xi}\) be the \(\Ldivpi\cup\{b_{i,j}\mid i,j\geq 0\}\)-theory
\[\TPERF_{p,\varpi}\cup\{b_{0,0} \in R^\times, b_{i+1,j}^p = b_{i,j}\mid i,j
\geq 0\}.\] If \(M\models \PERF_{|p|=\alpha,\xi}\), then \(M^\natural =
(\Val(M)/(p),\res_p(\varpi_{i}),\res_p(b_{i,j})\mid i,j\geq 0)\) is a model of
\(\TPERF_{p,\xi}\). 

Let \(\alpha\in(0,1)\) and \(R\models \TPERF_{p,\xi}\). Then \(R^\flat_\alpha =
(R^\flat,\vert\cdot\vert_\alpha,(\varpi^\flat)^{p^{-i}},(b_j^{\flat})^{p^{-i}}\mid
i,j\geq 0)\) is a model of \(\PERF_{|p|=0,|\xi|=\alpha}\), where
\(\vert\cdot\vert_\alpha\) is normalized so that \(\varpi^\flat =
(0,\varpi_1,\varpi_2,\ldots)\) has norm \(\alpha\).

\begin{cor}
Fix \(\alpha\in(0,1)\). Let \(M\models \PERF_{|p|=\alpha,\xi}\) and let
\(R\models \TPERF_{p,\xi}\).
\begin{enumerate}
\item The map \(\res_p : \Val(M) \to \Val(M)/(p)\) is a quantifier\hyp{}free
interpretation of \(M^\natural\) in \(M\).
\item It forms a quantifier\hyp{}free bi\hyp{}interpretation with the composition \(\res_\xi \circ \id_\Omega\).
\end{enumerate}
\end{cor}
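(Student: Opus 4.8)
The plan is to read the corollary off the two quantifier-free bi-interpretations already established, Theorem~\ref{tilt interp} and Theorem~\ref{bi-int tilt}, so that essentially no new construction is required.

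For part~(1), I would rerun the argument of Theorem~\ref{tilt interp}.(3) with the topologically nilpotent divisor of $p$ taken to be $p$ itself, which is legitimate because $|p|=\alpha<1$ in every model of $\PERF_{|p|=\alpha,\xi}$. The domain of the interpretation is all of $\Val(M)$; the discrete metric of $M^\natural$ pulls back to the quantifier-free predicate $\min\{1,\alpha^{-1}\rD(p,x-y)\}$, which vanishes exactly when $p\div(x-y)$; the ring operations of $M^\natural$ are induced by the terms $+$, $\cdot$, $-$; the divisibility predicate of $M^\natural$ pulls back to a quantifier-free predicate built from $\rD$, by an argument as in Theorem~\ref{tilt interp}.(3); and the constants $\res_p(\varpi_i)$, $\res_p(b_{i,j})$ of $M^\natural$ are the $\res_p$-images of the corresponding constant symbols of $M$, which are closed terms. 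Propositions~\ref{prop:interpretation-basic-structure} and~\ref{prop:qf-interp} then give that $\res_p$ is a quantifier-free interpretation of $M^\natural$ in $M$.

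For part~(2), the plan is to compose the two bi-interpretations through their common vertex $\PERF_{|p|=0,|\xi|=\alpha}$. First, the $\xi$-augmented version of Theorem~\ref{tilt interp} --- in which $\TPERF_{p,\varpi}$ is enlarged to $\TPERF_{p,\xi}$ and the extra constants $b_{i,j}$ are carried transparently along $\id_\Omega$ and $\res_\varpi$ --- yields a quantifier-free bi-interpretation between $\TPERF_{p,\xi}$ and $\PERF_{|p|=0,|\xi|=\alpha}$. Composing it with Theorem~\ref{bi-int tilt}, and using that quantifier-free interpretations compose, that composites of bi-interpretations are bi-interpretations, and that replacing one half of a bi-interpretation by a homotopic interpretation again gives a bi-interpretation (all routine from the definitions), one obtains a quantifier-free bi-interpretation between $\TPERF_{p,\xi}$ and $\PERF_{|p|=\alpha,\xi}$ whose two halves are $\res_\xi\circ\id_\Omega$ (of $\PERF_{|p|=\alpha,\xi}$ in $\TPERF_{p,\xi}$) and $\res_\varpi\circ\id_\Omega$ (of $\TPERF_{p,\xi}$ in $\PERF_{|p|=\alpha,\xi}$). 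It then remains to observe that $\res_\varpi\circ\id_\Omega$ is homotopic to $\res_p$: since $\id_\Omega$ is the identity on $\Omega(M)=\Val(M)^\flat$ and, via Lemma~\ref{crucial} together with the first-coordinate isomorphism $R^\flat/(\varpi^\flat)\cong R$ of Proposition~\ref{tilt facts}.(3) applied with $R=\Val(M)/(p)$, the interpretation $\res_\varpi\circ\id_\Omega$ is, after the canonical identification of its target with $\Val(M)/(p)$, the map $x\mapsto\res_p(x^\sharp)$, the function $(\res_p)^{-1}\circ(\res_\varpi\circ\id_\Omega)$ is the first-coordinate term $x\mapsto x^\sharp$ followed by the quotient map $\Val(M)\to\Val(M)/(p)$, hence definable. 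Therefore $(\res_p,\ \res_\xi\circ\id_\Omega)$ is a quantifier-free bi-interpretation, as asserted.

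I expect the only genuine work to be the bookkeeping: tracking the constants $\varpi_i$, $b_{i,j}$ and the element $\xi$ through the tilt/untilt layers, checking the transparency of the extra constants in the $\xi$-augmented Theorem~\ref{tilt interp}, and pinning down the homotopy $\res_\varpi\circ\id_\Omega\sim\res_p$ above. As a cross-check one can verify the two homotopies of part~(2) directly: following $M\mapsto M^\natural\mapsto(M^\natural)^\flat_\alpha\mapsto((M^\natural)^\flat_\alpha)_\xi$ one obtains an isomorphism $((M^\natural)^\flat_\alpha)_\xi\cong\Val(M)$ from the term $\theta$ of Fact~\ref{fact theta}.(4) (using $\ker\theta=(\xi)$ for $M\models\PERF_{|p|=\alpha,\xi}$ and Lemma~\ref{crucial}), while $((R^\flat_\alpha)_\xi)^\natural\cong\W(R^\flat)/(\xi,p)\cong R^\flat/(\varpi^\flat)\cong R$ by Proposition~\ref{tilt facts}.(3); both checks are routine given those earlier results.
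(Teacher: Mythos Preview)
Your proposal is correct. Part~(1) matches the paper's argument exactly. For part~(2), you take a slightly different route: you obtain the bi-interpretation by composing the two already-established bi-interpretations through the common vertex $\PERF_{|p|=0,|\xi|=\alpha}$, and then replace one half of the composite, $\res_\varpi\circ\id_\Omega$, by the homotopic $\res_p$. The paper instead directly verifies the two composite homotopies for the pair $(\res_p,\ \res_\xi\circ\id_\Omega)$ --- that $\W(R^\flat)/(\xi,p)\cong R^\flat/(\varpi^\flat)\cong R$ via the first-coordinate projection, and that $\W((\Val(M)/(p))^\flat)/(\xi)\cong\Val(M)$ via $\theta$, each induced by a (limit of) term(s). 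This is precisely what you describe as your ``cross-check'', so your secondary argument is the paper's primary one. Your compositional approach is a bit more conceptual and makes clearer why no new work is needed; the paper's direct verification is shorter on the page and avoids the lemma that replacing a half of a bi-interpretation by a homotopic one preserves bi-interpretability.
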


\begin{proof}
\begin{enumerate}
\item As in \cref{tilt interp}.(3), this follows from the fact that the
predicate \(\res_p(x)\div\res_p(y)\) is quantifier\hyp{}free definable.
\item The isomorphism \(W(R^\flat)/(\xi,p) \cong R^\flat/(\varpi) \cong R\) is
induced by the projection on the first co\hyp{}ordinate. Conversely, the map
\({}^\sharp : (\Val(M)/(p))^\flat \cong \Val(M)^\flat \to \Val(M)\) is induced
by the uniform limit of terms \(\lim_i x_i^{p^i}\) and so the isomorphism \(\theta :
\W((\Val(M)/(p))^\flat)/(\xi) \cong \W(\Val(M)^\flat)/(\xi) \cong \Val(M)\) is also induced by a uniform limit of terms.\qedhere
\end{enumerate}
\end{proof}

In conclusion, we have the following quantifier\hyp{}free bi\hyp{}interpretations.

\[\begin{tikzcd}
\PERF_{|p|=\alpha,\xi}
\ar[rr,shift left,"\flat"]
\ar[dr,"\natural" {left,outer sep=5pt}]
&&\PERF_{|p|=0,|\xi|=\alpha}
\ar[ll,shift left,"._\xi"]
\ar[ld,shift left,"\natural"]\\
&\TPERF_{p,\xi}\ar[ur,shift left,"\flat"]
\end{tikzcd}\]

\begin{rem}
\label{canon bi-int}
Note that the parameters here are not canonical, but the interpretation above
also induce bi\hyp{}interpretations (not necessarily quantifier\hyp{}free) between
\(\PERF_{|p|=\alpha}\), \(\TPERF_{p,(\xi)}\) and
\(\PERF_{|p|=0,|(\xi)|=\alpha}\) where, in the two last theories, we name the
ideal generated by \(\xi\).

Starting with a model \(N\) of \(\PERF_{|p|=0}\) we thus get a uniform family of interpretations parametrized by the set \(Y^{1} = \{([\varpi] + pb) \mid 0<|\varpi|<1\text{ and }b_0\in \Val(N)^\times\}\). Note that this is a type-definable subset of the interpretable set \(\W(\Val(N))/\W(\Val(N))^\times\) --- equipped with the norm \(|x - y\W(\Val(N))^\times| = |x_0 - y_0\Val(N)^\times|\).

If we fix some \(\varpi \in \Val(N)\) with \(0<|\varpi|<1\), we can consider \(X^1 = \{\xi\in Y^1 \mid v(\varpi) \leq v(\xi_0) < p v(\varpi)\}\). This is a \(\varpi\)-interpretable set whose points parametrize all untilts up to the the identification induced by the Frobenius automorphism on \(N\). This set corresponds to the degree one points on the Fargues-Fontaine curve.
\end{rem}

\begin{rem}
\label{rem:BY-MVF}
If we consider Ben Yaacov's theory of metric valued fields \cite{BY-MVF}, then
the tilt is also interpretable. However, it is unclear if we can interpret
untilts, or even axiomatize perfectoid fields, as the valuation ring is not
definable.  
\end{rem}

\section{Consequences}
\label{sec:consequences}
That the tilt-untilt correspondence is mediated by a quantifier\hyp{}free
bi\hyp{}interpretation gives fundamental properties of this correspondence as formal
consequences. 

\subsection{Tilting elementary equivalence}

The tilt, being a bi\hyp{}interpretation, preserves elementary equivalence as
continuous \(\Lrg\)-structures. However, \cref{Ldiv equiv Lrg} allows us to
recover results on the discrete \(\Ldiv\)-structure.

\begin{prop}
Fix \(\alpha\in(0,1)\) and \(M,N \models \PERF_{|p| = \alpha}\) have henselian valuation and bounded regular rank valuation group --- for example the valuation is rank one. Let \((\xi_M)
\subseteq \W(\Val(M^\flat))\) (resp. \((\xi_N)\subseteq \W(\Val(N^\flat))\))
denote the kernel of \(\theta\). Then \(M\) and \(N\) are
\(\Ldiv\)-elementarily equivalent if and only of \((M^\flat,(\xi_M))\) and
\((N^\flat,(\xi_N))\) also are.
\end{prop}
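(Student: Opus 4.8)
The plan is to combine the quantifier-free bi-interpretation machinery from \cref{bi-int tilt} with the transfer principle \cref{Ldiv equiv Lrg} relating the discrete $\Ldiv$-structure and the continuous $\Lrg$-structure on a perfectoid field. The key point is that the bi-interpretation is about \emph{continuous} $\Lrg$-structures (or $\LD$-structures), so it preserves continuous elementary equivalence; to get down to the discrete $\Ldiv$-level one must cross back and forth using \cref{Ldiv equiv Lrg}. Concretely, I would argue as follows.

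\smallskip
First I would reduce the $\Ldiv$-statement on $M$ and $N$ to an $\Lrg$-statement. Since $M,N \models \PERF_{|p|=\alpha}$ have henselian valuation with bounded regular rank value group, \cref{Ldiv equiv Lrg}.(1) says that if $M$ and $N$ are $\Lrg$-elementarily equivalent then they are $\Ldiv$-elementarily equivalent; conversely, fixing a topologically nilpotent $\varpi_M \in \Val(M)$ with $0<|\varpi_M| <1$ and a matching $\varpi_N$, \cref{Ldiv equiv Lrg}.(2) gives the reverse implication once one has named such a uniformizer. So ``$M \equiv_{\Ldiv} N$'' is equivalent (after a harmless choice of matching parameters) to ``$M \equiv_{\Lrg} N$''. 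The same applies to $M^\flat$ and $N^\flat$: these are models of $\PERF_{|p|=0}$, hence perfect characteristic $p$ metric valued fields, and one must check they fall under the hypotheses of a characteristic-$p$ analogue of \cref{Ldiv equiv Lrg} --- here the henselianity and bounded regular rank of $v^\flat$ follow from those of $v$ via the identification $v(\Val(M)) = v^\flat(\Val(M)^\flat)$ recorded before \cref{fact theta}, together with \cref{tilt facts}. I would note that naming the ideal $(\xi_M)$ on the $M^\flat$ side corresponds, under the bi-interpretation, precisely to naming the parameters $(\varpi,b)$ that pin down the untilt, so ``$(M^\flat,(\xi_M)) \equiv (N^\flat,(\xi_N))$'' as continuous structures is exactly the parametrized comparison that \cref{bi-int tilt} controls.

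\smallskip
The heart of the argument is then: by \cref{bi-int tilt}.(4), $M \rightsquigarrow M^\flat$ (with its distinguished parameters, equivalently the ideal $(\xi_M)$) is a quantifier-free bi-interpretation with the untilt construction $N_\xi$, and likewise for $N$. Since a bi-interpretation induces, at the level of theories, a bijective correspondence between definable predicates (via the translations $P \mapsto P^I$ discussed after \cref{prop:interpretable-sets-and-interpretations}) that is compatible with the homotopies witnessing $I\circ J \sim \id$ and $J \circ I \sim \id$, it preserves and reflects elementary equivalence: two models $M, N \models \PERF_{|p|=\alpha,\xi}$ are $\Lrg$-(equivalently $\LD$-)elementarily equivalent if and only if their tilts $M^\flat, N^\flat$ are elementarily equivalent as models of $\PERF_{|p|=0,|\xi|=\alpha}$. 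Stripping the auxiliary roots $\varpi_i, b_{i,j}$ back down to just the ideal $(\xi)$ (as in \cref{canon bi-int}) gives the same equivalence with $(M^\flat,(\xi_M))$ and $(N^\flat,(\xi_N))$ in the language naming only the ideal. Chaining the three equivalences --- $M \equiv_{\Ldiv} N \iff M \equiv_{\Lrg} N \iff M^\flat \equiv N^\flat$ with the ideal named $\iff (M^\flat,(\xi_M)) \equiv_{\Ldiv} (N^\flat,(\xi_N))$, the last step again by the characteristic-$p$ version of \cref{Ldiv equiv Lrg} --- yields the proposition.

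\smallskip
\textbf{Main obstacle.} The genuinely delicate step is the last $\Ldiv \leftrightarrow \Lrg$ translation \emph{on the tilt side}: I must ensure that $(M^\flat, v^\flat)$ satisfies the hypotheses needed to invoke \cref{Ldiv equiv Lrg} (or its positive-characteristic analogue) --- namely that $v^\flat$ is henselian and its value group has bounded regular rank --- and that naming the ideal $(\xi_M)$ in the discrete $\Ldiv$-language is compatible with naming the corresponding continuous-logic parameters. Henselianity of $v^\flat$ and the bounded-regular-rank property should transfer from $(M,v)$ because $v^\flat(\Val(M)^\flat) \cong v(\Val(M))$ as ordered groups and $R^\flat$ is $\varpi^\flat$-adically complete (hence the valuation is henselian, by \cref{untilt facts}.(4) and \cref{tilt facts}); but one should also confirm that the interpretability of the $\Lrg$-structure in the $\Ldiv$-structure with an ideal parameter (rather than an element parameter) still goes through, since $\xi_M$ is only well-defined up to the Frobenius action, as flagged in \cref{canon bi-int}. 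Once that bookkeeping is in place, the rest is a formal chase through the bi-interpretation.
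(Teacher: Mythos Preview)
Your approach is essentially the paper's: reduce $\Ldiv$-equivalence to $\Lrg$-equivalence via \cref{Ldiv equiv Lrg}, transport $\Lrg$-equivalence across the bi-interpretation of \cref{bi-int tilt} (in the form of \cref{canon bi-int} with only the ideal named), and then apply \cref{Ldiv equiv Lrg} again on the tilt side. Two small remarks: first, there is no separate ``characteristic-$p$ analogue'' to invoke --- \cref{Ldiv equiv Lrg} is stated for all $\alpha<1$, so it already covers $\PERF_{|p|=0}$; second, the bookkeeping you flag (henselianity and bounded regular rank of $v^\flat$, and passing from the ideal $(\xi)$ to a generator with matching type) is exactly what the paper handles, equally tersely, by noting that the value groups of $M$ and $M^\flat$ coincide and that $(\xi_M)$ is principal so one may choose generators of the same type before naming them.
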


\begin{proof}
If \(M\) and \(N\) are \(\Ldiv\)-elementarily equivalent, then they are
\(\Lrg\)-elementarily equivalent (see \cref{Ldiv equiv Lrg}.2). As tilting is a
bi\hyp{}interpretation (see \cref{canon bi-int}), it follows that
\((M^\flat,(\xi_M))\) and \((N^\flat,(\xi_N))\) are \(\Lrg\)-elementarily
equivalent. By \cref{Ldiv equiv Lrg}.1, they also are \(\Ldiv\)-elementarily
equivalent --- note that the ideal \((\xi_M)\) is principal, so choosing
generators with the same type, we see that they remain elementarily equivalent
when the ideal is named.

Conversely, if \((M^\flat,(\xi_M))\) and \((N^\flat,(\xi_N))\) are
\(\Ldiv\)-elementarily equivalent, then they are \(\Lrg\)-elementarily
equivalent --- note that \(v(\xi_{M,0})\) (resp. \(v(\xi_{N,0})\)) is well
defined, allowing us to interpret the \(\Lrg\)-structure in the
\(\Ldiv\)-structure. As before, by bi\hyp{}interpretation, \(M\) and \(N\) are
\(\Lrg\)-elementarily equivalent and hence they are \(\Ldiv\)-elementarily
equivalent.
\end{proof}

\subsection{Fontaine\hyp{}Wintenberger isomorphism}
\label{sec:FW}

What goes by the name of the
Fontaine\hyp{}Wintenberger theorem 
is (an elaboration of) the assertion  
that for \( K \) a perfectoid
field of mixed characteristic,
there is a canonical isomorphism 
between the absolute Galois
group of \( K \) and 
the absolute Galois group of
its tilt \( K^\flat \).  Let us
note here
that this theorem follows 
formally from the correspondence being a quantifier\hyp{}free bi\hyp{}interpretation (\cref{bi-int tilt}).

Let us start by noting that tilting 
and untilting transform algebraically 
closed fields to algebraically closed
fields. This result is well-known.  Our point
in presenting it is to show that the explicit
computations based on Krasner's Lemma which 
usually go into its proof may be replaced
with the observation that our tilt/untilt 
bi-interpretation preserves existential closedness.

\begin{prop}
\label{prop:macvf-tilts}
If \(M \models \ACMVF_{0,p} \), then 
\(M^\flat \models \ACMVF \). 
Likewise, if \(N \models \ACMVF_p \) and we choose 
\(\xi\) and \(\alpha\) so that
\((N,\xi) \models \PERF_{|p|=0,|\xi|=\alpha}\), 
then \(N_\xi \models \ACMVF \).
\end{prop}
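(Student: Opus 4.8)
The plan is to deduce this formally from \cref{cor:binterp-ec} and the quantifier-free bi-interpretation of \cref{bi-int tilt}, once models of $\ACMVF$ are recognised, inside the perfectoid theories, as exactly the existentially closed models. So the first step is to prove the following dictionary: if $K$ is a model of $\PERF_{|p|=\alpha,\xi}$ or of $\PERF_{|p|=0,|\xi|=\alpha}$, then $K\models\ACMVF$ if and only if $K$ is existentially closed among models of its ambient perfectoid theory.

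One direction of the dictionary is immediate: $\ACMVF$ is the model completion of $\MVF$ (\cref{ACMVF EQ}), so a model of $\ACMVF$ is existentially closed among all models of $\MVF$, hence a fortiori among the perfectoid ones, which form a subclass. For the converse, every model of either perfectoid theory is already non-discrete (this is part of the perfectoid axioms, literally so in the equicharacteristic case and via density of the norm group in the positive reals in the mixed case), so the only point left is the axiom scheme $\sup_x\inf_y|P(x,y)| = 0$ for $P\in\Zz[x,y]$ monic in $y$. I would verify this by embedding $K$ into its completed algebraic closure $L$ and noting that $L$ is again a model of the same expanded perfectoid theory: it is perfect in equicharacteristic $p$, has surjective Frobenius on $\Val(L)/(p)$ in mixed characteristic, is complete and non-discrete of the same residue characteristic, keeps $|p|$ unchanged, and retains the named constants $\varpi_i$, $b_{i,j}$ together with the remaining defining conditions of the theory, all of which survive a valued field extension. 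Since $L$ is algebraically closed, $\inf_y|P(a,y)|^L = 0$ for every parameter tuple $a$ from $K$; as $|P(x,y)|$ is a quantifier-free predicate and $K\preceq_\exists L$, the same equality holds in $K$, and letting $a$ and $P$ vary yields $K\models\ACMVF$.

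Granting the dictionary, the proposition is quick. Given $M\models\ACMVF_{0,p}$, which we may expand to a model of $\PERF_{|p|=\alpha,\xi}$ (a compatible system of $p$-power roots of $p$ being available since $M$ is algebraically closed), the dictionary makes $M$ existentially closed among models of $\PERF_{|p|=\alpha,\xi}$; since the map $\id_\Omega$ realizing the interpretation $M^\flat$ in $M$ is half of the quantifier-free bi-interpretation of \cref{bi-int tilt}, \cref{cor:binterp-ec} makes $M^\flat$ existentially closed among models of $\PERF_{|p|=0,|\xi|=\alpha}$, and the dictionary then gives $M^\flat\models\ACMVF$. The untilting case is the mirror image: if $N\models\ACMVF_p$ and $\xi,\alpha$ are chosen so that $(N,\xi)\models\PERF_{|p|=0,|\xi|=\alpha}$, then $N$ is existentially closed among models of $\PERF_{|p|=0,|\xi|=\alpha}$, the map $\res_\xi$ realizing $N_\xi$ in $N$ is the complementary half of the same bi-interpretation, and \cref{cor:binterp-ec} together with the dictionary gives $N_\xi\models\ACMVF$.

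The step I expect to be the main obstacle is the one buried in the dictionary, namely checking that the completed algebraic closure of a perfectoid field is still a model of the relevant expanded perfectoid theory: algebraic closedness of the completion rests on the coarsened norm $|\cdot|$ being of rank one (Krasner's lemma), and one must confirm that the link between $\xi$ and $\theta$ — in particular that $\ker\theta = ([\varpi^\flat] - p\sum_j[b_j^\flat]p^j)$ in the mixed case — is inherited by $L$. An alternative that sidesteps the perfectoid literature is to adjoin one approximate root of $P(a,Y)$ at a time to $K$ and then complete the perfection, which is all that the existential-closedness argument requires.
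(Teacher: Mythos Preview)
Your proposal is correct and follows the same overall strategy as the paper: recognise models of \(\ACMVF\) as the existentially closed models inside the relevant perfectoid theory, then apply \cref{cor:binterp-ec} to the quantifier-free bi-interpretation of \cref{bi-int tilt}.

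The one place where you diverge from the paper is in establishing the ``dictionary''. You prove the backward direction by embedding \(K\) into its completed algebraic closure \(L\) and verifying that \(L\) is again a model of the expanded perfectoid theory --- and you correctly flag the verification that \(L\) still satisfies the \(\xi\)-condition as the main obstacle. The paper bypasses this entirely: it quotes \cite[Theorem~2.4]{BY-MVF} for the characterisation of \(\ACMVF\) as the existentially closed models of \(\MVF\), and then applies \cref{lem:cotheory-ec} with \(T = \MVF\) and \(T'\) the perfectoid theory, using only that every model of \(\MVF\) has a perfectoid extension. This is shorter and avoids any need to check that algebraic closure or completion preserves the named \(\xi\)-data. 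Your alternative suggestion at the end (adjoin approximate roots and complete the perfection) is in spirit exactly the ingredient the paper uses, just packaged through \cref{lem:cotheory-ec}.
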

\begin{proof}
By 
Corollary~\ref{cor:binterp-ec}, 
\(M^\flat\) and \(N_\xi\) are existentially closed 
relative to \(\PERF_{|p|=0}\) and \(\PERF_{|p|=\alpha}\) respectively.
By~\cite[Theorem 2.4]{BY-MVF}, the 
models of \(\ACMVF\) are precisely the 
existentially closed structures
relative to \(\MVF\). As every model of \(\MVF\) has a perfectoid extension,  
it follows from
Lemma~\ref{lem:cotheory-ec},
that \(M^\flat\) and \(N_\xi\) are existentially closed 
relative to \(\MVF\).  Hence, 
\(M^\flat \models \ACMVF\)
and \(N_\xi \models \ACMVF\).  
That is, they are both algebraically
closed.
\end{proof}

As pointed out in \cite{FarFon}, the Fontaine\hyp{}Wintenberger theorem 
now follows using standard ideas in Galois theory. For the sake of 
completeness, we reproduce those 
here.

As is standard, 
for a field \(K\) we write 
\(K^\mathrm{alg}\) for 
its algebraic closure and 
if \(K \subseteq L\) is a 
subfield of a complete field 
\(L\), then we write 
\(\widehat{K}\) for 
the completion of 
\(K\) realized as a subfield 
of \(L\).

\begin{lem}
\label{lem:tilt-algebraic-closure}
If \( M \models \PERF_{|p| = \alpha} \) 
with \( \alpha > 0 \), then 
\( (\widehat{\alg{M}})^\flat = \widehat{\alg{(M^\flat)}} \).  
If  \( N \models \PERF_{|p| = 0,|\xi|=\alpha} \), then 
\( \widehat{\alg{N}}_\xi = \widehat{\alg{(N_\xi)}} \).
\end{lem}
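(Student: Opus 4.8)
The plan is to deduce the identity from two facts already in hand: that tilting ($\flat$, i.e.\ $\id_\Omega$) and untilting ($._\xi$, i.e.\ $\res_\xi$) are mutually inverse quantifier\hyp{}free interpretations (\cref{bi-int tilt}), and that each of them carries an algebraically closed complete metric valued field to an algebraically closed complete metric valued field (\cref{prop:macvf-tilts}). I will write out the first statement; the second is proved identically, with the roles of $\flat$ and $._\xi$ exchanged.

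First I would fix Fargues--Fontaine data so that $M\models\PERF_{|p|=\alpha,\xi}$ and record that $\widehat{\alg{M}}$ is again a model of $\PERF_{|p|=\alpha,\xi}$ extending $M$: it is complete, and algebraically closed (being the completion of an algebraically closed field), hence has surjective Frobenius and divisible value group; the norm of $p$ is unchanged; and, since the named parameters and the valuations of $p$ and $\varpi_0$ are inherited from $M$, the kernel condition on $\theta$ persists by \cref{fact theta}(4). Since tilting is a quantifier\hyp{}free interpretation it restricts to substructures (\cref{prop:qf-interp}), so $M^\flat\subseteq(\widehat{\alg{M}})^\flat$; by \cref{prop:macvf-tilts}, $(\widehat{\alg{M}})^\flat$ is algebraically closed, and it is complete. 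An algebraically closed complete valued field containing $M^\flat$ contains the completed algebraic closure $\widehat{\alg{(M^\flat)}}$, so one inclusion is immediate.

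For the reverse inclusion I would set $L:=\widehat{\alg{(M^\flat)}}\subseteq(\widehat{\alg{M}})^\flat$, which by the same remarks is a complete, algebraically closed model of $\PERF_{|p|=0,|\xi|=\alpha}$ with $M^\flat\subseteq L\subseteq(\widehat{\alg{M}})^\flat$. Untilting this chain (untilting, too, restricts to substructures) and transporting it along the natural isomorphism $\eta_{\widehat{\alg{M}}}\colon((\widehat{\alg{M}})^\flat)_\xi\xrightarrow{\sim}\widehat{\alg{M}}$ coming from the bi\hyp{}interpretation --- which, by naturality, carries $(M^\flat)_\xi$ onto $M$ --- produces a subfield $K:=\eta_{\widehat{\alg{M}}}(L_\xi)$ with $M\subseteq K\subseteq\widehat{\alg{M}}$; moreover $K\cong L_\xi$ is complete, and algebraically closed by \cref{prop:macvf-tilts} applied to the algebraically closed characteristic $p$ field $L$. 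The elementary point is then that any complete algebraically closed $K$ with $M\subseteq K\subseteq\widehat{\alg{M}}$ must equal $\widehat{\alg{M}}$: being algebraically closed it contains $\alg{M}$, and being complete it then contains its closure $\widehat{\alg{M}}$. Hence $L_\xi=((\widehat{\alg{M}})^\flat)_\xi$; tilting this identity and transporting along the other natural isomorphism of the bi\hyp{}interpretation yields $L=(\widehat{\alg{M}})^\flat$, which together with the previous paragraph is the claimed equality.

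The one genuinely delicate point, I expect, is the bookkeeping in that last step: one must invoke ``the double interpretation is homotopic to the identity'' in the form of \emph{natural} isomorphisms $(K^\flat)_\xi\cong K$ and $(N_\xi)^\flat\cong N$ commuting with substructure inclusions, so that ``$L_\xi$ lies between $M$ and $\widehat{\alg{M}}$'' and the final cancellation become literal equalities of subfields of a fixed ambient field rather than merely abstract isomorphisms. Everything else --- the perfectoid axioms for $\widehat{\alg{M}}$, completeness of the interpreted structures, and density of an algebraic closure in its completion --- is routine.
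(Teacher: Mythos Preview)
Your argument is correct and follows essentially the same route as the paper: establish the inclusion $\widehat{\alg{(M^\flat)}}\subseteq(\widehat{\alg{M}})^\flat$ via \cref{prop:macvf-tilts}, untilt the intermediate field through the bi\hyp{}interpretation to obtain a complete algebraically closed subfield of $\widehat{\alg{M}}$ containing $M$, and conclude by minimality. Your treatment is in fact more careful than the paper's on the point you flag as delicate --- the naturality of the isomorphisms $(K^\flat)_\xi\cong K$ with respect to substructure inclusions --- which the paper passes over in a single clause.
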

\begin{proof}
By Proposition~\ref{prop:macvf-tilts},
\(\widehat{\alg{(M^\flat)}}\) is 
(or may be realized as) a 
subfield of \((\widehat{\alg{M}})^\flat\).
Since tilting is part of a bi\hyp{}interpretation between \( \PERF_{|p|=\alpha,\xi}\) and \(\PERF_{|p|=0,|\xi|=\alpha}\), there is some perfectoid
substructure \(M' \subseteq \widehat{\alg{M}} \) with 
\( (M')^\flat = \widehat{(M^\flat)^\mathrm{alg}} \)
and \( M' = (\widehat{(M^\flat)^\mathrm{alg}})_\xi \).
Applying Proposition~\ref{prop:macvf-tilts} again, \( M' \models \ACMVF \).
Hence, as \(\widehat{\alg{M}}\) 
is the smallest with respect to inclusion complete algebraically closed subfield of itself containing \(M\), 
\( M' = \widehat{\alg{M}}\), 
from which we conclude that 
\((\widehat{\alg{M}})^\flat = 
\widehat{(M^\flat)^\mathrm{alg}}\).

The argument for the untilt is 
essentially the same.
\end{proof}

\begin{rem}
Kedlaya and Temkin show in~\cite{Kedlaya-Temkin} that if \(k\) is a field
positive characteristic and \(K := \widehat{\alg{k((t))}}\) is the completion of
the algebraic closure of the field of Laurent series over \(k\), then there are
non-surjective continuous field endomorphisms of \(K\).  Shahoseini and
Pasandideh observe with~\cite[Example 29]{Shahoseini-Pasandieh-intro-perfectoid}
that it follows from this theorem of Kedlaya and Temkin that there is a
complete algebraically closed subfield \(L\) of \(\mathbb{C}_p^\flat = K\),
where \( \mathbb{C}_p\) is the completion of an algebraic closure of the field
\(\mathbb{Q}_p\) of \(p\)-adic numbers, for which \(L\) is not the tilt of any
substructure of \(\mathbb{C}_p\).

Such a field cannot contain any parameter \(\xi\) used to realize \(\mathbb{C}_p
= (\mathbb{C}_p^\flat)_\xi \). For example, we can choose \(\xi = [t]- p\). In
that case, any complete algebraically closed subfield of \(K\) containing \(t\)
must indeed be equal to \(K\).
\end{rem}

\begin{cor}
\label{cor:Galois-to-Galois}
If \( M \models \PERF_{|p| = \alpha} \) 
with \( \alpha > 0 \), then tilting induces an 
isomorphism of topological groups between 
the absolute Galois group, \(\operatorname{Gal}(\alg{M}/M)\), of \(M\)
and the absolute Galois group, \(\operatorname{Gal}((M^\flat)^\mathrm{alg}/M^\flat)\),
of its tilt. 
\end{cor}
\begin{proof}
Let \(\xi\) be the Fargues\hyp{}Fontaine parameter for which \(M = (M^\flat)_\xi\).  

Since the tilt operation is functorial, we obtain a continuous map 

\[\operatorname{Aut}_{\operatorname{cont}}(\widehat{M}^\mathrm{alg}/M) 
\to \operatorname{Aut}_{\operatorname{cont}}((\widehat{M}^\mathrm{alg})^\flat/M^\flat)  =
\operatorname{Aut}_{\operatorname{cont}}(\widehat{(M^\flat)^\mathrm{alg}}/M^\flat) \]

where the final equality comes from Lemma~\ref{lem:tilt-algebraic-closure}.

As for any complete field \(K\) every field automorphism of an algebraic extension which leaves \(K\) 
fixed pointwise is continuous, we have a natural identification 
\[\operatorname{Gal}(K^\mathrm{alg}/K) = \operatorname{Aut}_{\operatorname{cont}}(\widehat{K}^\mathrm{alg}/K) \text{ .} \]
Applying this observation to \(M\) and \(M^\flat\), the map coming from the functoriality of tilting may be
seen as a continuous map 
\[ \operatorname{Gal}(\alg{M}/M) \to \operatorname{Gal}((M^\flat)^\mathrm{alg}/M^\flat) \text{ .}\]

Likewise, functoriality of untilting with the parameter \(\xi\) gives rise to a continuous map 
\[ \operatorname{Gal}((M^\flat)^\mathrm{alg}/M^\flat) \to \operatorname{Gal}(\alg{M}/M) \text{ .}\]

As tilting and untilting form a bi-interpretation, these two operations are inverses of each other and the 
the corresponding maps on automorphism groups are inverses.  That is, the displayed maps are isomorphisms.
\end{proof}

As a consequence of Corollary~\ref{cor:Galois-to-Galois} we see that finite extensions of perfectoid fields are 
perfectoid and tilting preserves degrees of field extensions.

\begin{cor}
\label{cor:finite-perfectoid}
If \(K \models \PERF_{|p|=\alpha} \) and 
\( L/K \) 
is a finite extension, 
the \(L \models \PERF_{|p|=\alpha}\) and 
\([L:K] = [L^\flat:K^\flat]\). 
\end{cor}
\begin{proof}
Fix an embedding \(L \subseteq K^\mathrm{alg}\) and the Fargues\hyp{}Fontaine parameter \(\xi\) for which 
\(K = (K^\flat)_\xi\).

Let \(\rho:\operatorname{Gal}(K^\mathrm{alg}/K) \to
\operatorname{Gal}((K^\flat)^\mathrm{alg}/K^\flat)\) be the isomorphism of
absolute Galois groups given by tilting.  Let \(L' :=
\widehat{(K^\flat)^\mathrm{alg}}^{\rho(\operatorname{Gal}(K^\mathrm{alg}/L))} \)
be the fixed field in the completed algebraic closure of \(K^\flat\) of the
image of the absolute Galoius group of \(L\) under the tilting isomorphism.  As
\([\operatorname{Gal}(K^\mathrm{alg}/K):\operatorname{Gal}(K^\mathrm{alg}/L)] =
[L:K] \) and \(\rho\) is an isomorphism, we have \([L':K^\flat] =
[\operatorname{Gal}((K^\flat)^\mathrm{alg}/K^\flat):\operatorname{Gal}((K^\flat)^\mathrm{alg}/L')]
=
[\operatorname{Gal}((K^\flat)^\mathrm{alg}/K^\flat):\rho(\operatorname{Gal}(K^\mathrm{alg}/L))]
= [L:K] < \infty \).  Since \(K^\flat\) is perfectoid of positive
characteristic, that is, perfect and complete, so is the finite extension
\(L'\), and thus also \( (L')_\xi\).  Using the isomorphism \(\rho^{-1}\) coming
from untilting with \(\xi\), we see that \( (L')_\xi\) is the fixed field of
\(\rho^{-1} (\operatorname{Gal}((K^\flat)^\mathrm{alg}/L')) =
\operatorname{Gal}(K^\mathrm{alg}/L)\).  Hence, \(L = (L')_\xi\) is perfectoid.
\end{proof}

% The Fontaine\hyp{}Wintenberger theorem 
% follows from 
% Corollary~\ref{cor:Galois-to-Galois}
% from the presentation of the absolute
% Galois group of a field as the 
% projective limit of the Galois groups
% of its finite Galois extensions.

% \begin{cor}
% \label{cor:FW}  
% For \( K \models \PERF_{|p|=\alpha}\) 
% with \(\alpha > 0\), tilting induces
% an isomorphism of absolute Galois groups
% \[\operatorname{Gal}(K^\mathrm{alg}/K) \cong \operatorname{Gal}((K^\flat)^\mathrm{alg}/K^\flat). \]  
% Likewise, for \( K \models \PERF_{|p|=0} \), untilting induces an 
% isomorphism of absolute Galois 
% groups 
% \[\operatorname{Gal}(K^\mathrm{alg}/K) \cong \operatorname{Gal}((K_\xi)^\mathrm{alg}/K_\xi).\]
% \end{cor}

\subsection{Approximation lemma}
\label{sec:approximation-lemma}

A key step in the proof of the 
weight monodromy conjecture 
for complete intersections 
in~\cite{Scholze} is an 
approximation lemma 
whereby the tilt of
a hypersurface over a 
perfectoid field of mixed
characteristic is approximated
by hypersurfaces over the 
tilt in the 
sense that for 
any $\epsilon > 0$ it
is possible to find 
a hypersurface contained 
in the $\epsilon$-tubular 
neighborhood of the tilt 
of the original hypersurface.
Our original motivation
in pursuing this project was to 
extend the approximation lemma 
to all algebraic varieties, 
though we were not able
to achieve that aim.  Instead,
we have 
a weaker approximation theorem 
whereby tilts of (quantifier\hyp{}free)
\(\LD\)-definable sets 
may be approximated
by (quantifier\hyp{}free) \(\Ldiv\)-definable 
sets.

\begin{prop}
\label{prop:approximation-predicate}
Let \(M\models \PERF_{|p|=\alpha} \) be a perfectoid field of mixed
characteristic, let \( y \) be a tuple of variables, and let \(X \subseteq
\Omega(M)^y\) be an \(\LD(M)\)-definable subset. Define \(X^\flat\) to be \(\{
x^\flat \in (\Val(M)^\flat)^y : x \in X \} \). Then \(X^\flat\) is
\(\LD(M^\flat)\)-definable.

Moreover, for every \(\gamma \in v(\Val(M))\setminus\{\infty\}\), the set
\(X^\flat_\gamma = \{x\in\Val(M^\flat)^y\mid v(x-X^\flat)\geq \gamma\}\) is
\(\Ldiv(M^\flat)\)-definable.
\end{prop}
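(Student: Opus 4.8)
The plan is to prove the two assertions separately: the first by pushing \(X\) forward along the tilting interpretation, and the second by showing that the \(\gamma\)\hyp{}thickening only remembers \(X^\flat\) up to the scale \(\gamma\), at which scale the continuous valued\hyp{}field structure collapses onto the discrete divisibility structure. First I would name parameters \(\xi = [\varpi]-pb\) as furnished by \cref{fact theta}, so that \(M\) becomes a model of \(\PERF_{|p|=\alpha,\xi}\); then \cref{bi-int tilt} makes \(\id_\Omega\colon \Omega(M)\to \Val(M)^\flat\) one half of a quantifier\hyp{}free bi\hyp{}interpretation whose domain \(\Omega(M)\) is exactly the \(\LD(M)\)\hyp{}definable set in which \(X\) lives. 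Since one half of a bi\hyp{}interpretation carries definable sets to definable sets, \(X^\flat = \id_\Omega(X)\) is \(\LD(M^\flat)\)\hyp{}definable: this is \cref{cor:image-definable} applied to the pair \((\id_\Omega,\res_\xi)\) of \cref{bi-int tilt}, except that \(\id_\Omega\) is not literally an isometry; but by \cref{equiv norm tilt} the metric on \(\Omega(M)\) and the pullback of the metric of \(\Val(M)^\flat\) are equivalent, which is exactly the hypothesis of \cref{rk:interpretable-set}, so the conclusion persists.

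For the second statement, fix \(c\in\Val(M^\flat)\) with \(v^\flat(c)=\gamma\); such a \(c\) exists since \(\gamma\in v(\Val(M))=v^\flat(\Val(M^\flat))\). I read \(v(x-X^\flat)\ge\gamma\) as: there is a point \(z\) in the closure of \(X^\flat\) with \(|x_i-z_i|^\flat\le|c|^\flat\) for every coordinate \(i\). Quantifying over the definable set \(X^\flat\) — legitimate by \cite[Theorem~9.17]{continuous-logic-book} — shows that \(x\mapsto \inf_{z\in X^\flat}\max_i\bigl(|x_i-z_i|^\flat\dotminus|c|^\flat\bigr)\) is an \(\LD(M^\flat)\)\hyp{}definable predicate with zero set \(X^\flat_\gamma\); and because the norm is ultrametric, the distance from any point outside \(X^\flat_\gamma\) to \(X^\flat_\gamma\) coincides with its distance to \(X^\flat\), so \(X^\flat_\gamma\) is an \(\LD(M^\flat)\)\hyp{}definable \emph{set}. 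The crucial structural observation is that \(X^\flat_\gamma\) is a union of cosets of \((c)^y\), so it is the preimage, under the coordinatewise reduction modulo \((c)\), of a subset \(\bar X\subseteq B^y\), where \(B:=\Val(M^\flat)/(c)\) is a truncated valuation ring and \(\bar X\) is definable in \(B\) for the structure induced from \(M^\flat\).

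It remains to see that this induced structure on \(B\) is no finer than its \(\Ldiv\)\hyp{}structure. Since \(M^\flat\) is perfect, the \(p\)\hyp{}power roots \(c^{1/p^n}\) lie in \(\Val(M^\flat)\) and, having valuation \(\gamma/p^n<\gamma\), remain nonzero in \(B\); thus \(c^{1/p}\) is a pseudo\hyp{}uniformizer of \(B\) admitting all \(p\)\hyp{}power roots. By the lemma computing the norm of such a truncated valuation ring — the norm being the uniform limit of the quantifier\hyp{}free \(\Ldiv\)\hyp{}definable step predicates \(t_n\) — the metric of \(B\) is a uniform limit of quantifier\hyp{}free \(\Ldiv(M^\flat)\)\hyp{}definable predicates. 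As the ring operations of \(B\) are already basic in \(\Ldiv\), every \(\LD(M^\flat)\)\hyp{}definable predicate on a power of \(B\), in particular the distance to \(\bar X\), is \(\Ldiv(M^\flat)\)\hyp{}definable; and since \(B\) is uniformly discrete this forces \(\bar X\) to be first\hyp{}order \(\Ldiv(M^\flat)\)\hyp{}definable, whence so is its preimage \(X^\flat_\gamma\).

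I expect the last step to be the main obstacle: one must argue that on the uniformly discrete imaginary \(B\) the definable predicate cutting out \(\bar X\) can be replaced by a finite Boolean combination of \(\Ldiv\)\hyp{}formulas, and it is precisely here that the perfectness of \(M^\flat\) (providing the \(p\)\hyp{}power roots) and the norm\hyp{}definability lemma for truncated valuation rings are indispensable. Should that proof prove awkward, I would keep in reserve the alternative of passing to \(\widehat{\alg{(M^\flat)}}\models\ACMVF_p\), using \(\LD\)\hyp{}quantifier elimination (\cref{ACMVF EQ}) to make \(X^\flat_\gamma\) quantifier\hyp{}free \(\LD\)\hyp{}definable there — whence, once the \(\gamma\)\hyp{}coarsening turns every \(\rD\)\hyp{}atom into a divisibility condition, quantifier\hyp{}free \(\Ldiv\)\hyp{}definable — and then descending the defining formula to \(M^\flat\) using that \(X^\flat_\gamma\) is \(\operatorname{Aut}(\widehat{\alg{(M^\flat)}}/M^\flat)\)\hyp{}invariant together with elimination of imaginaries in \(\ACVF\).
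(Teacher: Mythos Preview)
Your proposal is correct and follows essentially the same approach as the paper. For the first assertion you invoke the bi\hyp{}interpretation together with the equivalence of norms on \(\Omega\) (\cref{equiv norm tilt}) to apply \cref{cor:image-definable} via \cref{rk:interpretable-set}, exactly as the paper does; for the second assertion the paper simply asserts in one sentence that \(X^\flat_\gamma\) is the preimage of a definable set in \(\Val/\gamma\Val\) whose induced structure is the \(\Ldiv\)\hyp{}structure, and your argument (pseudo\hyp{}uniformizer with \(p\)\hyp{}power roots, the norm\hyp{}definability lemma for truncated valuation rings, uniform discreteness forcing first\hyp{}order definability) is precisely the unpacking of that sentence --- so the ``main obstacle'' you anticipate is not really one, and your reserve alternative via \(\ACMVF\) is unnecessary here (though the paper does use that route for the quantifier\hyp{}free refinement in the next proposition).
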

\begin{proof}
The tilting interpretation is part of a bi\hyp{}interpretation. By \cref{equiv
norm tilt}, the pullback of the norm on \(\Val^\flat\) is equivalent to the norm
on \(\Omega\), so, by Corollary~\ref{cor:image-definable}, the set \(X^\flat\)
is definable in \(K^\flat\).

The second part of the statement follows form the fact that \(X^\flat_\gamma\)
is the preimage of a definable set in \(\Val/\gamma\Val\) whose structure in
the \(\Ldiv(M)\)-induced structure.
\end{proof}

We can upgrade the statement of Proposition~\ref{prop:approximation-predicate}
so that if \(X\) is itself quantifier\hyp{}free definable, then the
first\hyp{}order formulas used in the approximation to \(X^\flat\) may be taken
to be quantifier\hyp{}free.

\begin{prop}
\label{prop:qf-approximation-predicate}
With the hypotheses and notation of
Proposition~\ref{prop:approximation-predicate}, if \(X\) is quantifier\hyp{}free
definable, then \(X^\flat\) is quantifier free \(\LD(M^\flat)\)-definable and
\(X^\flat_\gamma\) is \(\Ldiv(M^\flat)\)-definable.
\end{prop}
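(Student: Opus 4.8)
The plan is to rerun the proof of Proposition~\ref{prop:approximation-predicate} while keeping track of quantifier complexity, which is possible because every ingredient involved is quantifier\hyp{}free. First I would recall that the tilting map $\id_\Omega\colon\Omega(M)\to\Val(M)^\flat$ is a quantifier\hyp{}free interpretation forming half of the quantifier\hyp{}free bi\hyp{}interpretation of Theorem~\ref{bi-int tilt}, and that the two homotopies witnessing this bi\hyp{}interpretation are induced by uniform limits of $\LD$\hyp{}terms: the map $\theta$ on one side (Fact~\ref{fact theta}.(4)) and the map $x\mapsto\lim_i x_i^{p^i}$ of Proposition~\ref{untilt facts}.(6) on the other, which is a uniform limit of terms since $x\mapsto x^p$ is a term in characteristic~$p$. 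Re-reading the proofs of Proposition~\ref{prop:bi-interp-push} and Corollary~\ref{cor:image-definable} with these term\hyp{}witnessed isomorphisms in place of arbitrary definable ones, and using both that quantifier\hyp{}free interpretations preserve quantifier complexity and that substituting terms into a quantifier\hyp{}free formula keeps it quantifier\hyp{}free, one obtains the following: for every quantifier\hyp{}free $\LD(M)$\hyp{}definable predicate $P$ on $\Omega(M)^y$, the predicate $P\circ\id_\Omega^{-1}$ on $(\Val(M)^\flat)^y$ is quantifier\hyp{}free $\LD(M^\flat)$\hyp{}definable.

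To deduce the first assertion it then suffices to check that the pullback along $\id_\Omega$ of the distance\hyp{}to\hyp{}$X^\flat$ predicate of $M^\flat$ --- that is, the function $z\mapsto d^\flat(z,X)$ where $d^\flat$ denotes the pullback of the structural metric of $\Val(M)^\flat$ --- is quantifier\hyp{}free $\LD(M)$\hyp{}definable on $\Omega(M)^y$. The two\hyp{}variable predicate $d^\flat(z,w)=\sup_k 2^{-k}\lim_j|z_{k,j}-w_{k,j}|^{p^j}$ is itself quantifier\hyp{}free $\Lrg$\hyp{}definable (a uniform limit of the quantifier\hyp{}free predicates obtained by truncating the inner limit, the convergence rate being controlled by powers of $\alpha=|p|$). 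Combining this with the hypothesis that $d(\cdot,X)$ is quantifier\hyp{}free $\LD(M)$\hyp{}definable and with the explicit comparison (by powers of $\alpha$) of the two metrics on $\Omega(M)$ recorded in Remark~\ref{equiv norm tilt}, one builds a uniformly convergent sequence of quantifier\hyp{}free predicates with limit $d^\flat(\cdot,X)$, exactly in the manner of the lemma showing that $\Omega(M)$ is quantifier\hyp{}free definable. Applying the translation of the previous paragraph to $P:=d^\flat(\cdot,X)$ then shows that $d_{\Val(M)^\flat}(\cdot,X^\flat)=P\circ\id_\Omega^{-1}$ is quantifier\hyp{}free $\LD(M^\flat)$\hyp{}definable, i.e.\ that $X^\flat$ is quantifier\hyp{}free $\LD(M^\flat)$\hyp{}definable.

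For the second assertion, write $X^\flat_\gamma$ as the preimage, under the coordinatewise reduction $\res_\varpi\colon\Val(M^\flat)^y\to(\Val(M^\flat)/\varpi\Val(M^\flat))^y$ for a pseudo\hyp{}uniformizer $\varpi$ of $v^\flat$\hyp{}value $\gamma$, of the corresponding subset of the truncation. As in Theorem~\ref{tilt interp}.(3), this reduction is a quantifier\hyp{}free interpretation of the induced $\Ldiv$\hyp{}structure on $\Val(M^\flat)/\varpi\Val(M^\flat)$ inside $M^\flat$; since $X$, hence $X^\flat$, is quantifier\hyp{}free definable, the relevant subset of the (discrete) truncation is quantifier\hyp{}free definable there, and pulling back along a quantifier\hyp{}free interpretation preserves quantifier\hyp{}freeness, so $X^\flat_\gamma$ is quantifier\hyp{}free $\Ldiv(M^\flat)$\hyp{}definable.

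The hard part is the middle step. Because $\id_\Omega$ is an isometry only up to the equivalence of Remark~\ref{equiv norm tilt} and not on the nose, one cannot simply quote Corollary~\ref{cor:image-definable} --- whose proof passes through the interpretable\hyp{}set construction and yields only definability, not quantifier\hyp{}free definability --- and instead one must produce the quantifier\hyp{}free approximations to the distance predicate $d^\flat(\cdot,X)$ explicitly, reconciling the two (equivalent but unequal) metrics on $\Omega(M)$ at the quantifier\hyp{}free level using the power\hyp{}of\hyp{}$\alpha$ control.
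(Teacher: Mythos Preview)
Your approach and the paper's are genuinely different, and the paper's is both shorter and avoids the difficulty you flag as ``the hard part''.  The paper does not attempt to push quantifier\hyp{}free complexity through the bi\hyp{}interpretation at all.  Instead it passes to the completed algebraic closure: since \(X\) is quantifier\hyp{}free definable, \(X(M)=X(\widehat{M^{\mathrm{alg}}})\cap M^y\); by Proposition~\ref{prop:macvf-tilts} the tilt \((\widehat{M^{\mathrm{alg}}})^\flat\) is algebraically closed; now apply the (not quantifier\hyp{}free) Proposition~\ref{prop:approximation-predicate} over \(\widehat{M^{\mathrm{alg}}}\) to get that \(X^\flat\) and \(X^\flat_\gamma\) are definable there, and then invoke quantifier elimination in \(\ACMVF\) (Proposition~\ref{ACMVF EQ}) and in \(\ACVF\) to conclude they are quantifier\hyp{}free \(\LD\)- and \(\Ldiv\)-definable, respectively; restricting back to \(M^\flat\) preserves the quantifier\hyp{}free formulas.

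Your route has a real gap at exactly the place you identify.  Knowing that \(d(\,\cdot\,,X)\) is quantifier\hyp{}free and that \(d\) and \(d^\flat\) are uniformly equivalent gives you only sandwich inequalities \(f(d(z,X))\le d^\flat(z,X)\le g(d(z,X))\) for monotone \(f,g\) vanishing at \(0\); it does not let you reconstruct \(d^\flat(z,X)=\inf_{w\in X}d^\flat(z,w)\) without the infimum over \(X\), and that infimum is a genuine quantifier.  The metric equivalence of Remark~\ref{equiv norm tilt} controls which \emph{balls} coincide, but the distance to \(X\) in one metric is not a continuous function of the distance in the other unless the two metrics are related by a single monotone reparametrisation, which they are not here.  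The analogy with the proof that \(\Omega(M)\) is quantifier\hyp{}free definable does not carry over: there the approximating predicates \(d_n\) quantified only over a single element \(y\), not over an arbitrary definable set.  So as written, your middle step does not go through; the paper's device of absorbing the quantifier via QE in the algebraically closed hull is what makes the argument work.
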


Given \cref{rem:uniform-limits}, the distance to \(X^\flat\) is therefore a
uniform limit of linear combinations of \(\Ldiv(M^\flat)\)-formulas.

\begin{proof}
Since \(X\) is quantifier\hyp{}free definable, we have that \(X(K) =
X(\widehat{K^\mathrm{alg}}) \cap K^y\). By Proposition~\ref{prop:macvf-tilts},
\((\widehat{K^\mathrm{alg}})^\flat\) is also algebraically closed. The proposition
now follows from quantifier elimination in, respectively, \(\ACMVF\) and
\(\ACVF\).
\end{proof}

\begin{rem}
If \(X \subseteq \Val(M)^y\) is a zero set of a (quantifier free)
\(\LD(M)\)-definable and for some \(\gamma\in v(\Val(M))\setminus\{\infty\}\),
we have \(X = X_\gamma = \{x\in \Val(M)^y \mid v(y - X) \geq \gamma\}\), then
for some \(\delta \in v(\Val(M))\setminus\{\infty\}\), we have \(X^\flat =
X^\flat_\delta\). So, by the above, \(X^\flat\) is itself (quantifier free)
\(\Ldiv(M^\flat)\)-definable.
\end{rem}

Let us now consider an affine variety \(V\) over \(M\models\ACMVF\) --- to be
precise, we consider an irreducible affine scheme of finite type over
\(\Val(M)\) with an \(\Val(M)\)-point.

\begin{lem}
The set \(V(\Val(M))\) is quantifier-free \(\LD(A)\)-definable.
\end{lem}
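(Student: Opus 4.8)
The plan is to reduce, by a Noetherianity argument carried out over a field rather than over $\Val(M)$ itself, to the case of a variety cut out by finitely many equations, and then to recognise the resulting Zariski-closed set as a quantifier-free definable set by means of quantifier elimination for $\ACMVF$ in $\LD$ (\cref{ACMVF EQ}).

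First I would fix a closed immersion of $V$ into affine space and write $V = \Spec(\Val(M)[x_1,\dots,x_n]/I)$, where $I = I_0 \cdot \Val(M)[x]$ for an ideal $I_0 \subseteq A[x]$ (this is what it means for $V$ to be defined over $A$); thus $V(\Val(M)) = \{a \in \Val(M)^n : f(a) = 0 \text{ for all } f \in I_0\}$. Although $\Val(M)$ is very far from Noetherian, the fraction field $\operatorname{Frac}(A)$ is a field, so $I_0 \cdot \operatorname{Frac}(A)[x]$ is finitely generated, and after clearing denominators I may choose generators $g_1,\dots,g_r$ lying in $I_0$ itself. For a tuple $a$ with coordinates in $\Val(M)$, hence in the field $\operatorname{Frac}(\Val(M)) \supseteq \operatorname{Frac}(A)$, the condition ``$f(a) = 0$ for all $f \in I_0$'' is then equivalent to ``$g_1(a) = \dots = g_r(a) = 0$'' --- one implication because each $g_i \in I_0$, the other because each $f \in I_0$ is an $\operatorname{Frac}(A)[x]$-linear combination of the $g_i$. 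Hence
\[ V(\Val(M)) = \{\, a \in \Val(M)^n : g_1(a) = \dots = g_r(a) = 0 \,\}, \]
a Zariski-closed set defined by finitely many polynomials with coefficients in $A$.

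It remains to show that $Z := \{a \in \Val(M)^n : g_1(a) = \dots = g_r(a) = 0\}$ is a quantifier-free $\LD(A)$-definable set, that is, that the distance predicate $d(x,Z)$ is quantifier-free $\LD(A)$-definable. By quantifier elimination for $\ACMVF$ (\cref{ACMVF EQ}), ``$\LD(A)$-definable'' already forces ``quantifier-free $\LD(A)$-definable'', so it suffices to prove that $Z$ is a definable set. Since $Z$ is the zero set of the quantifier-free predicate $\varphi(x) := \max_i |g_i(x)|$, the standard criterion for a zero set of a formula to be a definable set \cite{continuous-logic-book}, combined with compactness, reduces this to a uniform approximation statement: for every $\epsilon > 0$ there is $\delta > 0$ such that in every model of $\ACMVF$ extending $M$, any point $a$ with $\varphi(a) < \delta$ lies within distance $\epsilon$ of a point of $Z$. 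This is a uniform approximation (Lojasiewicz-type) inequality for an algebraically closed valued field; I would derive it from quantifier elimination in $\ACVF$ for the underlying discrete valued field, which presents $Z$ as a finite Boolean combination of ball conditions from whose description the distance to $Z$ can be computed explicitly (cf.\ the theory of definable sets in $\ACMVF$ in \cite{BY-MVF}), or, more concretely, by a resultant computation reducing to the one-variable case where the bound follows from Henselianity of $\Val(M)$.

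The hard part is precisely this last step: passing from $Z$, which is trivially cut out by a conjunction of quantifier-free conditions, to the statement that its metric distance predicate is quantifier-free definable --- equivalently, establishing the uniform ``almost a solution implies near an exact solution'' inequality for an algebraically closed valued field in several variables. By contrast, the reduction to finitely many equations is routine commutative algebra, and the passage from definable to quantifier-free definable is an immediate application of \cref{ACMVF EQ}.
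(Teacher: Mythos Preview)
Your outline is sound and matches the paper's strategy: reduce to finitely many polynomial equations over \(A\), observe that by \cref{ACMVF EQ} definability implies quantifier-free definability, and then reduce definability of the zero set \(Z\) to a \L{}ojasiewicz-type inequality relating \(\max_i |g_i(x)|\) to \(d(x,Z)\). You also correctly identify this last inequality as the only substantive step.

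The gap is that you do not actually prove that step. Neither of your two suggested routes is fleshed out, and neither is the paper's method. ``QE in \(\ACVF\) presents \(Z\) as a Boolean combination of balls'' is true, but it does not by itself produce a uniform bound of the shape \(d(x,Z) \leq \text{(modulus)}(\varphi(x))\) valid in all models; extracting such a bound from the cell description still requires an argument. The ``resultant reduction to one variable plus Henselianity'' idea is plausible for hypersurfaces but is not a routine computation in several variables and you give no indication of how it would go. The paper instead proves the inequality \(\alpha(x) \leq \gamma + n\, d_V(x)\) (in valuative notation) by a compactness argument: if it failed, an ultrapower would contain a point \(x\) with \(\alpha(x)\) above the convex subgroup \(\Delta\) generated by \(v(M)\) and \(d_V(x)\); passing to the coarsening by \(\Delta\), the residue of \(x\) lies on \(V\), and a lifting result for valued fields (\cite[Theorem~3.2.4]{Hal-GenSt}) produces an integral point \(a\in V\) with \(v(x-a) > \Delta\), contradicting the definition of \(\Delta\). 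This is the missing idea in your proposal.

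A minor point: your Noetherianity reduction over \(\operatorname{Frac}(A)\) is fine but unnecessary here --- the paper simply takes \(V\) to be cut out by finitely many \(f_i \in A[x]\), which is part of the setup (finite type over \(\Val(M)\)).
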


\begin{proof}
Let \((f_i)_{i\leq n}\in A[x]\) be equations defining \(V\). Let \(\alpha(x) =
\min_i(v(f_i(x)))\) and let \(d_V(x)\) be the \(\Ldiv(M)\)-definable function
\(x \mapsto \sup_{y\in V(\Val(M))} v(x-y)\) --- this map is well defined as
\(v(M)\) is definably complete.  We show that, for some \(n \in \Zz_{>0}\) and
\(\gamma\in v(\Val(M))\), we have \(\alpha(x) \leq \gamma + n d_V(x)\). Our argument 
is extracted from the 
proof of~\cite[Lemma 6.6]{Hr-ML}.  Our
lemma follows by applying to this result to the rank one coarsening of \(M\) and
\cite[Proposition~2.19]{continuous-logic-book}.

If the inequality fails, then in some (discrete) ultrapower \(M^\star\) of
\((M,v)\), we find \(x\) with coordinates in \(\Val(M^\star)\) such that, if
\(\Delta \leq v(M^\star)\) denotes the convex group generated by \(v(M)\) and
\(d_V(x)\), we have \(\alpha(x) > \Delta\). If \(w\) is the coarsening
associated to \(\Delta\) and \(\res_w : \Val\to k_w\) denotes the reduction
modulo the maximal ideal of \(w\), then \(\res_w(x) \in V(k_w)\). Then there
exists \(a \in V(\Val(M^\star))\) such that \(\res_w(a) = \res_w(x)\) (see
\cite[Theorem~3.2.4]{Hal-GenSt}). But then \(v(x-a) > \Delta\) and hence
\(d_X(x) > \Delta\), contradicting the definition of \(\Delta\).
\end{proof}

Considering the irreducible components of the set defined by \(x^{p^i} \in V\),
we see that this set is also quantifier free \(\LD(M)\)-definable. Taking a
uniform limit, we see that \(\{x\in \Omega\mid x_0 \in V\}\) is quantifier free
\(\LD(M)\)-definable. By \cref{prop:qf-approximation-predicate}, the set
\(V^\flat = \{x\in\Val^\flat \mid x^\sharp \in V\}\) is quantifier free
\(\LD(M^\flat)\)-definable. If we choose some \(\gamma\in
v(\Val(M))\setminus\{\infty\}\), then \(V^\flat_\gamma\) is
\(\Ldiv(M^\flat)\)-definable.

Ideally we would like \(V^\flat_\gamma\) to be the \(\gamma\)-neighborhood of
some variety \(W_\gamma\) over \(M^\flat\) of dimension equal to that of \( V\).
In~\cite{Scholze}, this is achieved through a “syntactic trick”: one may trace
through the translation given by tilting to see that for a hypersurface \(X\)
defined by \(f = 0\) where \( f \) is a polynomial, \( X^\flat \) is defined by
\(g = 0\) for some \(g\) such that \(g^\sharp\) is a good approximation of
\(f\). For a complete intersection, the approximating variety \( W \) is found
by approximating the tilts of a minimal set of defining equations.  The formulas
we obtain need not have this form.

\subsection{Type spaces}
\label{sec:type-spaces}

As we have seen 
with Propositions~\ref{prop:interp-type} and~\ref{prop:homeo-type-spaces},
interpretations
induce continuous maps between 
type spaces and for
an isometric bi\hyp{}interpretation
this induced map is
a homeomorphism.
Specializing to the case of the 
tilt/untilt correspondence, we 
may understand the
type spaces themselves as adic 
spaces, albeit considered with a 
finer
topology than usual, so
that corresponding 
homeomorphisms between 
type spaces may be seen 
as a kind of equivalence of
adic spaces over a perfectoid
field and its tilt.

Let \(M \models \PERF_{|p|=\alpha}\) be 
perfectoid and let \(A\leq \Val(M)\) be a perfectoid
substructure. For every type-\(A\)-definable set \(X \subseteq\Omega^n\), 
let \(X^\flat\subseteq (\Val^\flat)^n\) denote the corresponding type-\(A^\flat\)-definable set.

\begin{thm}
\label{homeo types tilt}
The spaces \(\TP_X(A)\) are \(\TP_{X^\flat}(A^\flat)\) are homeomorphic. Moreover, the homeomorphism is functorial and induces an equivalence of categories:
\[\left\{
\begin{array}{c}
\text{type-\(A\)-definable subsets}\\
\text{of (cartesian powers of) } \Omega(M)
\end{array}\right\}
\leftrightarrow \left\{
\begin{array}{c}
\text{type-\(A^\flat\)-definable subsets}\\
\text{of (cartesian powers of) } \Val(M^\flat)
\end{array}\right\}.\]
\end{thm}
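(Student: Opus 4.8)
The plan is to read off the statement from the quantifier\hyp{}free bi\hyp{}interpretation of \cref{bi-int tilt} together with the type\hyp{}space formalism of \cref{sec:types-continuous}, essentially specializing Propositions~\ref{prop:interp-type} and~\ref{prop:homeo-type-spaces} to the tilting interpretation while carefully tracking parameters.

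The first point to settle is that tilting identifies the right parameter sets. The interpretation $\id_\Omega\colon\Omega(M)\to\Val(M^\flat)$ is $\emptyset$\hyp{}definable and quantifier\hyp{}free, and --- once the mixed characteristic side is equipped with the ideal $\ker\theta\subseteq\W(\Val(M^\flat))$, equivalently once a suitable $\xi$ is named with parameters taken in $\W(\Val(A^\flat))$ (possible because $A$ is perfectoid, by \cref{fact theta}.(4)) --- it is half of the bi\hyp{}interpretation of \cref{bi-int tilt}; see also \cref{canon bi-int}. Since that bi\hyp{}interpretation is quantifier\hyp{}free it restricts to the perfectoid substructure $A$ by \cref{prop:qf-interp}, and the restriction identifies $A$ on the mixed side with $A^\flat$ on the tilt side: each $a\in A$ equals $\theta(w)$ for some $w$ built from elements of $\Val(A^\flat)$, so $A\subseteq\operatorname{dcl}(A^\flat)$ once $\Val(M)$ is viewed as interpreted in $\Val(M^\flat)$; conversely every element of $A^\flat\cong\Omega(A)$ is a tuple of constants from $A$, so $A^\flat\subseteq\operatorname{dcl}(A)$ once $\Val(M^\flat)$ is viewed as interpreted in $M$. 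Thus adjoining the constants of $A$ to $M$ and those of $A^\flat$ to $\Val(M^\flat)$ yields the same induced structure through the bi\hyp{}interpretation, so $\TP_{\Omega(M)}(A)$ and $\TP_{\Val(M^\flat)}(A^\flat)$ refer to the same data.

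Next I would apply the type\hyp{}space machinery. As $\id_\Omega$ is a bijection, the pullback pseudometric $d^{\id_\Omega}$ on $\Omega(M)$ is in fact a metric, and by \cref{equiv norm tilt} it is equivalent to the structural metric on $\Omega(M)$; hence the interpretable set $\mathfrak N$ of \cref{prop:homeo-type-spaces} has underlying set $\Omega(M)$, carries an equivalent metric and the same definable predicates, and $\overline{\id_\Omega}$ is an isometry onto $\Val(M^\flat)$. \Cref{prop:homeo-type-spaces} then gives a homeomorphism $\TP_{\Omega(M)}(A)\cong\TP_{\Val(M^\flat)}(A^\flat)$, and, pulling a definable subset $X\subseteq\Omega(M)^n$ and its structural pseudometric back along $\overline{\id_\Omega}$ as in the discussion following that proposition, this restricts to a homeomorphism $\TP_X(A)\cong\TP_{X^\flat}(A^\flat)$, where $X^\flat=\id_\Omega(X)$ is type-$A^\flat$-definable by \cref{cor:image-definable} (via \cref{equiv norm tilt}; cf.\ \cref{rk:interpretable-set}) applied to each condition of the partial type cutting out $X$.

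Finally, for functoriality and the equivalence of categories, take the morphisms between type\hyp{}definable sets to be the type\hyp{}definable maps. Transporting the graph of a definable $f\colon X\to Y$ across the bijection $\id_\Omega$ produces $f^\flat\colon X^\flat\to Y^\flat$, with definable graph again by \cref{cor:image-definable}, and functoriality of $X\mapsto X^\flat$ is immediate since $\id_\Omega$ is a bijection of underlying sets. That $(\cdot)^\flat$ is an equivalence comes from the other half of the bi\hyp{}interpretation: by \cref{bi-int tilt} untilting is a quasi\hyp{}inverse, the two composites being homotopic to the identity interpretations, so via the definable isomorphisms $\theta$ and $x\mapsto\lim_i x_i^{p^i}$ witnessing those homotopies the round trips are naturally isomorphic to the identity functors, and \cref{prop:interp-type} applied to the homotopies shows these natural isomorphisms become the identities on type spaces. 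The step I expect to be the genuine obstacle is not conceptual but the parameter bookkeeping of the second paragraph: one must check that the ideal $\ker\theta$ (equivalently a choice of $\xi$) can always be taken with parameters in $\W(\Val(A^\flat))$, so that ``type-$A^\flat$-definable'' is the exact match for ``type-$A$-definable'' and the induced structures on the two sides agree on the nose rather than only after adjoining this ideal.
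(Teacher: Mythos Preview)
Your proposal is correct and follows the same route as the paper: the paper's proof is the single sentence ``This is a special case of Proposition~\ref{prop:homeo-type-spaces}.'' You have unpacked this invocation, and in doing so you address details the paper leaves implicit --- the parameter bookkeeping (that naming \(A\) on the mixed side matches naming \(A^\flat\) on the tilt side, which hinges on the bi\hyp{}interpretation being quantifier\hyp{}free and \(A\) being perfectoid so that \(\xi\) lives over \(A^\flat\)), the appeal to \cref{equiv norm tilt} to justify that the interpretable set \(\mathfrak{N}\) coincides with \(\Omega(M)\) up to equivalent metric, and the functoriality / equivalence of categories claims, which the paper asserts but does not argue. Your flagged concern about parameters is exactly the step the paper sweeps under the rug; your resolution via \cref{fact theta}.(4) and \cref{prop:qf-interp} is the right one.
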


\begin{proof}
This is a special case of Proposition~\ref{prop:homeo-type-spaces}.
\end{proof}

Let us now assume \(M\models \ACMVF\). Then we are considering quantifier-free
type spaces that are closely related to adic spaces.

In what follows, we write \(T\) for a (potentially infinite countable) tuple of
variables and \(A \langle T \rangle\) for the Tate algebra of convergent power
series over \(A\) in those variables. We allow valuations to have kernels.
Perhaps, the reader would prefer to call these ``pseudovaluations''.

\begin{fact}
\label{lem:equivalent-continuous-valuation}
Let $w$ be a valuation on \(A\langle T\rangle\) extending $v$. The following are equivalent:
\begin{enumerate}
    \item for every $\gamma\in w(A\langle T\rangle)$, the set $\{f\in A\langle
    T\rangle\mid w(f) \geq \gamma\}$ contains $aA\langle T\rangle$ for some
    $a\in A$.
    \item for every $f\in A\langle T\rangle$, if \(w f < \infty\), then there
    exists \(\gamma\in v(A)\) such that \(w f \leq \gamma\).
\end{enumerate}
\end{fact}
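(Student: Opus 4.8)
I would prove the two implications separately; $(1)\Rightarrow(2)$ is immediate, and all the content sits in $(2)\Rightarrow(1)$.

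For $(1)\Rightarrow(2)$: given $f\in A\langle T\rangle$ with $w(f)<\infty$, put $\gamma:=w(f)$, which lies in $w(A\langle T\rangle)$. By $(1)$ there is $a\in A$ (necessarily $a\neq 0$, as otherwise the assertion is vacuous) with $aA\langle T\rangle\subseteq\{g\in A\langle T\rangle:w(g)\geq\gamma\}$. Since $a=a\cdot 1$ belongs to this set, $v(a)=w(a)\geq\gamma=w(f)$, and $v(a)\in v(A)$; this is $(2)$.

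For $(2)\Rightarrow(1)$, the crucial point---and the step I expect to be the real obstacle---is that $w$ is bounded below on $A\langle T\rangle$ in the following precise sense: there is $b\in A\setminus\{0\}$ with $w(g)\geq -v(b)$ for every $g\in A\langle T\rangle$. The mechanism I would use is that $A\langle T\rangle$ is topologically generated over $A$ by the (essentially power\hyp{}bounded) variables $T_i$: fixing $c\in A$ with $v(c)>0$, the geometric series $g_c:=\sum_{n\geq 0}(cT_i)^n$ converges in $A\langle T\rangle$---its coefficients $c^n$ have $v(c^n)\to\infty$---and inverts $1-cT_i$, and comparing valuations in the relation $g_c-1=cT_i\,g_c$ controls $w(T_i)$ from below in terms of $v(c)$; this bounds $w$ below on monomials, hence on $A[T]$, and an approximation argument---passing from polynomials to the completion $A\langle T\rangle$, using that every element of $A\langle T\rangle$ has its coefficients in $A$---extends the bound to all of $A\langle T\rangle$. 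Getting this last passage right is where I expect the work to be.

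Granting such a $b$, the conclusion is short. Let $\gamma\in w(A\langle T\rangle)$, say $\gamma=w(f_0)$ with $w(f_0)<\infty$ (the value set $w(A\langle T\rangle)$ being understood, as is usual for valuations with kernel, to consist of the genuine values). Then $w(bf_0)=v(b)+\gamma<\infty$, so by $(2)$---read, as it must be to be non\hyp{}vacuous, as providing a genuine value $\neq\infty$---there is $a\in A$ with $v(b)+\gamma\leq v(a)<\infty$; in particular $a\neq 0$. Now for every $g\in A\langle T\rangle$,
\[ w(ag)=v(a)+w(g)\geq\bigl(v(b)+\gamma\bigr)+\bigl(-v(b)\bigr)=\gamma, \]
so $aA\langle T\rangle\subseteq\{g\in A\langle T\rangle:w(g)\geq\gamma\}$, which is $(1)$.
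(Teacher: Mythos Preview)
Your $(1)\Rightarrow(2)$ is fine and matches the paper. The issue is $(2)\Rightarrow(1)$.

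The paper's proof is a single line: for any $f$ and any $a\in A$, one has $w(f)\leq v(a)$ if and only if $aA\langle T\rangle\subseteq\{g:w(g)\geq w(f)\}$, and this biconditional makes (1) and (2) visibly the same statement (``$v(A)$ is cofinal in the finite values of $w$''). The point you are missing is that this biconditional uses $w(h)\geq 0$ for all $h\in A\langle T\rangle$: then $w(ah)=v(a)+w(h)\geq v(a)$, so $v(a)\geq w(f)$ already gives $aA\langle T\rangle\subseteq\{g:w(g)\geq w(f)\}$. In the paper's setting this non\hyp{}negativity is implicit: $A\leq\Val(M)$ is a valuation ring, the space under consideration is identified with a fibre of $\Spa(A\langle T\rangle,A\langle T\rangle)$, and throughout the paper ``valuation on $R$'' means $R$ sits inside the valuation ring. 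In your notation, the bound you seek is simply $b=1$, and once you use it your final paragraph is exactly the paper's argument.

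Your attempt to manufacture a lower bound from the invertibility of $1-cT_i$ does not work as written. From $g_c-1=cT_i\,g_c$ one gets $w(g_c-1)=w(cT_i)+w(g_c)$; but if $w(cT_i)<0$ then $w(1-cT_i)=w(cT_i)$, hence $w(g_c)=-w(cT_i)>0$, so $w(g_c-1)=\min(w(g_c),0)=0$ and the identity reads $0=w(cT_i)+(-w(cT_i))=0$, yielding no constraint on $w(T_i)$. The ``approximation'' step you flag as the hard part is likewise problematic: a valuation not assumed continuous for the Tate topology need not interact well with passing from polynomials to convergent power series. So without the standing hypothesis $w\geq 0$ on $A\langle T\rangle$, your route has a genuine gap; with it, the detour is unnecessary.
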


\begin{proof}
Indeed for any \(f\in A\langle T\rangle\) and any \(a\in A\), we have \(w(f)
\leq v(a)\) of and only if \(aA\langle T\rangle \subseteq \{g\in A\langle
T\rangle\mid w(g) \geq \gamma\}\).
\end{proof}

A valuation satisfying the equivalent conditions of
Lemma~\ref{lem:equivalent-continuous-valuation} is said to be continuous. Let
\(\Cont_v(A\langle T\rangle)\) be the set of continuous valuations on \(A\langle
T\rangle\) extending $v$, up to equivalence. We endow \(\Cont_v(A\langle
T\rangle)\) with the topology generated by the sets of the form \(\{w f\geq w g
\neq\infty\}\) for all $f,g\in A\langle T\rangle$. If \(v\) has finite rank on
\(A\), this space is exactly the fiber of the adic space \(\Spa(A\langle
T\rangle,A\langle T\rangle)\) over the point \(v\) of \(\Spa(A,A)\).

\begin{prop}
There is a continuous bijection \[S_{\Val^{|T|}}(A) \to \Cont_v(A\langle
T\rangle).\] The topology on $S_{V}(A)$ is the constructible topology
associated to $\Cont_v(A\langle T\rangle)$.
\end{prop}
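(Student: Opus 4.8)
The plan is to realise the bijection as ``evaluation at a realisation'' and to reduce bijectivity, continuity and the identification of topologies to quantifier elimination for \(\ACMVF\) in \(\LD\) (Proposition~\ref{ACMVF EQ}) together with the classical fact that a continuous valuation survives completion of the ground ring. Given \(p \in S_{\Val^{|T|}}(A)\), realise it by a tuple \(a = (a_i)_i \in \Val(M')^{|T|}\) in an elementary extension \(M' \succeq M\). Since \(|a_i| \le 1\) and \(M'\) is complete, every \(f = \sum_\alpha c_\alpha T^\alpha \in A\langle T\rangle\) evaluates at \(a\): the partial sums of \(\sum_\alpha c_\alpha a^\alpha\) are \(|\cdot|\)-Cauchy because \(|c_\alpha| \to 0\), so \(f \mapsto f(a)\) is a ring homomorphism \(A\langle T\rangle \to \Val(M')\). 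Put \(w_p(f) := v(f(a))\), the full valuation of \(M'\). As \(|\cdot|\) is a genuine metric, \(f(a) = 0\) iff \(|f(a)| = 0\), so \(w_p\) is a valuation with prime support extending \(v\) on \(A\); it is continuous because \(|A^\times|\) is dense in \((0,1]\) (a consequence of the \(\ACMVF\) axioms): if \(w_p(f) < \infty\) then \(|f(a)| > 0\), so some \(c \in A\) has \(0 < |c| < |f(a)|\), whence \(w_p(f) < v(c) \in v(A)\).

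By Proposition~\ref{ACMVF EQ}, the quantifier\hyp{}free \(\LD\)-type of \(a\) over \(A\) is determined by the reals \(|P(a)|\) and by whether \(\rD(P(a),Q(a)) = 0\) for \(P,Q \in A[T]\), and \(\rD(P(a),Q(a)) = 0\) holds exactly when \(v(P(a)) \le v(Q(a))\) (the divisibility relation, as in the proof of Proposition~\ref{ACMVF EQ}); uniform continuity of \(\rD\) and convergence of the defining series carry this over to all of \(A\langle T\rangle\). Hence \(w_p\) depends only on \(p\), and conversely \(p\) is recovered from \(w_p\) --- the divisibility data directly, the norms via \(|f(a)| = \sup\{|c| : c \in A,\ v(c) \ge w_p(f)\}\), using density --- so \(p \mapsto w_p\) is a well\hyp{}defined injection. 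For surjectivity, given a continuous valuation \(w\) on \(A\langle T\rangle\) extending \(v\), let \(L\) be the fraction field of \(A\langle T\rangle/\operatorname{supp}(w)\) with its induced valuation; continuity yields a rank\hyp{}one coarsening \(w_0\) of \(w\) restricting to \(v_0\) on \(A\) (the quotient by the largest convex subgroup disjoint from \(v(A)_{>0}\), which is archimedean precisely because \(w\) is continuous) and guarantees that \(w\) extends to the \(w_0\)-completion of \(L\). Embedding that completion into a model \(M' \models \ACMVF\) (algebraically close, then complete) with \(A \le \Val(M')\), the image \(a\) of \(T\) lies in \(\Val(M')^{|T|}\) since \(w(T_i) \ge 0\), and \(w_{\operatorname{tp}_{\mathrm{qf}}(a/A)} = w\).

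It remains to match the topologies. First, \(p \mapsto w_p\) is continuous: the preimage of a rational subset \(\{w(f) \ge w(g) \ne \infty\}\) is \(\{p : \rD(g(x),f(x))^p = 0,\ |g(x)|^p > 0\}\), open because near a point where \(|g(x)|^p = \eta > 0\), the conditions \(|g(x)| > \eta/2\) and \(\rD(g(x),f(x)) < \eta/2\) force \(\rD(g(x),f(x)) = 0\) (otherwise \(\rD(g(x),f(x)) = |f(x)|\) with \(v(g(a)) > v(f(a))\), hence \(|g(x)| \le |f(x)| < \eta/2\), absurd). Second, since the ideal \((f,g)\) defining a quasi\hyp{}compact rational subset is open it contains a power of a (constant) pseudo\hyp{}uniformizer of \(A\), so no type on \(S_{\Val^{|T|}}(A)\) can satisfy both \(f(x) = 0\) and \(g(x) = 0\); consequently the preimage of the complement of such a rational subset collapses to \(\{\rD(g(x),f(x)) > 0\}\), again open. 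Third, by quantifier elimination every quantifier\hyp{}free \(\LD(A)\)-formula \(\varphi\) is a continuous function of finitely many norms \(|P(x)|\) and divisibility indicators \(\mathbf 1[v(P(x)) \le v(Q(x))]\), so each basic open \([\,r < \varphi < s\,]\) is a finite Boolean combination of sets \(\{\rD(P(x),Q(x)) = 0,\ |P(x)| > 0\}\) and \([\,r' < |P(x)| < s'\,]\); these correspond under \(p \mapsto w_p\) (density of \(|A^\times|\) handling the radii) to finite Boolean combinations of rational subsets and their complements, i.e.\ to opens of the constructible topology. The three points together give that the type\hyp{}space topology on \(S_{\Val^{|T|}}(A)\) equals the pullback of the constructible topology associated to \(\Cont_v(A\langle T\rangle)\); in particular \(p \mapsto w_p\) is a continuous bijection onto \(\Cont_v(A\langle T\rangle)\) with its given topology, and a homeomorphism onto its constructible refinement (alternatively, invoke compactness of \(S_{\Val^{|T|}}(A)\) and Hausdorffness of the patch topology).

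I expect the surjectivity step to be the main obstacle, since it is the only place where the continuity hypothesis is genuinely needed: both to manufacture the rank\hyp{}one coarsening \(w_0\) and, more delicately, to guarantee that the (possibly higher\hyp{}rank) valuation \(w\) survives completion to the rank\hyp{}one \(w_0\)-topology, so that the resulting valued field is an \(\MVF\)-structure into which \(A\) embeds. A secondary technical point is the precise bookkeeping with the two families of generators of the constructible topology --- that complements of rational subsets, and not merely the rational subsets themselves, pull back to type\hyp{}space opens --- which, as indicated above, rests on the open\hyp{}ideal condition built into the definition of rational subsets together with the ``jump'' behaviour of \(\rD\).
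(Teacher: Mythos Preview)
Your argument is correct and follows the same overall architecture as the paper's proof: define the map by evaluation, get injectivity from quantifier elimination in \(\ACMVF\), build a model for surjectivity, and identify the constructible topology by showing quasi\hyp{}compact rational opens pull back to clopens. The execution differs in two places. For surjectivity, the paper tensors \(A\langle T\rangle\) with \(\Val(M)\) over \(A\) and uses that \(M\) is existentially closed in the resulting extension; this immediately yields an extension of \(M\), whereas your construction produces an \(\ACMVF\) model containing only \(A\) and tacitly relies on model completeness to conclude that the resulting quantifier\hyp{}free type lies in \(S_{\Val^{|T|}}(A)\). For continuity and the clopen property, the paper observes that a condition of the form \(\{w f \ge w g \le v(a)\}\) factors through a discrete quotient \(\Val/(c)\) and is therefore clopen, while you argue directly from the two\hyp{}valued behaviour of \(\rD\); both are fine, and your argument is arguably more transparent.

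One small imprecision: in your ``third'' point you claim that each basic open \([\,r<\varphi<s\,]\) corresponds to a \emph{finite} Boolean combination of rational subsets and their complements. That is not quite right for the norm conditions \([\,r'<|P|<s'\,]\): these are only \emph{unions} of such finite Boolean combinations (over \(c\in A\) with \(|c|\) approaching \(r'\) or \(s'\)), hence merely open in the constructible topology, not constructible. This does not harm the conclusion, and in any case your compactness alternative (continuous bijection from compact to Hausdorff) is the clean way to finish and is what the paper implicitly uses as well.
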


\begin{proof}
Fix any \(p\in S_{\Val^{|T|}}(A)\), then we define a valuation \(v_p\) on \(A
\langle T \rangle\) by \(v_p(f) \leq v_p(g)\) whenever \(p(T)\models D(f,g) =
0\). Since \(vA^\times\) is cofinal in the valuation group of any elementary
extension of \(M\), \(v_p\) is continuous. Injectivity follows from quantifier
elimination (\cref{ACMVF EQ}). Conversely, if \(v\) is a continuous valuation on
\(A\langle T \rangle\), then tensoring by \(\Val(M)\), modding out by the kernel
and taking the valuation ring, we obtain an extension \(R\) of \(M\). As \(M\)
is existentially closed in \(R\), the type of \(T\) over \(A\) is an then
element of \(S_{\Val^{|T|}}(A)\) whose associated valuation is \(v\) itself.

For continuity, we need to prove that the pre-image of \[\{w \in
\Cont_v(A\langle T \rangle) : w f \geq w g \neq\infty\}\] is open. Note that,
for all non-zero \(a\in A\), the set \[\{ w \in \Cont_v(A\langle T \rangle) : w
f \geq w g \leq v a \}\] is clopen, as it factorizes through
some quotient modulo \(c\in A\). Since \[\{w \in \Cont_v(A\langle T \rangle) : w
f \geq w g \neq\infty\} = \bigcup_{a\in A^\times} \{w \in \Cont_v(A\langle T
\rangle) : w f \geq wg \leq v a \},\] this set is open. 

To prove the last statement, we have to prove that the open quasi-compact
subsets of \(\Cont_v(A\langle T\rangle)\) --- i.e. the sets \(\{w f_i \geq w g
\neq\infty \mid i<n\}\) where \(\fm\subseteq (f_i\mid i<n)\) --- is clopen. But
then some linear combination of the \(f_i\) is a non-zero constant and hence
\(\{w f_i \geq w g \neq\infty \mid i<n\} = \{w f_i \geq w g \leq v a \mid
i<n\}\) for some $a\in A^\times$, which is indeed clopen.
\end{proof}

The continuous bijection above is compatible with the projection to
\(\Spec(A\langle T\rangle)\). Thus, for every ideal \(I\subseteq A\langle
T\rangle\), it induces a continuous bijection \[S_{V}(A) \to \Cont_v(A\langle
T\rangle/I),\] where \(V\subseteq \Val^{|T|}\) is the zero set of all \(f\in
I\). It also naturally induces a continuous bijection on affinoid subspaces.

Note that this continuous bijection identifies the space \(S_{\Omega^n}(A)\) of
\cref{homeo types tilt} with the constructible space associated to
\(\Cont_v(A\langle T^{p^{-\infty}}\rangle)\).

\begin{rem}
\label{rem:correct-topology}
Recovering the right topology on type spaces seems to be a matter of working in
positive continuous logic; but the exact setup for this is not entirely obvious
to the authors.
\end{rem}

\bibliographystyle{siam}
\bibliography{tilteq}

\end{document}